\theoremstyle{plain}
\newtheorem{theorem}{Theorem}[section]
\newtheorem{lemma}[theorem]{Lemma}
\newtheorem{proposition}[theorem]{Proposition}
\newtheorem{remark}[theorem]{Remark}
\theoremstyle{definition}
\newtheorem{definition}[theorem]{Definition}
\theoremstyle{remark}
\DeclareMathOperator{\Fl}{Fl}
\DeclareMathOperator{\picr}{Pic}
\DeclareMathOperator{\Fla}{Fl}
\DeclareMathOperator{\cota}{T^*}
\DeclareMathOperator{\id}{Id}
\def\kgc_{K^*_G(\mathbb{C}^n)}
\def\kgchi_{K^*_\chi(\mathbb{C}^n)}
\def\kgcf_{K_G(\mathbb{C}^n)}
\def\kgchif_{K_\chi(\mathbb{C}^n)}
\def\gpic_{G\text{-}\picr}
\def\gcl_{G\text{-}\cl}
\def\trch_{{\chi_{0}}}
\def\genpx_{{p_X}}
\def\genpy_{{p_Y}}
\def\genpcn_{p_{\mathbb{C}^n}}
\def\C{{\mathcal{C}}}
\def\degzero{{\text{deg.0}}}
\def\degminusone{{\text{deg.-$1$}}}
\def\crossn{\bar}
\def\brTaaaCa{{T_{\crossn{1}\crossn{1}1}^{111}}}
\def\brTaaaCb{{T_{1\crossn{1}\crossn{1}}^{111}}}
\def\brDaaaCa{{D_{\crossn{1}\crossn{1}1}^{111}}}
\def\brDaaaCb{{D_{1\crossn{1}\crossn{1}}^{111}}}
\DeclareMathOperator{\cl}{Cl}
\DeclareMathOperator{\spec}{Spec\;}
\DeclareMathOperator{\trace}{tr}
\DeclareMathOperator{\action}{act}
\DeclareMathOperator{\rder}{\bf R}
\DeclareMathOperator{\cone}{Cone}
\DeclareMathOperator{\braidgp}{Br}
\DeclareMathOperator{\gbrcat}{\mathcal{G}\mathcal{B}\emph{r}}
\DeclareMathOperator{\cat}{\mathcal{C}\emph{a}\emph{t}}
\let\amsamp=&
\def\l@subsection{\@tocline{1}{0,2pt}{2pc}{8mm}{\ \ }} 
\def\l@section{\@tocline{1}{0,2pt}{2pc}{8mm}{\ \ }} 
\author{Lorenzo De Biase}
\address{ENEA Centro ricerche Bologna\\ 40129, Italy}
\email[L.~De Biase]{debiase.lorenzo7@gmail.com}
\subjclass[2020]{Primary 18E30, 14F05; Secondary 14M15, 18A05}
\title[Categorical actions of generalised braids for n=3]{Categorical actions of generalised braids for n=3}
\begin{document}
\begin{abstract}

Khovanov and Thomas constructed a categorical action of the braid group $\braidgp_n$ on the derived category $D(T^* \Fl_n)$ of coherent sheaves on the cotangent bundle of the variety $\Fl_n$ of the complete flags in $\mathbb{C}^n$.

In this paper, we define the generalised braid category $\gbrcat_3$, give a sufficient condition for the existence of a skein-triangulated $\gbrcat_3$ categorical action, and construct a categorical action of $\gbrcat_3$ on $D(T^*(\Fl_3(\bar{i}))$ that generalises Khovanov and Thomas categorical braid action on $D(T^* \Fl_3)$. 
\end{abstract}
\maketitle
\section{Introduction}

Braids group actions on triangulated categories have been investigated for years with many notable results.

In particular in \cite{KT}, Khovanov and Thomas constructed a categorical action of the braid group $\braidgp_n$ on the derived category $D(T^* \Fl_n)$ of coherent sheaves on the cotangent bundle of the variety $\Fl_n$ of the complete flags in $\mathbb{C}^n$.

A (weak) categorical group action of a group $G$ on $D^b(X)$, the bounded derived category of the abelian category $Coh(X)$ of coherent sheaves on a smooth quasi-projective variety $X$, is an assignment of an autoequivalence $F_g$ of $D^b(X)$ to every element $g\in G$ such that the group operation is compatible, up to isomorphism, with the composition of functors.

When $X\cong T^* \Fl_n$,  for every $i\in \{1, \dots , n \} $ the natural projection 
\[ p_i : \Fl_n \to \Fl_n (\hat{i} ) \]
 is a $\mathbb{P}^1$-bundle, where with  $\Fl_n (\hat{i} )$ we denote the flag variety where the choice of vector space of dimension $i$ is skipped. 
 
 Also the natural maps $j_i$ and $\pi_i$

\begin{equation*}\label{Horya1}
\begindc{\commdiag}[150]
\obj(1,1)[a]{$T^* \Fl_n (\hat i )$}
\obj(1,4)[b]{$p_i ^*T^* \Fl_n (\hat i )$}
\obj(6,4)[c]{$T^* \Fl_n$}
%\obj(10,4)[d]{$D(D)$}
%\obj(10,1)[e]{$D(E)$}
\mor{b}{a}{$\pi _i$}[0,8]
\mor{b}{c}{$j_{i}$}[0,6]
%\mor{a}{c}{$F$}[-1,0]
%\mor{c}{e}{$G $}[-1,0]
%\mor{c}{d}{$i_2^!$}[0,0]
%\mor{d}{e}{$\pi _{2*}$}[0,0]
\enddc
\end{equation*}
are respectively a divisorial inclusion and a canonical projection.

Define the derived functors  
\begin{equation} \label{split}
    F_i = j_{i*} \circ  p_i^*: D^b(T^* \Fl_n (\hat i )) \to D^b (T^* \Fl_n) 
\end{equation}

and denote their right adjoints by
\[ R_i = p_{i*} \circ  j_i^!: D^b(T^* \Fl_n) \to D^b (T^* \Fl_n (\hat i )). \]

Define moreover for every $i\in \{1, \dots , n \} $ the derived functors
\begin{equation}\label{twist} T_i=\mathrm{Cone}(F_i R_i\overset{ \varepsilon _i } {\longrightarrow} \id) \qquad  T_i'=\mathrm{Cone}(\id \overset{ \epsilon _i } {\longrightarrow} F_i R_i)
\end{equation}
to be the cones (taken in appropriate enhancements) of respectively the units and the counits of the adjunctions $F_i \dashv R_i$.

In this context, in \cite{KT} Khovanov and Thomas proved the following:

\begin{theorem}[\cite{KT}, Theorem 4.5]\label{KT}

The $T_i$ are autoequivalences of $\mathrm{D^b(T^* Fl}_n)$ with inverses $T_i'$, i.e.
\[ T_i' \circ T_i \cong \id \cong T_i \circ T_i' \]
Moreover the $n-1$ autoequivalences $T_i$  satisfy the braid relations:
\begin{align*}
T_i T_j \cong T_j T_i\qquad \qquad \qquad     & \text{for }|i-j|\ge 1. \\
T_i T_j T_i \cong T_j T_i T_j \quad \quad  \qquad & \text{for } |i-j|= 1.
\end{align*}
Thus, there is a categorical action of the braid group $\mathrm{Br}_n$ on $\mathrm{D^b(T^* \Fl}_n)$.
\end{theorem}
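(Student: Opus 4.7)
The plan is to recognise each $F_i$ as a $\mathbb{P}$-functor (equivalently, to establish a spherical pair structure), so that invertibility of $T_i$ follows formally, and then to establish the two braid-type relations using the explicit geometry of the correspondences; throughout, one works in a dg or $A_\infty$-enhancement so that cones are functorial and compositions are strictly well-defined, which is what the parenthetical remark in \eqref{twist} about "appropriate enhancements" refers to.

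First I would verify that $T_i$ is an autoequivalence with inverse $T_i'$. Using base change along the fiber square defining $R_i F_i$, and exploiting the fact that $p_i$ is a $\mathbb{P}^1$-bundle while $j_i$ is a divisorial inclusion with normal bundle identifiable (up to a twist by the relative canonical of $p_i$) with an explicit line bundle, one computes $R_i F_i$ in a form that satisfies the $\mathbb{P}^1$-functor axiom. Composing the two triangles defining $T_i$ and $T_i'$, the connecting map between the two middle terms becomes invertible precisely by this $\mathbb{P}^1$-functor structure, and the total composition collapses to $\id$, giving $T_i \circ T_i' \cong \id \cong T_i' \circ T_i$.

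For the commutation $T_i T_j \cong T_j T_i$ when $|i-j|\geq 2$, I would prove that the two correspondences in $T^*\Fl_n \times T^*\Fl_n$ representing $F_i R_i$ and $F_j R_j$ fit into a Tor-independent Cartesian square over $\Fl_n(\hat i, \hat j)$. Base change then produces a canonical isomorphism $F_i R_i \circ F_j R_j \cong F_j R_j \circ F_i R_i$ compatible with the units and counits, and passing to cones yields $T_i T_j \cong T_j T_i$.

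For the braid relation with $|i-j|=1$ the cleanest approach is via Fourier--Mukai kernels: each $T_i$ is represented by a cone kernel on $T^*\Fl_n \times T^*\Fl_n$ supported on the union of a correspondence $Z_i$ and the diagonal, and the iterated convolutions $T_i T_{i+1} T_i$ and $T_{i+1} T_i T_{i+1}$ yield complexes of sheaves on triple fiber products built from $\Fl_n(\widehat{i,i+1})$. The task is to exhibit an isomorphism between these two convolution kernels. This is the main obstacle: the comparison is not a purely sheaf-theoretic statement on cohomology sheaves but requires matching the higher homotopies and connecting differentials, so one must work in the dg-enhancement and construct an explicit zigzag of quasi-isomorphisms between the two iterated cones. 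This step relies decisively on the $\mathbb{P}$-functor relations from Step 1, and is the technical heart of the Khovanov--Thomas argument.
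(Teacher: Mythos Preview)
This theorem is not proved in the present paper: it is quoted verbatim from \cite{KT} as background, and no proof is given here. There is therefore no ``paper's own proof'' to compare your proposal against for the general-$n$ statement.

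That said, the paper does re-derive the $n=3$ instance of the braid relation inside the proof of Theorem~\ref{Theorem1}, and the route taken there is genuinely different from your sketch. You outline the original Khovanov--Thomas strategy: represent each $T_i$ by an explicit Fourier--Mukai kernel supported on $Z_i\cup\Delta$ and compare the iterated convolutions $T_iT_{i+1}T_i$ and $T_{i+1}T_iT_{i+1}$ by a geometric zigzag of quasi-isomorphisms on triple fiber products. The paper instead invokes an abstract criterion from \cite[Theorem~6.2]{AL2}: the braid relation follows once one exhibits an isomorphism between the convolutions of the twisted complexes \eqref{eqn-O1-definition} and \eqref{eqn-O2-definition} intertwining the maps \eqref{eqn-map-O1-direct-sum}, \eqref{eqn-map-O2-direct-sum}. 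That isomorphism is produced from the multifork isomorphism $\alpha\beta^R$ together with the skein triangles \eqref{eqn-skein-relation-no-crossing}, \eqref{eqn-skein-relation21-no-crossing} and the split-spherical structure on the $(1,1)$-forks. Your approach buys a self-contained geometric argument valid for all $n$; the paper's approach buys a purely categorical reduction that isolates exactly which structural ingredients (multifork, skein, split-spherical) force the relation, at the cost of having to verify those ingredients geometrically elsewhere (Lemma~\ref{main2}, \cite[\S7]{AL3}).

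One terminological point: you call $F_i$ a ``$\mathbb{P}$-functor'' satisfying the ``$\mathbb{P}^1$-functor axiom''. In this setting $F_i$ is a spherical functor with cotwist $[-2]$, and invertibility of $T_i$ with inverse $T_i'$ is the standard spherical-twist statement (\cite[Theorem~5.1]{AL2} is what the paper cites for this). Your Step~1 is correct in substance, but it would be cleaner to name it as such.
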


In \cite{KT} the configuration of $n$ distinct points represents the derived category of the cotangent space of complete flags in $\mathbb{C}^n$, and the cobordism between two such configurations represents an autoequivalence of this category.

In this paper, we generalise the Khovanov and Thomas result in dimension 3 to an action of the category $\gbrcat_3$ of generalised braids on the derived categories of the cotangent bundles of the varieties of complete and partial flags in $\mathbb{C}^3$. 

Braids are topological configurations of $n$ disjoint pieces of string with $n$ fixed endpoints, considered up to isotopies which keep the strands disjoint. 

Generalised braids are braids whose strands are allowed to touch in a certain way: they can join up, continue as a multiple strand, and split apart.

We follow the general approach for $\gbrcat_n$ given in \cite{AL4}, i.e. we do not distinguish any permutations of individual strands within a multiple strand --- only the multiplicity matters. In \cite{AL4} generalised braids are defined by taking trivalent coloured graphs with fixed univalent startpoints and endpoints and satisfying flow conditions,  and ribbon-embedding these graphs into the three-dimensional space. They are then considered up to isotopies. However, in this work, since we only work with $n=3$, we define $\gbrcat_3$ directly through generating diagrams and the relations between them which we call generalised braid relations. This also ensures that this paper is self-contained and independent of \cite{AL4}. 

Due to strands having multiplicity, instead of a single endpoint configuration consisting of $3$ disjoint points, they have multiple endpoint configurations corresponding to the ordered partitions of $3$. This, together with the fact that such braids are no longer necessarily invertible, implies that generalised braids form a category rather than a group or a groupoid; the objects are the partitions of $3$ and the morphisms are the generalised braids connecting them.

Our main interest lies in categorical representations of $\gbrcat_3$  i.e. we want to assign a category to each object of $\gbrcat_3$ and a functor to each morphism of  $\gbrcat_3$, in a way that is compatible with the generalised braid relations. Moreover, we are interested in a special class of such representations defined in \cite{AL4} which are called \em skein-triangulated\rm. In these, the functors representing the braids satisfy certain triangulated-categorical relations which generalise the famous skein relation used in the construction of Jones polynomial and its categorifications (\cite{Jon} and \cite{Kho}).

\subsection*{Main results}
The results of this paper are developed following the conjecture made by Anno and Logvinenko more than a decade ago that the categorical braid group action constructed by Khovanov and Thomas in \cite{KT} can be extended to a skein-triangulated representation of the category of generalised braids. Roughly, the idea is that the functors $F_i$ in \cite{KT}  correspond to $1$-$1$-forks: a splitting of a multiplicity two strand into two multiplicity one strands. Their right adjoints $R_i$ correspond to $1$-$1$-merge: a joining of two multiplicity one strands into a multiplicity two strand. The spherical twist construction in \cite{KT} which defines the crossing functor $T_i$ to be the cone of the adjunction counit $F_i R_i \rightarrow \id$ ensures that the original skein relation holds between $F_i$, $R_i$, and $T_i$. The skein-triangulatedness of a representation of $\gbrcat_n$ implies, in particular, that it can be completely generated from its fork functors using the original skein relation and its higher analogues.  

In this paper, we prove the above conjecture for the case $n = 3$ and construct a (weak) categorical action of generalised braid category $\gbrcat_3$ on the derived categories $D(T^* \Fl_3(\hat{i}))$ of the cotangent bundles of the varieties of full and partial flags in $\mathbb{C}^3$. 

Our first result is to provide, in a general context, the list of conditions on the fork functors sufficient to generate a categorical action of $\gbrcat_3$. These conditions are easier to verify than the generalised braid relations. They give a special class of skein-triangulated representations which we call the split $\mathbb{P}^2$-representations. 

Our first result can be summarised as follows:

\begin{theorem}
Any assignment of exact functors between enhanced triangulated categories 
to the fork morphisms of $\gbrcat_3$ which satisfies our list of the split $\mathbb{P}^2$-conditions induces a (split $\mathbb{P}^2$) categorical representation of $\gbrcat_3$.
\end{theorem}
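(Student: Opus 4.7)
The plan is to work from a presentation of $\gbrcat_3$ in terms of its generating diagrams (forks, merges, simple crossings, and the various crossings involving strands of multiplicity $2$) together with the generalised braid relations between them. Starting from the given assignment of exact functors $F_i$ to the fork morphisms, I would first use the enhancement to extract right adjoints $R_i$ and take these to represent the merge morphisms. Next, mimicking Khovanov--Thomas, I would define the functors associated to the simple crossings as the cones $T_i = \mathrm{Cone}(F_i R_i \to \id)$ and $T_i' = \mathrm{Cone}(\id \to F_i R_i)$, and set up analogous cones for the higher crossings involving multiplicity-$2$ strands; the split $\mathbb{P}^2$-conditions are tailored so that all of these cones are well defined and functorial at the dg level.

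With the functors assigned to each generator, the remaining task is to verify that each generating relation of $\gbrcat_3$ holds up to isomorphism in the homotopy category of dg functors. I would organise these into three families. \textbf{(a)} The classical braid relations $T_i T_j \cong T_j T_i$ for distant $i,j$, $T_i T_j T_i \cong T_j T_i T_j$ for adjacent $i,j$, and the invertibility $T_i' \circ T_i \cong \id \cong T_i \circ T_i'$; these I expect to follow by a formal spherical-twist argument analogous to Theorem \ref{KT}, now using the split $\mathbb{P}^2$-conditions in place of the sphericity hypothesis. \textbf{(b)} The pure fork/merge relations, expressing how forks and merges compose and decompose; these reduce to identities in the adjoint functor calculus that are forced directly by the split $\mathbb{P}^2$-conditions on the compositions $R_i F_i$, $F_i R_i$, and their mixed variants. \textbf{(c)} The interaction relations between crossings and fork/merge diagrams, which encode how a crossing behaves when pulled through a fork or merge; these must be proven by careful cone-chasing, relying on functoriality of $\mathrm{Cone}$ and the octahedral axiom inside the chosen enhancement.

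The main obstacle I anticipate lies in family \textbf{(c)}, and more specifically in the relations that force $T_i$ to behave like a $\mathbb{P}^2$-twist rather than a spherical twist once the surrounding strand configuration has a multiplicity-$2$ strand. The split $\mathbb{P}^2$-conditions should ensure that $R_i F_i$ decomposes as a controlled sum of shifts of the identity (the ``split'' piece of a $\mathbb{P}^2$-object structure), and that the structural maps between these summands satisfy precise compatibilities; from this one then recovers each interaction relation by an explicit cone calculation. Once these relations are in place, skein-triangulatedness of the resulting representation is immediate from the very cone definitions used to construct the crossing functors, and so the theorem follows. The delicate point throughout is to show that the list of split $\mathbb{P}^2$-conditions is not only necessary but \emph{sufficient} for every generalised braid relation to be realised categorically: the proof will effectively consist of checking that each such relation is already encoded in one of the conditions on the list, modulo formal cone manipulations.
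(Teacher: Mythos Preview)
Your high-level organisation is in the right direction, but there are two genuine gaps where the proposal does not identify the mechanism that actually makes the argument go through.

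First, the braid relation. Saying it ``follows by a formal spherical-twist argument analogous to Theorem~\ref{KT}'' is not enough: the Khovanov--Thomas proof is a Fourier--Mukai kernel computation on flag varieties, and here you need a purely formal substitute. The paper does this by invoking a criterion (Theorem~6.2 of \cite{AL2}) which reduces the braid relation for spherical twists to exhibiting an isomorphism between two specific twisted complexes that intertwines two specific maps into $F_{21}^{111}R_{111}^{21}F_{12}^{111}R_{111}^{12}\oplus F_{12}^{111}R_{111}^{12}F_{21}^{111}R_{111}^{21}$. The required isomorphism is produced from the multifork isomorphism $F_{21}^{111}F_3^{21}\simeq F_{12}^{111}F_3^{12}$, and the skein-triangle conditions \eqref{eqn-skein-relation-no-crossing}, \eqref{eqn-skein-relation21-no-crossing} are exactly what is needed to show that the multifork-induced map lands in the right place and intertwines correctly. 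Your plan does not mention the skein triangles at all, yet they are the engine of this step.

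Second, you misplace where the $\mathbb{P}^2$-structure enters. The $(1,1)$-crossings $\brTaaaCa,\brTaaaCb$ are ordinary spherical twists, not $\mathbb{P}^2$-twists; their invertibility is immediate from sphericity of $F_{12}^{111},F_{21}^{111}$. The $\mathbb{P}^2$-condition is used for the \emph{inverse relation for $(1,2)$- and $(2,1)$-crossings}: the paper proves that $T_{21}^{12}T_{12}^{21}$ is the $\mathbb{P}$-twist of $F_3^{12}$ (``flop$+$flop$=$twist''), hence an autoequivalence, and then deduces invertibility of $T_{12}^{21}$ and $T_{21}^{12}$. This requires an explicit twisted-complex computation in which the split decompositions $R_{21}^3F_3^{21}\simeq\id\oplus[-2]\oplus[-4]$ and $R_{111}^{12}F_{12}^{111}\simeq\id\oplus[-2]$, together with condition~\eqref{item-fl-3-condition}, are used to rewrite the tensor product of the two cone definitions and then cancel acyclic subcomplexes via the Replacement Lemma. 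Your family~(a) treats all invertibility relations as if they were of the same type; they are not, and the $(1,2)/(2,1)$ case is the hardest computation in the proof. (A smaller point: the multifork relation is an \emph{assumption} built into the conditions, not something derivable from adjoint calculus as your family~(b) suggests.)
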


In the second part of the paper, we apply this in the context of the derived categories of the cotangent bundles of the full flag space $Fl_3$ and the partial flag spaces $\mathbb{P}^2$, $\mathbb{P}^{2\vee}$, and $\spec \mathbb{C}$. Here, the $F_i$ are either split spherical functors with cotwist $[-2]$ or $\mathbb{P}^2$ functors with $H=[-2]$. They are defined as Fourier–Mukai transforms. We verify that they satisfy the above split $\mathbb{P}^2$ conditions, thus proving the following theorem:

\begin{theorem}
Functors $F_i$ of (\ref{split}) generate a split-$\mathbb{P}^2$ skein-triangulated representation of generalised braid category $\gbrcat_3$ on $D(T^* \Fl_3(\hat{i}))$.
\end{theorem}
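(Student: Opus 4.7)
The plan is to reduce the theorem to the first main theorem of the paper: it suffices to produce a complete system of fork functors indexed by the ordered partitions of $3$ and verify that they satisfy the split $\mathbb{P}^2$-conditions. Concretely, for each partition of $3$ I will assign the derived category of the cotangent bundle of the corresponding partial flag variety: to $(1,1,1)$ the category $D(T^*\Fl_3)$, to $(2,1)$ and $(1,2)$ the categories $D(T^*\mathbb{P}^{2\vee})$ and $D(T^*\mathbb{P}^2)$ (these are the cases $\Fl_3(\hat 1)$ and $\Fl_3(\hat 2)$), and to $(3)$ the category $D(\spec\mathbb{C})$. The fork functors are then the functors of type $F_i=j_{i*}\circ p_i^*$ built from the $\mathbb{P}^1$-bundle projections $p_i$ and the divisorial inclusions $j_i$ between the relevant cotangent bundles, together with their analogues for the projections to $\spec\mathbb{C}$, which arise from $\mathbb{P}^2$- and $\mathbb{P}^{2\vee}$-bundle structures.

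The first step is to write each fork functor as a Fourier--Mukai transform whose kernel is the structure sheaf of the incidence locus in the fibre product, twisted appropriately by the conormal data. Standard projective-bundle computations then give two kinds of local behaviour: the fork functors between $D(T^*\Fl_3)$ and the $D(T^*\Fl_3(\hat i))$ are split spherical with cotwist $[-2]$ (this is precisely the content of \cite{KT} for these $F_i$), while the fork functors between $D(T^*\mathbb{P}^2)$ (resp. $D(T^*\mathbb{P}^{2\vee})$) and $D(\spec\mathbb{C})$ are $\mathbb{P}^2$-functors with $H=[-2]$, because the fibres of the relevant projections are $\mathbb{P}^2$ with the Euler class of the cotangent bundle providing the required degree $2$ Hochschild element. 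The splitness is guaranteed by the existence of a canonical retraction coming from the natural transformation $R_iF_i\to\id[2]$ obtained by Koszul resolving the diagonal along the fibres of the $\mathbb{P}^1$- or $\mathbb{P}^2$-bundle.

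The second step is to verify the interaction conditions from the split $\mathbb{P}^2$-conditions list. These are of two sorts. First, the distant commutativity conditions, asserting that fork functors associated to disjoint pairs of strands commute up to canonical isomorphism; these reduce, via base change in the associated commutative squares of fibre bundles, to the flatness of the relevant projections and hence hold formally. Second, the adjacency conditions, asserting compatibilities between fork functors whose strands share an endpoint; these boil down to kernel-level identities on the triple fibre products such as $T^*\Fl_3\times_{T^*\mathbb{P}^2}T^*\Fl_3$ and its analogues, which can be analysed by explicit local coordinates on these nested incidence varieties.

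The main obstacle, and the longest part of the verification, will be the adjacency conditions in the presence of a multiplicity-$3$ strand, i.e.\ the compatibilities that involve the functors landing in or departing from $D(\spec\mathbb{C})$. Here one must compare Fourier--Mukai kernels living on $T^*\Fl_3\times T^*\Fl_3$ obtained by composing through $T^*\mathbb{P}^2$, through $T^*\mathbb{P}^{2\vee}$, and through $\spec\mathbb{C}$, and realise the difference as a cone involving a $\mathbb{P}^2$-cotwist summand of the form $[-2]\oplus[-4]$. This is where the $\mathbb{P}^2$-structure (rather than the ordinary spherical structure) is essential, and where the bulk of the computation lies; once these kernel identifications are checked, the first main theorem applies verbatim and produces the desired skein-triangulated split-$\mathbb{P}^2$ representation of $\gbrcat_3$.
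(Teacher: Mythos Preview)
Your overall strategy --- assign the four derived categories, build the fork functors as Fourier--Mukai transforms, and then invoke the first main theorem --- is exactly what the paper does. You also correctly identify that the $(1,1)$-forks are split spherical with cotwist $[-2]$ and that the forks to $D(\spec\mathbb{C})$ are split $\mathbb{P}^2$-functors with $H=[-2]$.

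The gap is in your second step: you have not correctly read off which conditions actually need to be checked. There are no ``distant commutativity conditions'' for $n=3$ at all --- with three strands every pair of generators is adjacent, and the list of split-$\mathbb{P}^2$ conditions in Definition~\ref{defn-split-p^2-representation-of-gbr-3} contains nothing of that sort. What remains after the spherical/$\mathbb{P}^2$ verifications are three specific items: the multifork isomorphism $F_{21}^{111}F_3^{21}\simeq F_{12}^{111}F_3^{12}$ (both sides send the generator to $\mathcal{O}_{\Fl_3}$); the $\Fl_3$-condition \eqref{item-fl-3-condition}, which is a statement in $D(\spec\mathbb{C})$ about $R_{12}^3R_{111}^{12}F_{12}^{111}F_3^{12}$ and is proved by identifying it with $H^*(\Fl_3,k)$ via Hodge theory and formality of $I^!I_*$; and the skein triangles \eqref{eqn-skein-relation-no-crossing}, \eqref{eqn-skein-relation21-no-crossing}.

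Your description of the ``main obstacle'' misplaces the skein computation: the triangle lives on $T^*\mathbb{P}^2\times T^*\mathbb{P}^2$ (the kernels are in $D(A\times A)$), not on $T^*\Fl_3\times T^*\Fl_3$, and its third term is $\id_{12}[-2]$, not a cone involving $[-2]\oplus[-4]$. The actual geometric content is that the Fourier--Mukai kernel of $R_{111}^{12}F_{21}^{111}R_{111}^{21}F_{12}^{111}$ is supported on $(B\cap D)\times_E(B\cap D)$, whose image in $A\times A$ has two irreducible components --- the diagonal and the blow-up of $\mathbb{P}^2\times\mathbb{P}^2$ along its diagonal --- and the short exact sequence relating these yields the required triangle with the kernel $\mathcal{O}_{\mathbb{P}^2\times\mathbb{P}^2}$ of $F_3^{12}R_{12}^3$. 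Until you locate and verify these three concrete conditions, the reduction to the first main theorem is not complete.
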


\subsection*{Further directions}

The most direct follow-up of this paper would consist of explicit computations establishing the conjecture in dimensions 4, and possibly 5.
Ultimately, the aim is to establish the Anno-Logvinenko conjecture for arbitrary dimension: the generalised braid category $\gbrcat_n$ has a skein-triangulated representation on the varieties of complete and partial flags in $\mathbb{C}^n$. Such representation would comprise 
a network of functors between the derived categories $D(T^* \Fl_n(\hat{i}))$, some of which are well-known in geometric representation theory: the Khovanov-Thomas braid group action \cite{KT} is a subset of the endofunctors of a single node in this network. The Cautis-Kamnitzer-Licata categorical $\mathfrak{sl}_2(\mathbb{C})$-action \cite{CautisKamnitzerLicata-DerivedEquivalencesForCotangentBundlesofGrassmanniansViaStrongcategoricalSL2Actions} is a subset of the functors which exist in this network between the Grassmanians, which are just some of its nodes. 

Moreover, data of a skein-triangulated action of generalised braids on flags in $\mathbb{C}^n$ is expected to be closely related to the data of a perverse schober (perverse sheaf of triangulated categories, \cite{KS}) on the orbifold space  $[\mathfrak{h}/W]$ where $\mathfrak{h}$ is a Cartan algebra of the Lie algebra $\mathfrak{s}\mathfrak{l}_n$ and $W$ is the corresponding Weil group. Understanding the link between the two might open new ways to understand, establish, and generalise such categorical actions.

\subsection*{Plan of the paper}

In Section \ref{gen_bra_cat_sec}, we define the generalised braid category $\gbrcat_3$ in terms of its generators and their respective relations.

In Section \ref{cat_act_sec}, we recall the notion of a categorical action and prove a theorem which gives the sufficient conditions for fork generator functors to generate a categorical action of $\gbrcat_3$ which we call a split-$\mathbb{P}^2$ action. 

In Section \ref{flag_act_sec} we introduce the setup for a generalised braid action of $\gbrcat_3$, while in Section \ref{cat_act_flag_gen_sec} we describe the merge and the fork functors and compute their compositions at level of Fourier-Mukai kernels. 

Finally, in Section \ref{cat_act_flag_final_sec} we prove that the assignments of the two previous sections satisfy the conditions of Section \ref{cat_act_sec} and therefore they give a categorical action of $\gbrcat_3$ on $D(T^*(\Fl_3(\bar{i}))$.

\subsection*{Acknowledgements}

The author is indebted to Timothy Logvinenko, Rina Anno, Ed Segal and Enrico Fatighenti for many enlightening suggestions. Thanks to Marcello Bernardara, Chris Seaman, Alice Cuzzucoli, Claudio Onorati and Aurelio Carlucci for useful discussions.  
A big part of this work was supported by an EPSRC PhD Funding grant held at the School of Mathematics of Cardiff University. The author is a member of INdAM-GNSAGA.

\section{Generalised braid category}
\label{gen_bra_cat_sec}
Intuitively, generalised braids should be thought of as braids where we remove the restriction that 
the strands are not allowed to touch each other. They can now come together, continue as a strand with multiplicity, and then 
possibly split apart: 
\begin{center}

\begin{tikzpicture}
\braid [number of strands=3,
style strands={1,3}{opacity=0}, height=2 cm, width=0.9cm]at (2.1,0) a_1^{-1} ;
\braid [number of strands=5,
style strands={2,4,5}{opacity=0}, height=2 cm, width=1 cm]at (2,0)  a_4;

\fill[black] (2,0) circle (2pt);
\fill[black] (2,-2.5) circle (2pt);
\fill[black] (2.1,-2.5) circle (2pt);
\fill[black] (4,0) circle (2pt);
\fill[black] (4,-2.5) circle (2pt);
\fill[black] (3,0) circle (2pt);

\end{tikzpicture}
\begin{tikzpicture}
\braid [number of strands=3,
style strands={2,3}{opacity=0}, height=2 cm, width=0.9cm]at (2.1,0) a_1 ;
\braid [number of strands=5,
style strands={2,4,5}{opacity=0}, height=2 cm, width=1 cm]at (2,0)  a_4;

\fill[black] (2,0) circle (2pt);
\fill[black] (2.1,0) circle (2pt);
\fill[black] (2,-2.5) circle (2pt);
\fill[black] (4,0) circle (2pt);
\fill[black] (4,-2.5) circle (2pt);
\fill[black] (3,-2.5) circle (2pt);

\end{tikzpicture}
\begin{tikzpicture}
\braid [number of strands=2,
style strands={3}{opacity=0}, height=2 cm, width=2 cm]at (6,-2)  a_1;
\braid [number of strands=2,
style strands={2}{opacity=0}, height=2 cm, width=2 cm]at (6.1,-2)  a_1;
\fill[black] (6,-2) circle (2pt);
\fill[black] (6.1,-2) circle (2pt);
\fill[black] (8,-2) circle (2pt);
\fill[black] (6,-4.5) circle (2pt);
\fill[black] (8.1,-4.5) circle (2pt);
\fill[black] (8,-4.5) circle (2pt);

\end{tikzpicture}
\end{center}

Any two strands with multiplicities p and q can join up and continue as a strand with multiplicity $p + q$. Any strand with multiplicity $p + q$ can split up into two strands with multiplicities $p$ and $q$. Instead of a single configuration of disjoint endpoints, we have multiple configurations indexed by ordered partitions.

We want to consider the generalised braids up to isotopies which preserve the intervals on which strands come together; they can make such interval shorter or longer, but can't make it vanish completely or join two such intervals into one. Also we do not want to distinguish individual strands within a strand with multiplicity. This approach is formalised in \cite{AL4}, where generalised braids are defined as framed $\mathbb{R}^3$-embedded trivalent coloured graphs with univalent endpoints and are considered up to isotopies of the ambient space and multifork/multimerge relations. The generalised braid category $\gbrcat_n$ is then defined accordingly. 

In this paper, to make it self-contained, we define $\gbrcat_3$ directly in terms of generating diagrams and the relations between them:

\begin{definition}
The \emph{generalised braid category} $\gbrcat_3$ is the category with:\\
\underline{Objects}: ordered partitions of $3$:
$$ \left\{ \bar{i} = (i_1 \ldots i_k) \quad \middle\vert \quad \sum_{s=1}^k \bar{i}_s =3 \right\}. $$
\underline{Morphisms}: The morphisms in $\gbrcat_3$ are called the \emph{generalised braids}. They are generated by the following set of the generating diagrams, subject to the relations further listed below:

\begin{enumerate}
    \item Four forks:
    \begin{figure}[!h] \centering 
     
            \begin{tikzpicture}
 \braid [number of strands=3,
style strands={1,3}{opacity=0}, height=1 cm, width=0.9cm]at (0.1,0) a_1^{-1};
\braid [number of strands=3,
style strands={1,3}{opacity=0}, height=1 cm, width=0.9cm]at (0.2,0) a_1^{-1};
\braid [number of strands=5,
style strands={2,3,4,5}{opacity=0}, height=1 cm, width=1 cm]at (0,0)  a_4;
\fill[black] (0,0) circle (2pt);
\fill[black] (0.2,-1.5) circle (2pt);
\fill[black] (0.1,-1.5) circle (2pt);
\fill[black] (0,-1.5) circle (2pt);
\fill[black] (1,0) circle (2pt);
\fill[black] (1.1,0) circle (2pt);

\fill[white] (-1,0) circle (2pt);
\fill[white] (2,0) circle (2pt);
\draw  node at (0.5,-2)    {$f_3^{12} $};

\braid [number of strands=3,
style strands={1,3}{opacity=0}, height=1 cm, width=0.9cm]at (1.6,0) a_2;
\braid [number of strands=3,
style strands={1,3}{opacity=0}, height=1 cm, width=0.9cm]at (1.5,0) a_2;
\braid [number of strands=5,
style strands={2,1,4,5}{opacity=0}, height=1 cm, width=1 cm]at (1.5,0)  a_4;
\fill[black] (3.5,0) circle (2pt);
\fill[black] (3.3,-1.5) circle (2pt);
\fill[black] (3.4,-1.5) circle (2pt);
\fill[black] (3.5,-1.5) circle (2pt);
\fill[black] (2.5,0) circle (2pt);
\fill[black] (2.4,0) circle (2pt);

\draw  node at (3,-2)    {$f_3^{21} $};

\braid [number of strands=3,
style strands={1,3}{opacity=0}, height=1 cm, width=1cm]at (4.9,0) a_2;
\braid [number of strands=5,
style strands={2,4,5}{opacity=0}, height=1 cm, width=1 cm]at (5,-0)  a_4;

\fill[black] (5,0) circle (2pt);
\fill[black] (7,0) circle (2pt);
\fill[black] (5,-1.5) circle (2pt);
\fill[black] (5.9,0) circle (2pt);
\fill[black] (6.9,-1.5) circle (2pt);
\fill[black] (7,-1.5) circle (2pt);

\draw  node at (6,-2)    {$f_{12}^{111} $};

  \braid [number of strands=3,
style strands={1,3}{opacity=0}, height=1 cm, width=0.9cm]at (8.6,0) a_1^{-1};
\braid [number of strands=5,
style strands={2,4,5}{opacity=0}, height=1 cm, width=1 cm]at (8.5,0)  a_4;

\fill[black] (8.5,0) circle (2pt);
\fill[black] (10.5,0) circle (2pt);
\fill[black] (8.6,-1.5) circle (2pt);
\fill[black] (8.5,-1.5) circle (2pt);
\fill[black] (9.5,0) circle (2pt);
\fill[black] (10.5,-1.5) circle (2pt);

\draw  node at (9.5,-2)    {$f_{21}^{111} $};
            \end{tikzpicture}

    \caption{Forks} \label{gbr3-forks}
    \end{figure}
    
    \item Four merges:
    \begin{figure}[!h] \centering 
        
\begin{tikzpicture}
\braid [number of strands=3,
style strands={2,3}{opacity=0}, height=1 cm, width=0.9cm]at (0.1,0) a_1;
\braid [number of strands=3,
style strands={2,3}{opacity=0}, height=1 cm, width=0.9cm]at (0.2,0) a_1;
\braid [number of strands=5,
style strands={2,3,4,5}{opacity=0}, height=1 cm, width=1 cm]at (0,0)  a_4;

\fill[white] (-1,0) circle (2pt);

\fill[black] (0,0) circle (2pt);
\fill[black] (0.1,0) circle (2pt);
\fill[black] (0.2,0) circle (2pt);
\fill[black] (0,-1.5) circle (2pt);
\fill[black] (1,-1.5) circle (2pt);
\fill[black] (1.1,-1.5) circle (2pt);

\draw  node at (0.5,-2)    {$g_{12}^{3} $};

\braid [number of strands=3,
style strands={1,2}{opacity=0}, height=1 cm, width=0.9cm]at (1.6,0) a_2^{-1};
\braid [number of strands=3,
style strands={1,2}{opacity=0}, height=1 cm, width=0.9cm]at (1.5,0) a_2^{-1};
\braid [number of strands=5,
style strands={1,2,4,5}{opacity=0}, height=1 cm, width=1 cm]at (1.5,0)  a_4^{-1};

\fill[black] (3.3,0) circle (2pt);
\fill[black] (3.4,0) circle (2pt);
\fill[black] (3.5,0) circle (2pt);

\fill[black] (2.4,-1.5) circle (2pt);
\fill[black] (2.5,-1.5) circle (2pt);
\fill[black] (3.5,-1.5) circle (2pt);

\draw  node at (3,-2)    {$g_{21}^3 $};
\braid [number of strands=3,
style strands={1,2}{opacity=0}, height=1 cm, width=0.9cm]at (5.1,0) a_2^{-1};
\braid [number of strands=5,
style strands={2,4,5}{opacity=0}, height=1 cm, width=1 cm]at (5,0)  a_4^{-1};

\fill[black] (5,0) circle (2pt);
\fill[black] (6.9,0) circle (2pt);
\fill[black] (7,0) circle (2pt);

\fill[black] (5,-1.5) circle (2pt);
\fill[black] (6,-1.5) circle (2pt);
\fill[black] (7,-1.5) circle (2pt);

\draw  node at (6,-2)    {$g_{111}^{12} $};

\braid [number of strands=3,
style strands={2,3}{opacity=0}, height=1 cm, width=0.9cm]at (8.6,0) a_1;
\braid [number of strands=5,
style strands={2,4,5}{opacity=0}, height=1 cm, width=1 cm]at (8.5,0)  a_4;

\fill[black] (8.5,0) circle (2pt);
\fill[black] (8.6,0) circle (2pt);
\fill[black] (10.5,0) circle (2pt);
\fill[black] (8.5,-1.5) circle (2pt);
\fill[black] (9.5,-1.5) circle (2pt);
\fill[black] (10.5,-1.5) circle (2pt);

\draw  node at (9.5,-2)    {$g_{111}^{21} $};
            \end{tikzpicture}

    \caption{Merges} \label{gbr3-merges}
    \end{figure}
    
    \item Two positive and two negatives $(1,1)$-crossings:
    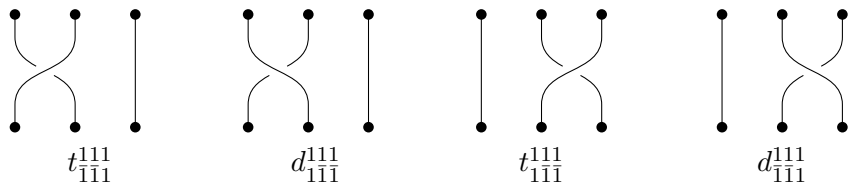
\begin{figure}[!h] \centering 
       
            \begin{tikzpicture}
\braid [number of strands=3, height=1 cm, width=0.8cm]at (0,0) a_1^{-1};

\fill[black] (0,0) circle (2pt);
\fill[black] (0.8,0) circle (2pt);
\fill[black] (1.6,0) circle (2pt);
\fill[black] (0,-1.5) circle (2pt);
\fill[black] (0.8,-1.5) circle (2pt);
\fill[black] (1.6,-1.5) circle (2pt);

\draw  node at (1,-2)    {$t_{\crossn{1}\crossn{1}1}^{111}$ };

\braid [number of strands=3, height=1 cm, width=0.8cm]at (3.1,0) a_1;

\fill[black] (3.1,0) circle (2pt);
\fill[black] (3.9,0) circle (2pt);
\fill[black] (4.7,0) circle (2pt);
\fill[black] (3.1,-1.5) circle (2pt);
\fill[black] (3.9,-1.5) circle (2pt);
\fill[black] (4.7,-1.5) circle (2pt);

\draw  node at (4,-2)    {$d_{1\crossn{1}\crossn{1}}^{111}$ };

\braid [number of strands=3, height=1 cm, width=0.8cm]at (6.2,0) a_2^{-1};

\fill[black] (6.2,0) circle (2pt);
\fill[black] (7,0) circle (2pt);
\fill[black] (7.8,0) circle (2pt);
\fill[black] (6.2,-1.5) circle (2pt);
\fill[black] (7,-1.5) circle (2pt);
\fill[black] (7.8,-1.5) circle (2pt);

\draw  node at (7,-2)    {$t_{1\crossn{1}\crossn{1}}^{111}$ };

\braid [number of strands=3, height=1 cm, width=0.8cm]at (9.4,0) a_2;

\fill[black] (9.4,0) circle (2pt);
\fill[black] (10.2,0) circle (2pt);
\fill[black] (11,0) circle (2pt);
\fill[black] (9.4,-1.5) circle (2pt);
\fill[black] (10.2,-1.5) circle (2pt);
\fill[black] (11,-1.5) circle (2pt);

\draw  node at (10.2,-2)    {${d_{\crossn{1}\crossn{1}1}^{111}}$ };

\fill[white] (12,0) circle (2pt);

            \end{tikzpicture}

    \caption{$(1,1)$-crossings} \label{gbr3-11crossings}
    \end{figure}
    
    \item Two positive and two negatives $(2,1)$- and $(1,2)$-crossings:
    \begin{figure}[!h] \centering 
        
            \begin{tikzpicture}
\braid [number of strands=2,
style strands={3}{opacity=0}, height=1.5 cm, width=2 cm]at (0,0)  a_1^{-1};
\braid [number of strands=2,
style strands={2}{opacity=0}, height=1.5 cm, width=2 cm]at (0.1,0)  a_1^{-1};
\fill[black] (0.1,0) circle (2pt);
\fill[black] (0,0) circle (2pt);
\fill[black] (2,0) circle (2pt);
\fill[black] (0,-2) circle (2pt);
\fill[black] (2.1,-2) circle (2pt);
\fill[black] (2,-2) circle (2pt);

\draw  node at (1,-2.5)    {$t^{21}_{12}$ };

\braid [number of strands=2,
style strands={1}{opacity=0}, height=1.5 cm, width=2 cm]at (3,0)  a_1^{-1};
\braid [number of strands=2,
style strands={1}{opacity=0}, height=1.5 cm, width=2 cm]at (3.1,0)  a_1^{-1};
\braid [number of strands=2,
style strands={2}{opacity=0}, height=1.5 cm, width=2.15 cm]at (3,0)  a_1^{-1};
\fill[black] (3,-2) circle (2pt);
\fill[black] (3.1,-2) circle (2pt);
\fill[black] (5.15,-2) circle (2pt);
\fill[black] (3,0) circle (2pt);
\fill[black] (5.1,0) circle (2pt);
\fill[black] (5,0) circle (2pt);
\draw  node at (4,-2.5)    {$t^{12}_{21}$ };

\braid [number of strands=2,
style strands={3}{opacity=0}, height=1.5 cm, width=2 cm]at (6,0)  a_1;
\braid [number of strands=2,
style strands={2}{opacity=0}, height=1.5 cm, width=2 cm]at (6.1,0)  a_1;
\fill[black] (6.1,0) circle (2pt);
\fill[black] (6,0) circle (2pt);
\fill[black] (8,0) circle (2pt);
\fill[black] (6,-2) circle (2pt);
\fill[black] (8.1,-2) circle (2pt);
\fill[black] (8,-2) circle (2pt);

\draw  node at (7,-2.5)    {$d_{12}^{21}$ };
\braid [number of strands=2,
style strands={1}{opacity=0}, height=1.5 cm, width=2 cm]at (9,0)  a_1;
\braid [number of strands=2,
style strands={1}{opacity=0}, height=1.5 cm, width=2 cm]at (9.1,0)  a_1;
\braid [number of strands=2,
style strands={2}{opacity=0}, height=1.5 cm, width=2.15 cm]at (9,0)  a_1;
\fill[black] (9,-2) circle (2pt);
\fill[black] (9.1,-2) circle (2pt);
\fill[black] (11.15,-2) circle (2pt);
\fill[black] (9,0) circle (2pt);
\fill[black] (11.1,0) circle (2pt);
\fill[black] (11,0) circle (2pt);

\fill[white] (12,0) circle (2pt);

\draw  node at (10,-2.5)    {$d_{21}^{12}$ };

            \end{tikzpicture}

    \caption{$(1,2)$- and $(2,1)$-crossings} \label{gbr3-12and21-crossings}
    \end{figure}
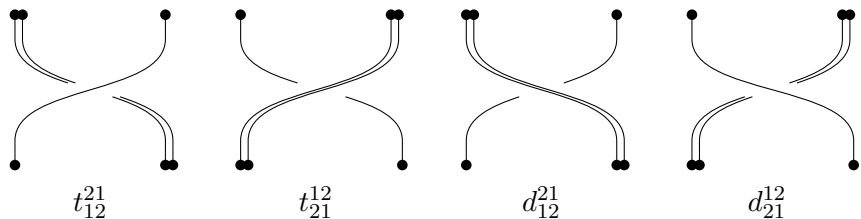
\end{enumerate}
We also denote the identity morphisms of the four objects of $\gbrcat_3$ with the following diagrams:
 \begin{figure}[!h] \centering 
     
\begin{tikzpicture}
\braid [number of strands=3, height=1 cm, width=0.1 cm]at (0,0)  1;

\fill[black] (0,0) circle (2pt);
\fill[black] (0.1,0) circle (2pt);
\fill[black] (0.2,0) circle (2pt);
\fill[black] (0,-1.5) circle (2pt);
\fill[black] (0.1,-1.5) circle (2pt);
\fill[black] (0.2,-1.5) circle (2pt);

\fill[white] (-1,0) circle (2pt);
\fill[white] (2,0) circle (2pt);
\draw  node at (0.2,-2)    {$\id_3$};

\braid [number of strands=2, height=1 cm, width=1 cm]at (2.5,0) 1;
\braid [number of strands=1, height=1 cm, width=0.1 cm]at (2.4,0) 1;

\fill[black] (2.4,0) circle (2pt);
\fill[black] (2.5,0) circle (2pt);
\fill[black] (3.5, 0) circle (2pt);
\fill[black] (2.4,-1.5) circle (2pt);
\fill[black] (2.5,-1.5) circle (2pt);
\fill[black] (3.5,-1.5) circle (2pt);

\draw  node at (3,-2)    {$\id_{21} $};

\braid [number of strands=2, height=1 cm, width=1 cm]at (5.5,0) 1;
\braid [number of strands=1, height=1 cm, width=0.1 cm]at (6.6,0) 1;

\fill[black] (5.5,0) circle (2pt);
\fill[black] (6.5,0) circle (2pt);
\fill[black] (6.6, 0) circle (2pt);
\fill[black] (5.5,-1.5) circle (2pt);
\fill[black] (6.5,-1.5) circle (2pt);
\fill[black] (6.6,-1.5) circle (2pt);

\draw  node at (6,-2)    {$\id_{12} $};

  \braid [number of strands=3, height=1 cm, width=1cm]at (8.5,0) 1;

\fill[black] (8.5,0) circle (2pt);
\fill[black] (9.5,0) circle (2pt);
\fill[black] (10.5,0) circle (2pt);
\fill[black] (8.5,-1.5) circle (2pt);
\fill[black] (9.5,-1.5) circle (2pt);
\fill[black] (10.5,-1.5) circle (2pt);

\draw  node at (9.5,-2)    {$\id_{111}$};
\end{tikzpicture}

\caption{Identity morphism diagrams} \label{gbr3-ids}
\end{figure}
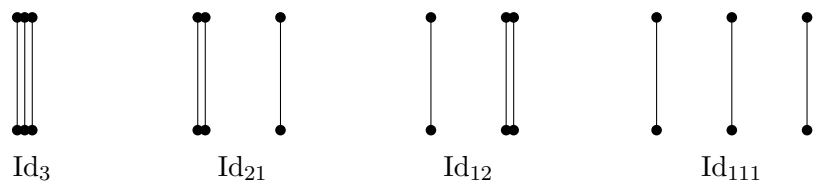

We impose the following relations on the compositions of the generating diagrams: 

\begin{enumerate}
\label{list-generalised-braid-relations-for-gbr3}
\item The multifork relation: $f_{21}^{111} f_3^{21} = f_{12}^{111} f_3^{12}$.
\begin{figure}[!h] \centering 
  \begin{tikzpicture}

\braid [number of strands=3,
style strands={1,3}{opacity=0}, height=1 cm, width=0.9cm]at (0.1,0) a_1^{-1};
\braid [number of strands=3,
style strands={1,3}{opacity=0}, height=1 cm, width=0.9cm]at (0.2,0) a_1^{-1};
\braid [number of strands=5,
style strands={2,3,4,5}{opacity=0}, height=1 cm, width=1 cm]at (0,0)  a_4;

\braid [number of strands=5,
style strands={3,4,5}{opacity=0}, height=1 cm, width=1 cm]at (0,1.5)  a_4;
\braid [number of strands=3,
style strands={1,3}{opacity=0}, height=1 cm, width=0.9cm]at (1.1,1.5) a_1^{-1};

\fill[black] (2,1.5) circle (2pt);
\fill[black] (1,1.5) circle (2pt);
\fill[black] (0,1.5) circle (2pt);
\fill[black] (0,0) circle (2pt);
\fill[black] (0.2,-1.5) circle (2pt);
\fill[black] (0.1,-1.5) circle (2pt);
\fill[black] (0,-1.5) circle (2pt);
\fill[black] (1,0) circle (2pt);
\fill[black] (1.1,0) circle (2pt);

\braid [number of strands=3,
style strands={1,3}{opacity=0}, height=1 cm, width=0.9cm]at (4.1,0) a_2;
\braid [number of strands=3,
style strands={1,3}{opacity=0}, height=1 cm, width=0.9cm]at (4,0) a_2;
\braid [number of strands=3,
style strands={1,3}{opacity=0}, height=1 cm, width=0.9cm]at (3.1,1.5) a_2;

\braid [number of strands=5,
style strands={2,1,4,5}{opacity=0}, height=1 cm, width=1 cm]at (4,0)  a_4;
\braid [number of strands=5,
style strands={1,4,5}{opacity=0}, height=1 cm, width=1 cm]at (4,1.5)  a_4;

\fill[black] (4,1.5) circle (2pt);
\fill[black] (5,1.5) circle (2pt);
\fill[black] (6,1.5) circle (2pt);
\fill[black] (6,0) circle (2pt);
\fill[black] (5.8,-1.5) circle (2pt);
\fill[black] (5.9,-1.5) circle (2pt);
\fill[black] (6,-1.5) circle (2pt);
\fill[black] (5,0) circle (2pt);
\fill[black] (4.9,0) circle (2pt);

\fill[white] (-2,0) circle (2pt);

\draw  node at (3,0)    {$ = $};

\end{tikzpicture}
\caption{The Multifork relation}
\end{figure}
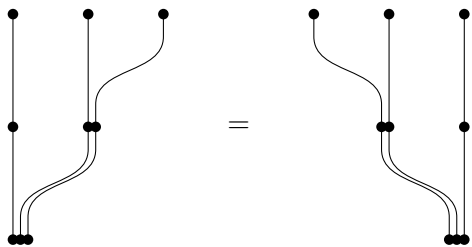

\item The braid relation: $ t_{\crossn{1}\crossn{1}1}^{111}t_{1\crossn{1}\crossn{1}}^{111}t_{\crossn{1}\crossn{1}1}^{111}=t_{1\crossn{1}\crossn{1}}^{111}t_{\crossn{1}\crossn{1}1}^{111}t_{1\crossn{1}\crossn{1}}^{111}$.

\begin{figure}[!h] \centering 
  \begin{tikzpicture}
\braid [number of strands=3, height=1 cm, width=1cm]at (0,0) a_1 a_2 a_1;

\fill[black] (0,0) circle (2pt);
\fill[black] (1,0) circle (2pt);
\fill[black] (2,0) circle (2pt);
\fill[black] (0,-1.16) circle (2pt);
\fill[black] (1,-1.16) circle (2pt);
\fill[black] (2,-1.16) circle (2pt);
\fill[black] (0,-2.32) circle (2pt);
\fill[black] (1,-2.32) circle (2pt);
\fill[black] (2,-2.32) circle (2pt);
\fill[black] (0,-3.5) circle (2pt);
\fill[black] (1,-3.5) circle (2pt);
\fill[black] (2,-3.5) circle (2pt);

\braid [number of strands=3, height=1 cm, width=1cm]at (5,0) a_2 a_1 a_2;

\fill[black] (5,0) circle (2pt);
\fill[black] (6,0) circle (2pt);
\fill[black] (7,0) circle (2pt);
\fill[black] (5,-1.16) circle (2pt);
\fill[black] (6,-1.16) circle (2pt);
\fill[black] (7,-1.16) circle (2pt);
\fill[black] (5,-2.32) circle (2pt);
\fill[black] (6,-2.32) circle (2pt);
\fill[black] (7,-2.32) circle (2pt);
\fill[black] (5,-3.5) circle (2pt);
\fill[black] (6,-3.5) circle (2pt);
\fill[black] (7,-3.5) circle (2pt);

\draw  node at (3.5,-1.75)    {$ = $};
\end{tikzpicture}
\caption{The braid relation}
\end{figure}
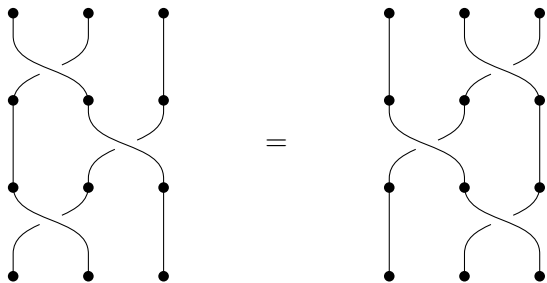

\item The inverse relations: $t_{21}^{12} d_{12}^{21} = \id_{12}, d_{12}^{21} t_{21}^{12} = \id_{21}$, $t_{\crossn{1}\crossn{1}1}^{111}d_{\crossn{1}\crossn{1}1}^{111} = t_{1\crossn{1}\crossn{1}}^{111} d_{1\crossn{1}\crossn{1}}^{111} = \id_{111}$.

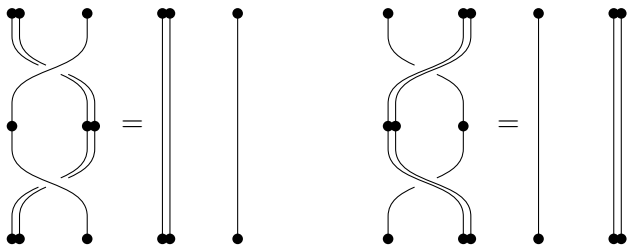
\begin{figure}[!h] \centering

\begin{tikzpicture}

\fill[white] (-2,0) circle (2pt);

\fill[black] (0,1.5) circle (2pt);
\fill[black] (0.1,1.5) circle (2pt);
\fill[black] (1,1.5) circle (2pt);

\braid [number of strands=2,
style strands={3}{opacity=0}, height=1 cm, width=1 cm]at (0,1.5)  a_1^{-1};
\braid [number of strands=2,
style strands={2}{opacity=0}, height=1 cm, width=1 cm]at (0.1,1.5)  a_1^{-1};

\fill[black] (1.1,0) circle (2pt);
\fill[black] (0,0) circle (2pt);
\fill[black] (1,0) circle (2pt);
\fill[black] (0,-1.5) circle (2pt);
\fill[black] (0.1,-1.5) circle (2pt);
\fill[black] (1,-1.5) circle (2pt);

\braid [number of strands=2,
style strands={1}{opacity=0}, height=1 cm, width=1 cm]at (0,0)  a_1;
\braid [number of strands=2,
style strands={1}{opacity=0}, height=1 cm, width=1 cm]at (0.1,0)  a_1;
\braid [number of strands=2,
style strands={2}{opacity=0}, height=1 cm, width=1 cm]at (0,0)  a_1;

\draw  node at (1.6,0)    {$ = $ };

\braid [number of strands=4,
style strands={3,4}{opacity=0}, height=2.5 cm, width=1 cm]at (2,1.5)  a_3;

\braid [number of strands=4,
style strands={2,3,4}{opacity=0}, height=2.5 cm, width=1 cm]at (2.1,1.5)  a_3;

\fill[black] (2.1,1.5) circle (2pt);
\fill[black] (2,1.5) circle (2pt);
\fill[black] (3,1.5) circle (2pt);
\fill[black] (2,-1.5) circle (2pt);
\fill[black] (2.1,-1.5) circle (2pt);
\fill[black] (3,-1.5) circle (2pt);

\fill[black] (5,1.5) circle (2pt);
\fill[black] (6.1,1.5) circle (2pt);
\fill[black] (6,1.5) circle (2pt);

\braid [number of strands=2,
style strands={3}{opacity=0}, height=1 cm, width=1 cm]at (5,0)  a_1;
\braid [number of strands=2,
style strands={2}{opacity=0}, height=1 cm, width=1 cm]at (5.1,0)  a_1;

\fill[black] (5.1,0) circle (2pt);
\fill[black] (5,0) circle (2pt);
\fill[black] (6,0) circle (2pt);
\fill[black] (5,-1.5) circle (2pt);
\fill[black] (6.1,-1.5) circle (2pt);
\fill[black] (6,-1.5) circle (2pt);

\braid [number of strands=2,
style strands={1}{opacity=0}, height=1 cm, width=1 cm]at (5,1.5)  a_1^{-1};
\braid [number of strands=2,
style strands={1}{opacity=0}, height=1 cm, width=1 cm]at (5.1,1.5)  a_1^{-1};
\braid [number of strands=2,
style strands={2}{opacity=0}, height=1 cm, width=1 cm]at (5,1.5)  a_1^{-1};

\draw  node at (6.6,0)    {$ = $ };

\braid [number of strands=4,
style strands={3,4}{opacity=0}, height=2.5 cm, width=1 cm]at (7,1.5)  a_3;

\braid [number of strands=4,
style strands={1,3,4}{opacity=0}, height=2.5 cm, width=1 cm]at (7.1,1.5)  a_3;

\fill[black] (8.1,1.5) circle (2pt);
\fill[black] (7,1.5) circle (2pt);
\fill[black] (8,1.5) circle (2pt);
\fill[black] (7,-1.5) circle (2pt);
\fill[black] (8.1,-1.5) circle (2pt);
\fill[black] (8,-1.5) circle (2pt);

\braid [number of strands=3, height=1 cm, width=0.75 cm]at (1,-3)  a_1 a_1^{-1};

\fill[black] (1,-3) circle (2pt);
\fill[black] (1.75,-3) circle (2pt);
\fill[black] (2.5,-3) circle (2pt);
\fill[black] (1,-4.25) circle (2pt);
\fill[black] (1.75,-4.25) circle (2pt);
\fill[black] (2.5,-4.25) circle (2pt);
\fill[black] (1,-5.5) circle (2pt);
\fill[black] (1.75,-5.5) circle (2pt);
\fill[black] (2.5,-5.5) circle (2pt);

\draw  node at (3,-4.25)    {$ = $ };

\braid [number of strands=5,style strands={5,4}{opacity=0}, height=2 cm, width=0.75 cm]at (3.5,-3)  a_4;

\fill[black] (3.5,-3) circle (2pt);
\fill[black] (4.25,-3) circle (2pt);
\fill[black] (5,-3) circle (2pt);

\fill[black] (3.5,-5.5) circle (2pt);
\fill[black] (4.25,-5.5) circle (2pt);
\fill[black] (5,-5.5) circle (2pt);

\draw  node at (5.5,-4.25)    {$ = $ };

\braid [number of strands=3, height=1 cm, width=0.75 cm]at (6,-3)  a_2 a_2^{-1};

\fill[black] (6,-3) circle (2pt);
\fill[black] (6.75,-3) circle (2pt);
\fill[black] (7.5,-3) circle (2pt);
\fill[black] (6,-4.25) circle (2pt);
\fill[black] (6.75,-4.25) circle (2pt);
\fill[black] (7.5,-4.25) circle (2pt);
\fill[black] (6,-5.5) circle (2pt);
\fill[black] (6.75,-5.5) circle (2pt);
\fill[black] (7.5,-5.5) circle (2pt);

\end{tikzpicture}

\caption{The inverse relations}
\end{figure}

\item The pitchfork relation: $f^{111}_{12} t^{12}_{21}= t_{\crossn{1}\crossn{1}1}^{111} t_{1\crossn{1}\crossn{1}}^{111} f^{111}_{21}$.

\begin{figure}[!h] \centering 
\begin{tikzpicture}
\fill[white] (1,0) circle (2pt);

\braid [number of strands=2,
style strands={1}{opacity=0}, height=1.5 cm, width=1.9 cm]at (3,0)  a_1;
\braid [number of strands=2,
style strands={1}{opacity=0}, height=1.5 cm, width=1.9 cm]at (3.1,0)  a_1;
\braid [number of strands=2,
style strands={2}{opacity=0}, height=1.5 cm, width=2.05 cm]at (3,0)  a_1;
\fill[black] (3,-2) circle (2pt);
\fill[black] (3.1,-2) circle (2pt);
\fill[black] (5.05,-2) circle (2pt);
\fill[black] (3,0) circle (2pt);
\fill[black] (4.9,0) circle (2pt);
\fill[black] (5,0) circle (2pt);

\braid [number of strands=3,
style strands={1,3}{opacity=0}, height=1 cm, width=1cm]at (2.9,1.5) a_2;
\braid [number of strands=5,
style strands={2,4,5}{opacity=0}, height=1 cm, width=1 cm]at (3,1.5)  a_4;

\fill[black] (3,1.5) circle (2pt);
\fill[black] (5,1.5) circle (2pt);
\fill[black] (3.9,1.5) circle (2pt);

\draw  node at (6.25,-0.25)    {$ = $};

\braid [number of strands=3, height=0.85 cm, width=1cm]at (7.5,1.5) a_1 a_2;

\braid [number of strands=3,
style strands={1,3}{opacity=0}, height=0.75 cm, width=0.9cm]at (7.6,-0.7) a_1^{-1};
\braid [number of strands=5,
style strands={2,4,5}{opacity=0}, height=0.75 cm, width=1 cm]at (7.5,-0.7)  a_4;

\fill[black] (7.5,1.5) circle (2pt);
\fill[black] (8.5,1.5) circle (2pt);
\fill[black] (9.5,1.5) circle (2pt);

\fill[black] (7.5,0.45) circle (2pt);
\fill[black] (8.5,0.45) circle (2pt);
\fill[black] (9.5,0.45) circle (2pt);

\fill[black] (7.5,-0.7) circle (2pt);
\fill[black] (9.5,-0.7) circle (2pt);
\fill[black] (7.6,-2) circle (2pt);
\fill[black] (7.5,-2) circle (2pt);
\fill[black] (8.5,-0.7) circle (2pt);
\fill[black] (9.5,-2) circle (2pt);

\end{tikzpicture}
\caption{The pitchfork relation}
\end{figure}
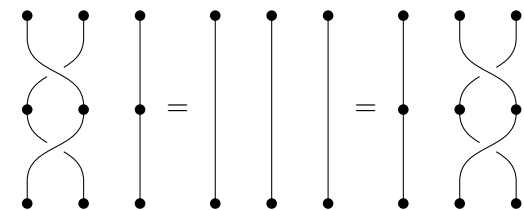

\end{enumerate}

and all the relations obtainable from these via the three operations:

\begin{itemize}
    \item \emph{vertical reflection}: swap the source and the target partitions, change all forks into merges and vice versa, reverse the parity of the crossings;
    \item \emph{horizontal reflection}: swap the partitions $12$ and $21$, reverse the parity of the crossings;
    \item \emph{blackboard reflection}: reverse the parity of the crossings. 
\end{itemize}

\end{definition}

\begin{remark}\label{gbr2}  
The category $\gbrcat_2$ has a similar description. 

Its two objects are the ordered partitions of $2$: $ (2)$  and   $(11)$. 

Its generating morphisms are $f_{2}^{11}$, $g_{11}^{2}$, ${t_{\crossn{1}\crossn{1}}^{11}}$, ${d_{\crossn{1}\crossn{1}}^{11}}$, as depicted in Figure \ref{gbr2-generators}, 
and the only relation between them is
\begin{equation}\label{gbr2relation}
 {t_{\crossn{1}\crossn{1}}^{11}} {d_{\crossn{1}\crossn{1}}^{11}} = \id_{11} = {d_{\crossn{1}\crossn{1}}^{11}} {t_{\crossn{1}\crossn{1}}^{11}}.
\end{equation}

\begin{figure}[!h] \centering 
\label{figure-gbr2-generators}
     
\begin{tikzpicture}
 \braid [number of strands=3,
style strands={1,3}{opacity=0}, height=1 cm, width=0.9cm]at (0.1,0) a_1^{-1};
%\braid [number of strands=3,
%style strands={1,3}{opacity=0}, height=1 cm, width=0.9cm]at (0.2,0) a_1^{-1};
\braid [number of strands=5,
style strands={2,3,4,5}{opacity=0}, height=1 cm, width=1 cm]at (0,0)  a_4;
\fill[black] (0,0) circle (2pt);
\fill[black] (0.1,-1.5) circle (2pt);
\fill[black] (0,-1.5) circle (2pt);
\fill[black] (1,0) circle (2pt);

\fill[white] (-1,0) circle (2pt);
\fill[white] (2,0) circle (2pt);
\draw  node at (0.5,-2)    {$f_2^{11} $};

\braid [number of strands=3,
style strands={2,3}{opacity=0}, height=1 cm, width=0.9cm]at (3.1,0) a_1;

\braid [number of strands=5,
style strands={2,3,4,5}{opacity=0}, height=1 cm, width=1 cm]at (3,0)  a_4;

\fill[black] (3,-1.5) circle (2pt);
\fill[black] (4,-1.5) circle (2pt);
\fill[black] (3,0) circle (2pt);
\fill[black] (3.1,0) circle (2pt);

\draw  node at (3.5,-2)    {$g_{11}^{2} $};

\braid [number of strands=1, height=1 cm, width=1cm]at (6,0) a_1;

\fill[black] (6,0) circle (2pt);
\fill[black] (7,0) circle (2pt);
\fill[black] (6,-1.5) circle (2pt);
\fill[black] (7,-1.5) circle (2pt);

\draw  node at (6.5,-2)    {${t_{\crossn{1}\crossn{1}}^{11}} $};

\braid [number of strands=2, height=1 cm, width=1cm]at (9,0) a_1^{-1};

\fill[black] (9,0) circle (2pt);
\fill[black] (10,0) circle (2pt);
\fill[black] (9,-1.5) circle (2pt);
\fill[black] (10,-1.5) circle (2pt);

\draw  node at (9.5,-2)    {${d_{\crossn{1}\crossn{1}}^{11}} $};
            \end{tikzpicture}

    \caption{Generators of $\gbrcat_2$} \label{gbr2-generators}
    \end{figure}

\end{remark}

\newpage

\section{Categorical braid actions of $\gbrcat_3$}
\label{cat_act_sec}

In this section, we focus on the \emph{categorical actions} of $\gbrcat_3$ on enhanced triangulated categories.

While at a general level the nature of such actions is clear, things become considerably more complicated when it comes to verifying whether a particular assignment of a network of categories and functors is a categorical action or not.

From this perspective, we give a result on some conditions, which are easier to verify, sufficient for a given assignment 
is a categorical action of $\gbrcat_3$.

A posteriori, following the results in \cite{AL4}, such conditions define a special case of what is called a skein-triangulated representation.

We refer to \cite{AL2} and \cite{To}   for the details on enhanced triangulated categories.

\subsection{Definitions and general notations}

Let $\bar{i}=(i_1,\ldots, i_k)$ denote a partition of $3$. The general principle is that the source partition and the target partition are denoted by the subscript and the superscript, respectively. Specifically, if $\bar{i}$ and $\bar{j}$ allow a fork between them, let $f_{\bar{i}}^{\bar{j}}$ denote this unique fork. Similarly, let $g_{\bar{i}}^{\bar{j}}$ denote the unique merge and when $\bar{i} \neq \bar{j}$, let $t_{\bar{i}}^{\bar{j}}$ and $d_{\bar{i}}^{\bar{j}}$ denote the unique positive and the negatives crossings. When $\bar{i} = \bar{j}$, there is an ambiguity, so we overscore the subscript indices which correspond to the strands being crossed, e.g. $t_{\crossn{1}\crossn{1}1}^{111}$ or $t_{1\crossn{1}\crossn{1}}^{111}$.

\begin{definition}
A \emph{(weak) categorical action of $\gbrcat_3$} is the assignment of a category to each object of $\gbrcat_3$ and a functor to each morphism of  $\gbrcat_3$, in a way that is compatible with the generalised braid relations.

In other words, consider the category $\cat /\mathcal{I}so $ the category whose objects are small categories and whose morphisms are the equivalence classes of functors between them up to isomorphism.

Then a (weak) categorical action of $\gbrcat_3$ is a functor from $\gbrcat_3$ to $\cat /\mathcal{I}so $.
\end{definition}

Next, we define a special class of categorical representations of $\gbrcat_3$ where it acts on enhanced triangulated categories by enhanced functors, and these functors satisfy several triangulated-categorical relations. In particular, these relations ensure that the whole representation is generated from its fork functors:

\begin{definition}
\label{defn-split-p^2-representation-of-gbr-3}
A split-$\mathbb{P}^2$ representation of $\gbrcat_3$ is a categorical representation of $\gbrcat_3$ which satisfies the following conditions:
\begin{enumerate}
    \item Each partition $\bar{i}$ of $3$ is represented by an enhanced triangulated category $\C_{\bar{i}}$.
    \item 
    Generators $f_{\bar{i}}^{\bar{j}}$, $g_{\bar{i}}^{\bar{j}}$, $t_{\bar{i}}^{\bar{j}}$, $d_{\bar{i}}^{\bar{j}}$ are represented by enhanced exact functors $F_{\bar{i}}^{\bar{j}}$, $G_{\bar{i}}^{\bar{j}}$, $T_{\bar{i}}^{\bar{j}}$, $D_{\bar{i}}^{\bar{j}}$. 
    \item
    \label{item-existence-of-both-adjoints}
    Each fork functor $F_{\bar{i}}^{\bar{j}}$ has enhanced left adjoint $L_{\bar{j}}^{\bar{i}}$ and enhanced right adjoint $R_{\bar{j}}^{\bar{i}}$. 
    \item 
    \label{item-1-2-and-2-1-forks-are-spherical-functors}
    $(1,2)$-fork $F_{3}^{12}$ and $(2,1)$-fork $F_{3}^{21}$ are split $\mathbb{P}^2$-functors with $H = [-2]$. 
    \item 
    \label{item-1-1-forks-are-spherical-functors}
    $(1,1)$-forks $F_{12}^{111}$ and $F_{21}^{111}$ are split spherical functors with cotwist $[-2]$.  
    
           \item 
        \label{item-fl-3-condition}
        There exist isomorphisms 
        \begin{align*}
            R_{12}^3 R_{111}^{12} F_{12}^{111} F_{3}^{12} \simeq \id_3 \oplus [-2] \oplus [-2] \oplus [-4] \oplus [-4] \oplus [-6]
            \simeq R_{21}^3 R_{111}^{21} F_{21}^{111} F_{3}^{21}
        \end{align*}
        
\begin{figure}[!h] \centering 
\adjustbox{scale=1,center}{
\begin{tikzpicture}

\fill[white] (-2,2) circle (2pt);

\braid [number of strands=3,
style strands={1,3}{opacity=0}, height=0.5 cm, width=0.9cm]at (0.1,1) a_1^{-1};
\braid [number of strands=3,
style strands={1,3}{opacity=0}, height=0.5 cm, width=0.9cm]at (0.2,1) a_1^{-1};
\braid [number of strands=5,
style strands={2,3,4,5}{opacity=0}, height=0.5 cm, width=1 cm]at (0,1)  a_4;

\braid [number of strands=5,
style strands={3,4,5}{opacity=0}, height=0.5 cm, width=1 cm]at (0,2)  a_4;
\braid [number of strands=3,
style strands={1,3}{opacity=0}, height=0.5 cm, width=0.9cm]at (1.1,2) a_1^{-1};

\fill[black] (2,2) circle (2pt);
\fill[black] (1,2) circle (2pt);
\fill[black] (0,2) circle (2pt);
\fill[black] (0,1) circle (2pt);
\fill[black] (0.2,0) circle (2pt);
\fill[black] (0.1,0) circle (2pt);
\fill[black] (0,0) circle (2pt);
\fill[black] (1,1) circle (2pt);
\fill[black] (1.1,1) circle (2pt);
\fill[black] (0.2,4) circle (2pt);
\fill[black] (0.1,4) circle (2pt);
\fill[black] (0,4) circle (2pt);
\fill[black] (2,3) circle (2pt);
\fill[black] (1.9,3) circle (2pt);
\fill[black] (0,3) circle (2pt);

\draw  node at (6,2)  {$ \simeq \mathrm{Id}_3 \oplus [-2] \oplus [-2] \oplus [-4] \oplus [-4] \oplus [-6] \simeq   $};

\braid [number of strands=3,
style strands={1,2}{opacity=0}, height=0.5 cm, width=0.9cm]at (0.1,3) a_2^{-1};
\braid [number of strands=5,
style strands={2,4,5}{opacity=0}, height=0.5 cm, width=1 cm]at (0,3)  a_4^{-1};

\braid [number of strands=3,
style strands={2,3}{opacity=0}, height=0.5 cm, width=1.8cm]at (0.1,4) a_1;
\braid [number of strands=3,
style strands={2,3}{opacity=0}, height=0.5 cm, width=1.8cm]at (0.2,4) a_1;
\braid [number of strands=5,
style strands={2,3,4,5}{opacity=0}, height=0.5 cm, width=2 cm]at (0,4)  a_4;

\braid [number of strands=3,
style strands={1,3}{opacity=0}, height=0.5 cm, width=0.9cm]at (10.1,1) a_2;
\braid [number of strands=3,
style strands={1,3}{opacity=0}, height=0.5 cm, width=0.9cm]at (10,1) a_2;
\braid [number of strands=5,
style strands={2,1,4,5}{opacity=0}, height=0.5 cm, width=1 cm]at (10,1)  a_4;

\braid [number of strands=3,
style strands={1,3}{opacity=0}, height=0.5 cm, width=0.9cm]at (9.1,2) a_2;
\braid [number of strands=5,
style strands={1,4,5}{opacity=0}, height=0.5 cm, width=1 cm]at (10,2)  a_4;

\braid [number of strands=3,
style strands={2,3}{opacity=0}, height=0.5 cm, width=0.9cm]at (10.1,3) a_1;
\braid [number of strands=5,
style strands={2,4,5}{opacity=0}, height=0.5 cm, width=1 cm]at (10,3)  a_4;

\braid [number of strands=3,
style strands={1,2}{opacity=0}, height=0.5 cm, width=1.8cm]at (8.2,4) a_2^{-1};
\braid [number of strands=3,
style strands={1,2}{opacity=0}, height=0.5 cm, width=1.8cm]at (8.3,4) a_2^{-1};
\braid [number of strands=5,
style strands={1,2,4,5}{opacity=0}, height=0.5 cm, width=1 cm]at (10,4)  a_4^{-1};

\fill[black] (10,2) circle (2pt);
\fill[black] (11,2) circle (2pt);
\fill[black] (12,2) circle (2pt);
\fill[black] (12,1) circle (2pt);
\fill[black] (11.8,0) circle (2pt);
\fill[black] (11.9,0) circle (2pt);
\fill[black] (12,0) circle (2pt);
\fill[black] (12,1) circle (2pt);
\fill[black] (10.9,1) circle (2pt);

\fill[black] (12,4) circle (2pt);
\fill[black] (11.9,4) circle (2pt);
\fill[black] (11.8,4) circle (2pt);

\fill[black] (12,3) circle (2pt);
\fill[black] (10,3) circle (2pt);
\fill[black] (10.1,3) circle (2pt);

\end{tikzpicture}}

\end{figure}
        
        which together with $\mathbb{P}^2$-functor structures on $F_{3}^{12}$ and $F_{3}^{21}$ identify the maps 
        \begin{align} \label{eq1}
            R_{12}^{3} F_{3}^{12} \xrightarrow{R_{12}^{3} \action F_{3}^{12}} R_{12}^3 R_{111}^{12} F_{12}^{111} F_{3}^{12}, \\ \label{eq2}
            R_{21}^{3} F_{3}^{21} \xrightarrow{R_{21}^{3} \action F_{3}^{21}} R_{21}^3 R_{111}^{21} F_{21}^{111} F_{3}^{21},
        \end{align}    
        with the maps 
        \begin{equation*}
            \id_3 \oplus [-2] \oplus [-4] \rightarrow \id_3 \oplus [-2] \oplus [-2] \oplus [-4] \oplus [-4] \oplus [-6]
        \end{equation*}
        which are the direct summand embeddings whose images are $\id_3$ $\oplus$ the first $[-2]$ $\oplus$ the first$[-4]$, and 
        $\id_3$ $\oplus$ the second $[-2]$ $\oplus$ the second $[-4]$, respectively.
    \item 
    \label{item-the-12-skein-relation}
    The following diagram can be completed to an exact triangle in $D(\C_{12}\text{-}\C_{12})$:  
        \begin{equation}
            \label{eqn-skein-relation-no-crossing}
            F_{3}^{12} R_{12}^{3}
            \rightarrow
            R_{111}^{12} F_{21}^{111} R_{111}^{21} F_{12}^{111}
            \rightarrow
            \id_{12}[-2]. 
        \end{equation}

     \begin{figure}[!h] \centering 
  \adjustbox{scale=0.9,center}{   \begin{tikzpicture}
\braid [number of strands=3,
style strands={2,3}{opacity=0}, height=1.5 cm, width=1.8cm]at (5.2,-4) a_1;
\braid [number of strands=3,
style strands={2,3}{opacity=0}, height=1.5 cm, width=1.8cm]at (5.1,-4) a_1;
\braid [number of strands=5,
style strands={2,3,4,5}{opacity=0}, height=1.5 cm, width=1 cm]at (5,-4)  a_4;

\fill[white] (3,-2) circle (2pt);

\fill[black] (5.1,-4) circle (2pt);
\fill[black] (5.2,-4) circle (2pt);

  \braid [number of strands=3,
style strands={1,3}{opacity=0}, height=1.5 cm, width=1.8cm]at (5.1,-2) a_1^{-1};
  \braid [number of strands=3,
style strands={1,3}{opacity=0}, height=1.5 cm, width=1.8cm]at (5.2,-2) a_1^{-1};
\braid [number of strands=5,
style strands={2,3,4,5}{opacity=0}, height=1.5 cm, width=2 cm]at (5,-2)  a_4;

\fill[black] (5,-2) circle (2pt);
\fill[black] (6.9,-2) circle (2pt);
\fill[black] (7,-2) circle (2pt);

\fill[black] (5,-4) circle (2pt);

\fill[black] (5,-6) circle (2pt);
\fill[black] (6.9,-6) circle (2pt);
\fill[black] (7,-6) circle (2pt);

\braid [number of strands=3,
style strands={1,2}{opacity=0}, height=0.5 cm, width=0.9cm]at (10.1,-2) a_2^{-1};
\braid [number of strands=5,
style strands={2,4,5}{opacity=0}, height=0.5 cm, width=1 cm]at (10,-2)  a_4^{-1};

\braid [number of strands=3,style strands={3,1}{opacity=0}, height=0.5 cm, width=0.9cm]at (10.1,-3) a_1^{-1};
\braid [number of strands=5, style strands={2,4,5}{opacity=0}, height=0.5 cm, width=1 cm]at (10,-3)  a_4;

\braid [number of strands=3,
style strands={2,3}{opacity=0}, height=0.5 cm, width=0.9cm]at (10.1,-4) a_1;
\braid [number of strands=5,
style strands={2,4,5}{opacity=0}, height=0.5 cm, width=1 cm]at (10,-4)  a_4;

\braid [number of strands=3,style strands={1,3}{opacity=0}, height=0.5 cm, width=0.9cm]at (10.1,-5) a_2;
\braid [number of strands=5, style strands={2,4,5}{opacity=0}, height=0.5 cm, width=1 cm]at (10,-5)  a_4;

\fill[black] (10,-2) circle (2pt);
\fill[black] (11.9,-2) circle (2pt);
\fill[black] (12,-2) circle (2pt);

\fill[black] (10,-3) circle (2pt);
\fill[black] (11,-3) circle (2pt);
\fill[black] (12,-3) circle (2pt);

\fill[black] (10,-4) circle (2pt);
\fill[black] (10.1,-4) circle (2pt);
\fill[black] (12,-4) circle (2pt);

\fill[black] (10,-5) circle (2pt);
\fill[black] (11,-5) circle (2pt);
\fill[black] (12,-5) circle (2pt);

\fill[black] (10,-6) circle (2pt);
\fill[black] (11.9,-6) circle (2pt);
\fill[black] (12,-6) circle (2pt);

\draw  node at (13,-4) [right]   {$\longrightarrow $};
\draw  node at (8,-4) [right]   {$\longrightarrow $};

\braid [number of strands=5,
style strands={2,4,5}{opacity=0}, height=3.5 cm, width=1cm]at (15,-2) a_4;
\braid [number of strands=5,
style strands={1,2,4,5}{opacity=0}, height=3.5 cm, width=1cm]at (14.9,-2) a_4;

\fill[black] (15,-2) circle (2pt);
\fill[black] (16.9,-2) circle (2pt);
\fill[black] (17,-2) circle (2pt);

\fill[black] (15,-6) circle (2pt);
\fill[black] (16.9,-6) circle (2pt);
\fill[black] (17,-6) circle (2pt);

\draw  node at (17.5,-4) [right]   {$[-2] $};

\end{tikzpicture}}

     \end{figure}

        Here the first map is the composition
        \begin{equation}
            \label{eqn-n-3-skein-first-map}
            \begin{tikzcd}
                F_{3}^{12} R^{3}_{12}
                \ar{d}{\action F_{3}^{12} R_{12}^{3} \action}
                \\
                R_{111}^{12} F_{12}^{111} F_{3}^{12} R_{12}^{3} R_{111}^{12} F_{12}^{111}
                \ar{d}{\text{multifork}}[']{\sim}
                \\
                R_{111}^{12} F_{21}^{111} F_{3}^{21} R_{21}^{3} R_{111}^{21} F_{12}^{111}
                \ar{d}{R_{111}^{12} F_{21}^{111} \trace R_{111}^{21} F_{12}^{111}}
                \\
                R_{111}^{12} F_{21}^{111} R_{111}^{21} F_{12}^{111},  
            \end{tikzcd}
        \end{equation}
        
\begin{figure}[!h] \centering 
  \adjustbox{scale=0.9,center}{ \begin{tikzpicture}

\fill[white] (-2,-2) circle (2pt);

\braid [number of strands=3,
style strands={2,3}{opacity=0}, height=1.5 cm, width=1.8cm]at (0.2,-4) a_1;
\braid [number of strands=3,
style strands={2,3}{opacity=0}, height=1.5 cm, width=1.8cm]at (0.1,-4) a_1;
\braid [number of strands=5,
style strands={2,3,4,5}{opacity=0}, height=1.5 cm, width=1 cm]at (0,-4)  a_4;

\fill[black] (0.1,-4) circle (2pt);
\fill[black] (0.2,-4) circle (2pt);

  \braid [number of strands=3,
style strands={1,3}{opacity=0}, height=1.5 cm, width=1.8cm]at (0.1,-2) a_1^{-1};
  \braid [number of strands=3,
style strands={1,3}{opacity=0}, height=1.5 cm, width=1.8cm]at (0.2,-2) a_1^{-1};
\braid [number of strands=5,
style strands={2,3,4,5}{opacity=0}, height=1.5 cm, width=2 cm]at (0,-2)  a_4;

\fill[black] (0,-2) circle (2pt);
\fill[black] (1.9,-2) circle (2pt);
\fill[black] (2,-2) circle (2pt);

\fill[black] (0,-4) circle (2pt);
\fill[black] (0.1,-4) circle (2pt);
\fill[black] (0.2,-4) circle (2pt);

\fill[black] (0,-6) circle (2pt);
\fill[black] (1.9,-6) circle (2pt);
\fill[black] (2,-6) circle (2pt);

\braid [number of strands=3,
style strands={1,2}{opacity=0}, height=0.5 cm, width=0.9cm]at (12.1,-2) a_2^{-1};
\braid [number of strands=5,
style strands={2,4,5}{opacity=0}, height=0.5 cm, width=1 cm]at (12,-2)  a_4^{-1};

\braid [number of strands=3,style strands={3,1}{opacity=0}, height=0.5 cm, width=0.9cm]at (12.1,-3) a_1^{-1};
\braid [number of strands=5, style strands={2,4,5}{opacity=0}, height=0.5 cm, width=1 cm]at (12,-3)  a_4;

\braid [number of strands=3,
style strands={2,3}{opacity=0}, height=0.5 cm, width=0.9cm]at (12.1,-4) a_1;
\braid [number of strands=5,
style strands={2,4,5}{opacity=0}, height=0.5 cm, width=1 cm]at (12,-4)  a_4;

\braid [number of strands=3,style strands={1,3}{opacity=0}, height=0.5 cm, width=0.9cm]at (12.1,-5) a_2;
\braid [number of strands=5, style strands={2,4,5}{opacity=0}, height=0.5 cm, width=1 cm]at (12,-5)  a_4;

\fill[black] (12,-2) circle (2pt);
\fill[black] (13.9,-2) circle (2pt);
\fill[black] (14,-2) circle (2pt);

\fill[black] (12,-3) circle (2pt);
\fill[black] (13,-3) circle (2pt);
\fill[black] (14,-3) circle (2pt);

\fill[black] (12,-4) circle (2pt);
\fill[black] (12.1,-4) circle (2pt);
\fill[black] (14,-4) circle (2pt);

\fill[black] (12,-5) circle (2pt);
\fill[black] (13,-5) circle (2pt);
\fill[black] (14,-5) circle (2pt);

\fill[black] (12,-6) circle (2pt);
\fill[black] (13.9,-6) circle (2pt);
\fill[black] (14,-6) circle (2pt);

\draw  node at (6.5,-4) [right]   {$\longrightarrow $};
\draw  node at (10.5,-4) [right]   {$\longrightarrow $};
\draw  node at (2.5,-4) [right]   {$\longrightarrow $};

\braid [number of strands=3,
style strands={1,3}{opacity=0}, height=0.3 cm, width=1cm]at (3.9,-5.2) a_2;
\braid [number of strands=5,
style strands={2,4,5}{opacity=0}, height=0.3 cm, width=1 cm]at (4,-5.2)  a_4;

\braid [number of strands=3,
style strands={1,2}{opacity=0}, height=0.33 cm, width=1cm]at (3.9,-4.6) a_2^{-1};
\braid [number of strands=5,
style strands={2,4,5}{opacity=0}, height=0.33 cm, width=1 cm]at (4,-4.6)  a_4^{-1};

\braid [number of strands=3,
style strands={2,3}{opacity=0}, height=0.33 cm, width=1.8cm]at (4.1,-4) a_1;
\braid [number of strands=3,
style strands={2,3}{opacity=0}, height=0.33 cm, width=1.8cm]at (4.2,-4) a_1;
\braid [number of strands=5,
style strands={2,3,4,5}{opacity=0}, height=0.33 cm, width=1 cm]at (4,-4)  a_4;

\braid [number of strands=3,
style strands={1,2}{opacity=0}, height=0.33 cm, width=1cm]at (3.9,-2) a_2^{-1};
\braid [number of strands=5,
style strands={2,4,5}{opacity=0}, height=0.33 cm, width=1 cm]at (4,-2)  a_4^{-1};

\braid [number of strands=3,
style strands={1,3}{opacity=0}, height=0.33 cm, width=1cm]at (3.9,-2.6) a_2;
\braid [number of strands=5,
style strands={2,4,5}{opacity=0}, height=0.33 cm, width=1 cm]at (4,-2.6)  a_4;

 \braid [number of strands=3,
style strands={1,3}{opacity=0}, height=0.33 cm, width=1.8cm]at (4.1,-3.2) a_1^{-1};
\braid [number of strands=3,
style strands={1,3}{opacity=0}, height=0.33 cm, width=1.8cm]at (4.2,-3.2) a_1^{-1};
\braid [number of strands=5,
style strands={2,3,4,5}{opacity=0}, height=0.33 cm, width=2 cm]at (4,-3.2)  a_4;

\fill[black] (4,-2) circle (2pt);
\fill[black] (5.9,-2) circle (2pt);
\fill[black] (6,-2) circle (2pt);

\fill[black] (4,-2.7) circle (2pt);
\fill[black] (4.9,-2.7) circle (2pt);
\fill[black] (6,-2.7) circle (2pt);

\fill[black] (4,-3.3) circle (2pt);
\fill[black] (5.9,-3.3) circle (2pt);
\fill[black] (6,-3.3) circle (2pt);

\fill[black] (4,-4.7) circle (2pt);
\fill[black] (5.9,-4.7) circle (2pt);
\fill[black] (6,-4.7) circle (2pt);

\fill[black] (4,-5.3) circle (2pt);
\fill[black] (4.9,-5.3) circle (2pt);
\fill[black] (6,-5.3) circle (2pt);

\fill[black] (4,-4) circle (2pt);
\fill[black] (4.1,-4) circle (2pt);
\fill[black] (4.2,-4) circle (2pt);

\fill[black] (4,-6) circle (2pt);
\fill[black] (5.9,-6) circle (2pt);
\fill[black] (6,-6) circle (2pt);

\braid [number of strands=3,
style strands={1,3}{opacity=0}, height=0.3 cm, width=1cm]at (7.9,-5.2) a_2;
\braid [number of strands=5,
style strands={2,4,5}{opacity=0}, height=0.3 cm, width=1 cm]at (8,-5.2)  a_4;

\braid [number of strands=3,
style strands={1,2}{opacity=0}, height=0.33 cm, width=1cm]at (7.9,-2) a_2^{-1};
\braid [number of strands=5,
style strands={2,4,5}{opacity=0}, height=0.33 cm, width=1 cm]at (8,-2)  a_4^{-1};

\braid [number of strands=3,
style strands={2,3}{opacity=0}, height=0.33 cm, width=0.8cm]at (8.1,-4.6) a_1;
\braid [number of strands=5,
style strands={2,4,5}{opacity=0}, height=0.33 cm, width=1 cm]at (8,-4.6)  a_4;

\braid [number of strands=3,
style strands={2,3}{opacity=0}, height=0.3 cm, width=1.8cm]at (8.2,-4) a_1;

\braid [number of strands=5,
style strands={2,3,4,5}{opacity=0}, height=0.33 cm, width=1 cm]at (8,-4)  a_4;
\braid [number of strands=5,
style strands={2,3,4,5}{opacity=0}, height=0.33 cm, width=1 cm]at (8.1,-4)  a_4;

  \braid [number of strands=3,
style strands={1,3}{opacity=0}, height=0.33 cm, width=0.8cm]at (8.1,-2.6) a_1^{-1};
\braid [number of strands=5,
style strands={2,4,5}{opacity=0}, height=0.33 cm, width=1 cm]at (8,-2.6)  a_4;

 \braid [number of strands=3,
style strands={1,3}{opacity=0}, height=0.33 cm, width=1.8cm]at (8.2,-3.2) a_1^{-1};

\braid [number of strands=5,
style strands={2,3,4,5}{opacity=0}, height=0.33 cm, width=1 cm]at (8,-3.2)  a_4;
\braid [number of strands=5,
style strands={2,3,4,5}{opacity=0}, height=0.33 cm, width=1 cm]at (8.1,-3.2)  a_4;

\fill[black] (8,-2) circle (2pt);
\fill[black] (9.9,-2) circle (2pt);
\fill[black] (10,-2) circle (2pt);

\fill[black] (8,-2.7) circle (2pt);
\fill[black] (8.9,-2.7) circle (2pt);
\fill[black] (10,-2.7) circle (2pt);

\fill[black] (8,-3.3) circle (2pt);
\fill[black] (8.1,-3.3) circle (2pt);
\fill[black] (10,-3.3) circle (2pt);

\fill[black] (8,-4) circle (2pt);
\fill[black] (8.1,-4) circle (2pt);
\fill[black] (8.2,-4) circle (2pt);

\fill[black] (8,-5.3) circle (2pt);
\fill[black] (8.9,-5.3) circle (2pt);
\fill[black] (10,-5.3) circle (2pt);

\fill[black] (8,-4.7) circle (2pt);
\fill[black] (8.1,-4.7) circle (2pt);
\fill[black] (10,-4.7) circle (2pt);

\fill[black] (8,-6) circle (2pt);
\fill[black] (9.9,-6) circle (2pt);
\fill[black] (10,-6) circle (2pt);

\end{tikzpicture} }
\end{figure}

        and the second map is the composition
        \begin{equation}
            \label{eqn-n-3-skein-second-map}
            R_{111}^{12} F_{21}^{111} R_{111}^{21} F_{12}^{111}
            \simeq
            L_{111}^{12} F_{21}^{111} R_{111}^{21} F_{12}^{111}[-2]
            \xrightarrow{\trace[-2]}
            \id_{12}[-2].
        \end{equation}
    \item 
    \label{item-the-21-skein-relation}
    The following diagram can be completed to an exact triangle in $D(\C_{21}\text{-}\C_{21})$:  
        \begin{equation}
            \label{eqn-skein-relation21-no-crossing}
            F_{3}^{21} R_{21}^{3}
            \rightarrow
            R_{111}^{21} F_{12}^{111} R_{111}^{12} F_{21}^{111}
            \rightarrow
            \id_{21}[-2]. 
        \end{equation}
        Its two maps are defined analogously to \eqref{eqn-n-3-skein-first-map} and \eqref{eqn-n-3-skein-second-map}:
        \begin{equation}
            \label{eqn-n-3-skein-first-map-21}
            \begin{tikzcd}
                F_{3}^{21} R^{3}_{21}
                \ar{d}{\action F_{3}^{21} R_{21}^{3} \action}
                \\
                R_{111}^{21} F_{21}^{111} F_{3}^{21} R_{21}^{3} R_{111}^{21} F_{21}^{111}
                \ar{d}{\text{multifork}}[']{\sim}
                \\
                R_{111}^{21} F_{12}^{111} F_{3}^{12} R_{12}^{3} R_{111}^{12} F_{21}^{111}
                \ar{d}{R_{111}^{21} F_{12}^{111} \trace R_{111}^{12} F_{21}^{111}}
                \\
                R_{111}^{21} F_{12}^{111} R_{111}^{12} F_{21}^{111},  
            \end{tikzcd}
        \end{equation}
        and 
        \begin{equation}
            \label{eqn-n-3-skein-second-map-21}
            R_{111}^{21} F_{12}^{111} R_{111}^{12} F_{21}^{111}
            \simeq
            L_{111}^{21} F_{12}^{111} R_{111}^{12} F_{21}^{111}[-2]
            \xrightarrow{\trace[-2]}
            \id_{21}[-2].
        \end{equation}    
    
    \item \label{item-valency-2-merge-condition} 
    For $(1,1)$-merges $G_{\bar{i}}^{\bar{j}}$ we have
        \begin{equation}
            L_{\bar{i}}^{\bar{j}}[-1] \simeq G_{\bar{i}}^{\bar{j}} \simeq R_{\bar{i}}^{\bar{j}}[1].
        \end{equation}
    \item \label{item-valency-3-merge-condition}
    For $(1,2)$- and $(2,1)$-merges $G_{\bar{i}}^{\bar{j}}$ we have 
        \begin{equation}
            L_{\bar{i}}^{\bar{j}}[-2] \simeq G_{\bar{i}}^{\bar{j}} \simeq R_{\bar{i}}^{\bar{j}}[2].
        \end{equation}
    \item \label{item-1-1-crossings-condition} 
    $(1,1)$-crossings $T_{\bar{i}}^{\bar{j}}$, $D_{\bar{i}}^{\bar{j}}$ are isomorphic to the spherical twists of $(1,1)$-forks: 
        \begin{align}
            \brTaaaCa \simeq \cone\left( F_{21}^{111} G_{111}^{21}[-1] \xrightarrow{\trace} \id_{111} \right), 
            \\
            \brTaaaCb \simeq \cone\left( F_{12}^{111}G_{111}^{12}[-1] \xrightarrow{\trace} \id_{111} \right), 
            \\
            \brDaaaCa \simeq \cone\left( \id_{111}[-1] \xrightarrow{\action} F_{21}^{111} G_{111}^{21} \right), 
            \\
            \brDaaaCb \simeq \cone\left( \id_{111}[-1] \xrightarrow{\action} F_{12}^{111} G_{111}^{12} \right), 
        \end{align}
        
\begin{figure}[!h] \centering 
\adjustbox{scale=0.9,center}{
\begin{tikzpicture}

\braid [number of strands=3,style strands={3,1}{opacity=0}, height=0.5 cm, width=0.9cm]at (0.1,-2) a_1^{-1};
\braid [number of strands=5, style strands={2,4,5}{opacity=0}, height=0.5 cm, width=1 cm]at (0,-2)  a_4;

\fill[white] (-8,-4) circle (2pt);

\braid [number of strands=3,
style strands={2,3}{opacity=0}, height=0.5 cm, width=0.9cm]at (0.1,-3) a_1;
\braid [number of strands=5,
style strands={2,4,5}{opacity=0}, height=0.5 cm, width=1 cm]at (0,-3)  a_4;

\fill[black] (0,-2) circle (2pt);
\fill[black] (1,-2) circle (2pt);
\fill[black] (2,-2) circle (2pt);

\fill[black] (0,-3) circle (2pt);
\fill[black] (0.1,-3) circle (2pt);
\fill[black] (2,-3) circle (2pt);

\fill[black] (0,-4) circle (2pt);
\fill[black] (1,-4) circle (2pt);
\fill[black] (2,-4) circle (2pt);

\draw  node at (4,-3) [right]   {$\longrightarrow$};

\braid [number of strands=5,
style strands={4,5}{opacity=0}, height=1.5 cm, width=1cm]at (6,-2) a_4;

\fill[black] (8,-2) circle (2pt);
\fill[black] (7,-2) circle (2pt);
\fill[black] (6,-2) circle (2pt);

\fill[black] (8,-4) circle (2pt);
\fill[black] (7,-4) circle (2pt);
\fill[black] (6,-4) circle (2pt);

\draw  node at (-3,-3) [right]   {$ = $};

\draw  node at (-2,-3) [right]   {$ \mathrm{Cone }$};
\draw  node at (-1,-3) [right]   {$ \mathrm{( }$};
\draw  node at (9,-3) [right]   {$ \mathrm{) }$};
\draw  node at (2.5,-3) [right]   {$ \mathrm{[-1] }$};

\braid [number of strands=3, height=1.5 cm, width=1cm]at (-6,-2) a_1^{-1};

\fill[black] (-6,-2) circle (2pt);
\fill[black] (-6,-2) circle (2pt);
\fill[black] (-5,-2) circle (2pt);
\fill[black] (-4,-2) circle (2pt);
\fill[black] (-6,-4) circle (2pt);
\fill[black] (-5,-4) circle (2pt);
\fill[black] (-4,-4) circle (2pt);

\end{tikzpicture} }
\end{figure}

\begin{figure}[!h] \centering 
\adjustbox{scale=0.9,center}{
\begin{tikzpicture}

\braid [number of strands=3,style strands={3,1}{opacity=0}, height=0.5 cm, width=0.9cm]at (6.1,-2) a_1^{-1};
\braid [number of strands=5, style strands={2,4,5}{opacity=0}, height=0.5 cm, width=1 cm]at (6,-2)  a_4;

\fill[white] (-8,-4) circle (2pt);

\braid [number of strands=3,
style strands={2,3}{opacity=0}, height=0.5 cm, width=0.9cm]at (6.1,-3) a_1;
\braid [number of strands=5,
style strands={2,4,5}{opacity=0}, height=0.5 cm, width=1 cm]at (6,-3)  a_4;

\fill[black] (6,-2) circle (2pt);
\fill[black] (7,-2) circle (2pt);
\fill[black] (8,-2) circle (2pt);

\fill[black] (6,-3) circle (2pt);
\fill[black] (6.1,-3) circle (2pt);
\fill[black] (8,-3) circle (2pt);

\fill[black] (6,-4) circle (2pt);
\fill[black] (7,-4) circle (2pt);
\fill[black] (8,-4) circle (2pt);

\draw  node at (4,-3) [right]   {$\longrightarrow$};

\braid [number of strands=5,
style strands={4,5}{opacity=0}, height=1.5 cm, width=1cm]at (0,-2) a_4;

\fill[black] (2,-2) circle (2pt);
\fill[black] (1,-2) circle (2pt);
\fill[black] (0,-2) circle (2pt);

\fill[black] (2,-4) circle (2pt);
\fill[black] (1,-4) circle (2pt);
\fill[black] (0,-4) circle (2pt);

\draw  node at (-3,-3) [right]   {$ = $};

\draw  node at (-2,-3) [right]   {$ \mathrm{Cone }$};
\draw  node at (-1,-3) [right]   {$ \mathrm{( }$};
\draw  node at (9,-3) [right]   {$ \mathrm{) }$};
\draw  node at (2.5,-3) [right]   {$ \mathrm{[-1] }$};

\braid [number of strands=3, height=1.5 cm, width=1cm]at (-6,-2) a_1;

\fill[black] (-6,-2) circle (2pt);
\fill[black] (-6,-2) circle (2pt);
\fill[black] (-5,-2) circle (2pt);
\fill[black] (-4,-2) circle (2pt);
\fill[black] (-6,-4) circle (2pt);
\fill[black] (-5,-4) circle (2pt);
\fill[black] (-4,-4) circle (2pt);

\end{tikzpicture} }
\end{figure}

    \item
    \label{item-1-2-and-2-1-crossings-condition}
    For $(1,2)$- and $(2,1)$-crossings $T_{\bar{i}}^{\bar{j}}$, $D_{\bar{i}}^{\bar{j}}$ we have:
        \begin{align}
            T_{12}^{21} \simeq \cone\left( F_{3}^{21} G_{12}^{3}[-1] \xrightarrow{\lambda} G_{111}^{21} F_{12}^{111} \right), 
            \\
            T_{21}^{12} \simeq \cone\left( F_{3}^{12} G_{21}^{3}[-1] \xrightarrow{\mu} G_{111}^{12} F_{21}^{111} \right), 
            \\
            D_{12}^{21} \simeq \cone\left( G_{111}^{21} F_{12}^{111}[-1] \xrightarrow{\lambda'} F_{3}^{21} G_{12}^{3}  \right), 
            \\
            D_{21}^{12} \simeq \cone\left( G_{111}^{12} F_{21}^{111}[-1] \xrightarrow{\mu'} F_{3}^{12} G_{21}^{3}  \right), 
        \end{align}

\begin{figure}[!h] \centering 
\adjustbox{scale=0.9,center}{ \begin{tikzpicture}

\braid [number of strands=3,
style strands={1,3}{opacity=0}, height=0.5 cm, width=1cm]at (5.9,-3) a_2;
\braid [number of strands=5,
style strands={2,4,5}{opacity=0}, height=0.5 cm, width=1 cm]at (6,-3)  a_4;

\braid [number of strands=3,
style strands={2,3}{opacity=0}, height=0.5 cm, width=0.8cm]at (6.1,-2) a_1;
\braid [number of strands=5,
style strands={2,4,5}{opacity=0}, height=0.5 cm, width=1 cm]at (6,-2)  a_4;

\fill[black] (6,-2) circle (2pt);
\fill[black] (6.1,-2) circle (2pt);
\fill[black] (8,-2) circle (2pt);

\fill[black] (6,-3) circle (2pt);
\fill[black] (6.9,-3) circle (2pt);
\fill[black] (8,-3) circle (2pt);

\fill[black] (6,-4) circle (2pt);
\fill[black] (7.9,-4) circle (2pt);
\fill[black] (8,-4) circle (2pt);

\draw  node at (4,-3) [right]   {$\longrightarrow$};

\braid [number of strands=3,
style strands={2,3}{opacity=0}, height=0.5 cm, width=1.8cm]at (0.1,-3) a_1;
\braid [number of strands=3,
style strands={2,3}{opacity=0}, height=0.5 cm, width=1.8cm]at (0.2,-3) a_1;
\braid [number of strands=5,
style strands={2,3,4,5}{opacity=0}, height=0.5 cm, width=1 cm]at (0,-3)  a_4;

 \braid [number of strands=3,
style strands={1,2}{opacity=0}, height=0.5 cm, width=1.8cm]at (-3.5,-2) a_1;
\braid [number of strands=3,
style strands={1,3}{opacity=0}, height=0.5 cm, width=1.8cm]at (0.2,-2) a_1^{-1};
\braid [number of strands=5,
style strands={2,3,4,5}{opacity=0}, height=0.5 cm, width=2 cm]at (0,-2)  a_4;

\fill[black] (2,-2) circle (2pt);
\fill[black] (0.1,-2) circle (2pt);
\fill[black] (0,-2) circle (2pt);

\fill[black] (0.2,-3) circle (2pt);
\fill[black] (0.1,-3) circle (2pt);
\fill[black] (0,-3) circle (2pt);

\fill[black] (2,-4) circle (2pt);
\fill[black] (1.9,-4) circle (2pt);
\fill[black] (0,-4) circle (2pt);

\draw  node at (-3,-3) [right]   {$ = $};

\draw  node at (-2,-3) [right]   {$ \mathrm{Cone }$};
\draw  node at (-1,-3) [right]   {$ \mathrm{( }$};
\draw  node at (9,-3) [right]   {$ \mathrm{) }$};
\draw  node at (2.5,-3) [right]   {$ \mathrm{[-1] }$};

\braid [number of strands=2,
style strands={3}{opacity=0}, height=1.5 cm, width=2 cm]at (-6,-2)  a_1^{-1};
\braid [number of strands=2,
style strands={2}{opacity=0}, height=1.5 cm, width=2 cm]at (-6.1,-2)  a_1^{-1};
\fill[black] (-6.1,-2) circle (2pt);
\fill[black] (-6,-2) circle (2pt);
\fill[black] (-4,-2) circle (2pt);
\fill[black] (-6,-4) circle (2pt);
\fill[black] (-4.1,-4) circle (2pt);
\fill[black] (-4,-4) circle (2pt);

\end{tikzpicture} }
\end{figure}

\begin{figure}[!h] \centering 
\adjustbox{scale=0.9,center}{ \begin{tikzpicture}
\fill[white] (-10,-2) circle (2pt);

\braid [number of strands=3,
style strands={1,3}{opacity=0}, height=0.5 cm, width=1cm]at (-0.1,-3) a_2;
\braid [number of strands=5,
style strands={2,4,5}{opacity=0}, height=0.5 cm, width=1 cm]at (0,-3)  a_4;

\braid [number of strands=3,
style strands={2,3}{opacity=0}, height=0.5 cm, width=0.8cm]at (0.1,-2) a_1;
\braid [number of strands=5,
style strands={2,4,5}{opacity=0}, height=0.5 cm, width=1 cm]at (0,-2)  a_4;

\fill[black] (0,-2) circle (2pt);
\fill[black] (0.1,-2) circle (2pt);
\fill[black] (2,-2) circle (2pt);

\fill[black] (0,-3) circle (2pt);
\fill[black] (0.9,-3) circle (2pt);
\fill[black] (2,-3) circle (2pt);

\fill[black] (0,-4) circle (2pt);
\fill[black] (1.9,-4) circle (2pt);
\fill[black] (2,-4) circle (2pt);

\draw  node at (4,-3) [right]   {$\longrightarrow$};

\braid [number of strands=3,
style strands={2,3}{opacity=0}, height=0.5 cm, width=1.8cm]at (6.1,-3) a_1;
\braid [number of strands=3,
style strands={2,3}{opacity=0}, height=0.5 cm, width=1.8cm]at (6.2,-3) a_1;
\braid [number of strands=5,
style strands={2,3,4,5}{opacity=0}, height=0.5 cm, width=1 cm]at (6,-3)  a_4;

 \braid [number of strands=3,
style strands={1,2}{opacity=0}, height=0.5 cm, width=1.8cm]at (2.5,-2) a_1;
\braid [number of strands=3,
style strands={1,3}{opacity=0}, height=0.5 cm, width=1.8cm]at (6.2,-2) a_1^{-1};
\braid [number of strands=5,
style strands={2,3,4,5}{opacity=0}, height=0.5 cm, width=2 cm]at (6,-2)  a_4;

\fill[black] (8,-2) circle (2pt);
\fill[black] (6.1,-2) circle (2pt);
\fill[black] (6,-2) circle (2pt);

\fill[black] (6.2,-3) circle (2pt);
\fill[black] (6.1,-3) circle (2pt);
\fill[black] (6,-3) circle (2pt);

\fill[black] (8,-4) circle (2pt);
\fill[black] (7.9,-4) circle (2pt);
\fill[black] (6,-4) circle (2pt);

\draw  node at (-3,-3) [right]   {$ = $};

\draw  node at (-2,-3) [right]   {$ \mathrm{Cone }$};
\draw  node at (-1,-3) [right]   {$ \mathrm{( }$};
\draw  node at (9,-3) [right]   {$ \mathrm{) }$};
\draw  node at (2.5,-3) [right]   {$ \mathrm{[-1] }$};

\braid [number of strands=2,
style strands={3}{opacity=0}, height=1.5 cm, width=2 cm]at (-6,-2)  a_1;
\braid [number of strands=2,
style strands={2}{opacity=0}, height=1.5 cm, width=2 cm]at (-6.1,-2)  a_1;
\fill[black] (-6.1,-2) circle (2pt);
\fill[black] (-6,-2) circle (2pt);
\fill[black] (-4,-2) circle (2pt);
\fill[black] (-6,-4) circle (2pt);
\fill[black] (-4.1,-4) circle (2pt);
\fill[black] (-4,-4) circle (2pt);

\end{tikzpicture} }
\end{figure}
        
        where $\lambda$ is defined by either of the two equal compositions 
        \begin{equation}
            \begin{tikzcd}
                F_{3}^{21} R_{12}^{3}
                \ar{d}{F_{3}^{21} R_{12}^{3}\action}
                \\
                F_{3}^{21} R_{12}^{3} R_{111}^{12} F_{12}^{111}
                \ar{d}{\text{multifork}}[']{\sim}
                \\
                F_{3}^{21} R_{21}^{3} R_{111}^{21} F_{12}^{111}
                \ar{d}{\trace R_{111}^{21} F_{12}^{111}}
                \\
                R_{111}^{21} F_{12}^{111}
            \end{tikzcd}
            \quad 
            \begin{tikzcd}
                F_{3}^{21} R_{12}^{3}
                \ar{d}{\action F_{3}^{21} R_{12}^{3}}
                \\
                R_{111}^{21} F_{21}^{111} F_{3}^{21} R_{12}^{3} 
                \ar{d}{\text{multifork}}[']{\sim}
                \\
                R_{111}^{21} F_{12}^{111} F_{3}^{12} R_{12}^{3} 
                \ar{d}{R_{111}^{21} F_{12}^{111} \trace}
                \\
                R_{111}^{21} F_{12}^{111}
            \end{tikzcd}
            ,
        \end{equation}
        $\lambda'$ is defined by either of the two equal compositions
        \begin{equation}
            \begin{tikzcd}
                L_{111}^{21} F_{12}^{111}
                \ar{d}{L_{111}^{21} F_{12}^{111} \action}
                \\
                L_{111}^{21} F_{12}^{111} F_{3}^{12} L_{12}^{3} 
                \ar{d}{\text{multifork}}[']{\sim}
                \\
                L_{111}^{21} F_{21}^{111} F_{3}^{21} L_{12}^{3}
                \ar{d}{\trace F_{3}^{21} L_{12}^{3}}
                \\
                F_{3}^{21} L_{12}^{3}
            \end{tikzcd}
            \quad 
            \begin{tikzcd}
                L_{111}^{21} F_{12}^{111}
                \ar{d}{ \action L_{111}^{21} F_{12}^{111}}
                \\
                F_{3}^{21} L_{21}^{3} L_{111}^{21} F_{12}^{111}  
                \ar{d}{\text{multifork}}[']{\sim}
                \\
                F_{3}^{21} L_{12}^{3} L_{111}^{12} F_{12}^{111} 
                \ar{d}{F_{3}^{21} L_{12}^{3}\trace}
                \\
                F_{3}^{21} L_{12}^{3}
            \end{tikzcd}
            ,
        \end{equation}
        and $\mu$ and $\mu'$ are defined similarly.
 
\end{enumerate}
\end{definition}

Such split-$\mathbb{P}^2$ representations of $\gbrcat_3$ are a subclass of skein-triangulated representations of $\gbrcat_3$, the notion defined and studied in \cite{AL4}.

\subsection{A sufficient condition for a split-$\mathbb{P}^2$ categorical action of $\gbrcat_3$}
The following is our main result in this section. It shows that if a collection of fork functors satisfies a certain subset of conditions in Definitions \ref{defn-split-p^2-representation-of-gbr-3}, we can generate a split-$\mathbb{P}^2$ representation of $\gbrcat_3$ from them:
\begin{theorem}\label{Theorem1}

Let $\C_{3}$, $\C_{12}$, $\C_{21}$, and $\C_{111}$ be enhanced triangulated categories. Let
\begin{align}
    F_{3}^{12}&\colon \C_{3} \rightarrow \C_{12} \\
    F_{3}^{21}&\colon \C_{3} \rightarrow \C_{21} \\
    F_{12}^{111}&\colon C_{12} \rightarrow \C_{111} \\
    F_{21}^{111}&\colon C_{21} \rightarrow C_{111}
\end{align}
be enhanced exact functors which satisfy the conditions 
\eqref{item-existence-of-both-adjoints}-\eqref{item-the-21-skein-relation}
of Definition \ref{defn-split-p^2-representation-of-gbr-3}. 

Then setting $F_{\bar{i}}^{\bar{j}}$ to be the fork functors, and defining merge and crossing functors  $G_{\bar{i}}^{\bar{j}}$, $T_{\bar{i}}^{\bar{j}}$, $D_{\bar{i}}^{\bar{j}}$ by the formulas in the conditions 
\eqref{item-valency-2-merge-condition}-\eqref{item-1-2-and-2-1-crossings-condition} of Definition \ref{defn-split-p^2-representation-of-gbr-3}, we obtain a split-$\mathbb{P}^2$ categorical action of $\gbrcat_3$. 

Moreover, this action also satisfies the "Flop + Flop = Twist" condition:

\begin{enumerate}
    \item $T_{21}^{12} T_{12}^{21}$ is isomorphic in $D(\C_{12}\text{-}\C_{12})$ to the $\mathbb{P}$-twist of $F_{3}^{12}$,
    \item $T_{12}^{21} T_{21}^{12}$ is isomorphic in $D(\C_{21}\text{-}\C_{21})$ to the $\mathbb{P}$-twist of $F_{3}^{21}$.
\end{enumerate}    

\end{theorem}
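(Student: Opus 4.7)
The plan is to establish that the assignments defined from the $F_{\bar{i}}^{\bar{j}}$ via items (9)--(12) of Definition~\ref{defn-split-p^2-representation-of-gbr-3} are well-defined enhanced exact functors, and then verify in turn each of the generalised braid relations as well as the ``Flop + Flop = Twist'' identity.

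First I would check well-definedness: item (3) provides both adjoints of the forks, so the formulas for $G_{\bar{i}}^{\bar{j}}$ in (9)--(10) make sense, and the cones in (11)--(12) are computed in the enhancement. The morphisms $\lambda$, $\lambda'$, $\mu$, $\mu'$ in item (12) are described by two parallel composites; their agreement is a naturality check using the multifork isomorphism and the compatibility of units and counits of the adjoint pairs from (3), using (6) to identify the relevant traces and actions with the explicit summand embeddings.

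For the inverse relations on the $(1,1)$-crossings, by item (5) the functors $F_{12}^{111}$ and $F_{21}^{111}$ are split spherical with cotwist $[-2]$, hence the cones in (11) are exactly their split spherical twists and dual twists. Standard spherical-functor theory then yields $T\circ D \simeq \id_{111} \simeq D\circ T$ for both pairs $(T_{\crossn{1}\crossn{1}1}^{111}, D_{\crossn{1}\crossn{1}1}^{111})$ and $(T_{1\crossn{1}\crossn{1}}^{111}, D_{1\crossn{1}\crossn{1}}^{111})$. The inverse relation $T_{21}^{12} \circ D_{12}^{21} \simeq \id_{12}$ (and its mirror) follows by composing the cone descriptions in (12) and collapsing the resulting iterated cone via the skein triangles in (7)--(8), with the help of (9)--(10) to convert merges back into shifted adjoints.

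The main obstacle is the categorified braid relation $T_{\crossn{1}\crossn{1}1}^{111} T_{1\crossn{1}\crossn{1}}^{111} T_{\crossn{1}\crossn{1}1}^{111} \simeq T_{1\crossn{1}\crossn{1}}^{111} T_{\crossn{1}\crossn{1}1}^{111} T_{1\crossn{1}\crossn{1}}^{111}$. I would expand both sides as iterated cones via (11) and isolate the common inner expression $R_{111}^{12} F_{21}^{111} R_{111}^{21} F_{12}^{111}$ (and its opposite on the other side). The skein triangles (7) and (8) fit $F_{3}^{12} R_{12}^{3}$ and $F_{3}^{21} R_{21}^{3}$ respectively as the first term of a triangle ending in $\id_{\bar{i}}[-2]$; substituting these into the iterated cones forces both compositions through a common symmetric term. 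This is the symmetry, parallel to the strategy of Khovanov--Thomas \cite{KT}, that collapses the two sides of the braid relation to the same object.

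The remaining relations are routine. The multifork relation is built into the hypotheses, being the isomorphism invoked throughout (6)--(8) and (12). The pitchfork relation follows by post-composing the cone description of $T_{21}^{12}$ from (12) with $F_{12}^{111}$ and applying the multifork isomorphism together with the definitions of the two $(1,1)$-twists in (11). Finally, for ``Flop + Flop = Twist'', composing $T_{21}^{12}\circ T_{12}^{21}$ as an iterated cone and converting merges to shifted adjoints via (9)--(10) produces a four-term convolution which, thanks to the split-$\mathbb{P}^2$-functor structure on $F_{3}^{12}$ from (4) and the $\mathrm{Fl}_3$-identification (6), collapses to the $\mathbb{P}$-twist around $F_{3}^{12}$; the symmetric argument handles $T_{12}^{21}\circ T_{21}^{12}$.
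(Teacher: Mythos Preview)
Your outline is broadly correct and follows the same general strategy as the paper, but there are two places where your sketch is either too vague or diverges from the paper in a way that hides real work.

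First, for the braid relation your plan to ``expand both sides as iterated cones and substitute the skein triangles to force a common symmetric term'' is not sufficient as stated. The paper does not attempt to match the two triple products directly; instead it invokes a specific criterion (Theorem~6.2 of \cite{AL2}): the braid relation holds provided a certain isomorphism between the convolutions of two explicit two-term twisted complexes \eqref{eqn-O1-definition} and \eqref{eqn-O2-definition} can be chosen to \emph{intertwine} the maps \eqref{eqn-map-O1-direct-sum} and \eqref{eqn-map-O2-direct-sum}. The skein triangles (7)--(8) are used to identify these convolutions with $F_{21}^{111}F_3^{21}R_{21}^3 R_{111}^{21}$ and $F_{12}^{111}F_3^{12}R_{12}^3 R_{111}^{12}$ respectively, and then one checks that the multifork isomorphism $\alpha\beta^R$ induces the required intertwining. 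This last step is a non-trivial diagram chase reducing to the fact that $\beta^R$ is the right dual of $\beta$. Your sketch does not isolate this intertwining condition, which is the genuine content of the argument.

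Second, your proposed direct verification of $T_{21}^{12}\circ D_{12}^{21}\simeq \id_{12}$ by ``composing the cone descriptions and collapsing via the skein triangles'' is not how the paper proceeds, and it is not clear your collapse would go through cleanly. The paper instead proves ``Flop + Flop = Twist'' first: since the $\mathbb{P}$-twist of $F_3^{12}$ is an autoequivalence, so are $T_{12}^{21}$ and $T_{21}^{12}$; then one observes that $\lambda'$ and $\mu'$ are the left duals of $\mu$ and $\lambda$, whence $D_{12}^{21}$ and $D_{21}^{12}$ are left adjoints of $T_{21}^{12}$ and $T_{12}^{21}$, and an adjoint of an equivalence is its inverse. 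This ordering of the argument avoids a delicate four-term convolution computation that your approach would require.
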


\begin{proof}

It suffices to prove that the four basic relations between the generators of $\gbrcat_3$ listed in \eqref{list-generalised-braid-relations-for-gbr3} hold under the assumptions of the theorem. The proofs for the relations obtained from these four by vertical, horizontal and blackboard reflections are identical. 
\\
\\
\textbf{$(\brTaaaCa,\brDaaaCa)$ and $(\brTaaaCb,\brDaaaCb)$ are pairs of mutually inverse equivalences.}

This follows from $F_{21}^{111}$ and $F_{12}^{111}$ being spherical functors, see \cite{AL2}, Theorem 5.1. 
\\
\\
\textbf{The pitchfork relation}

We need to show the existence of an isomorphism
\begin{equation*}
    F_{12}^{111} T_{21}^{12} \simeq T_{\crossn{1}\crossn{1}1}^{111} T_{1\crossn{1}\crossn{1}}^{111} F_{21}^{111}.
\end{equation*}

By definition we have
        \begin{align*}
            \brTaaaCa = \cone\left( F_{21}^{111} G_{111}^{21}[-1] \xrightarrow{\trace} \id_{111} \right), 
            \\
            \brTaaaCb = \cone\left( F_{12}^{111}G_{111}^{12}[-1] \xrightarrow{\trace} \id_{111} \right), 
        \end{align*}
        therefore $ T_{\crossn{1}\crossn{1}1}^{111} T_{1\crossn{1}\crossn{1}}^{111} F_{21}^{111}$ is isomorphic in $D(\mathcal{C}_{111}-\mathcal{C}_{111})$ to the convolution of the twisted complex
\begin{scriptsize}
\begin{equation*}
    F_{21}^{111}R_{111}^{21}F_{12}^{111}R_{111}^{12}F_{21}^{111} \xrightarrow{\left( \begin{smallmatrix} tr F_{12}^{111} R_{111}^{12} F_{21}^{111} \\ -F_{21}^{111}R_{111}^{21} tr F_{21}^{111}  \end{smallmatrix} \right)} F_{12}^{111}R_{111}^{12}F_{21}^{111} \oplus F_{21}^{111}R_{111}^{21}F_{21}^{111} \xrightarrow{\left( tr F_{21} \quad  tr F_{21}^{111} \right)}\underset{\degzero} {F_{21}^{111}}
\end{equation*}
\end{scriptsize}

By Lemma 5.10 of \cite{AL2},  between 
\[ \underset{ \degzero} {F_{21}^{111}R_{111}^{21}F_{21}^{111}} \xrightarrow{tr F_{21}^{111}} F_{21}^{111} \text{ and } F_{21}^{111} \xrightarrow{F_{21}^{111} act} \underset{ \degzero} {F_{21}^{111}R_{111}^{21}F_{21}^{111}}\] 

we have a homotopy equivalence of twisted complexes.

It follows by the Replacement Lemma  (\cite{AL3}, Lemma 2.1) that $ T_{\crossn{1}\crossn{1}1}^{111} T_{1\crossn{1}\crossn{1}}^{111} F_{21}^{111}$ is further isomorphic to the convolution of 
\begin{scriptsize}
\begin{equation*}
    F_{21}^{111}R_{111}^{21}F_{12}^{111}R_{111}^{12}F_{21}^{111} \oplus F_{21}^{111} \xrightarrow{\left( \begin{smallmatrix} tr F_{12}^{111} R_{111}^{12} F_{21}^{111} & 0 \\  -F_{21}^{111} R_{111}^{21} tr F_{21}^{111} & F_{21}^{111} act  \end{smallmatrix} \right)} F_{12}^{111} R_{111}^{12} F_{21}^{111}  \underset{ \degminusone}{\oplus} F_{21}^{111} R_{111}^{21} F_{21}^{111}.
\end{equation*}
\end{scriptsize}

Moreover, there exists a homotopy equivalence from $F_{21}^{111}F_{3}^{21}R_{21}^3$ to
\begin{scriptsize}
\begin{equation*}
    F_{21}^{111}R_{111}^{21}F_{12}^{111}R_{111}^{12}F_{21}^{111} \oplus F_{21}^{111} \xrightarrow{\left( -F_{21}^{111}R_{111}^{21} tr F_{21}^{111} \quad F_{21}^{111} act \right) } \underset{\degzero }{F_{21}^{111} R_{111}^{21} F_{21}^{111}}
\end{equation*}
\end{scriptsize}

whose $F_{21}^{111}F_{3}^{21}R_{21}^3 \to  F_{21}^{111}R_{111}^{21}F_{12}^{111}R_{111}^{12}F_{21}^{111}$ component is the map $F_{21}^{111} (\ref{eqn-n-3-skein-first-map})$.

It follows by the Replacement Lemma, that $ T_{\crossn{1}\crossn{1}1}^{111} T_{1\crossn{1}\crossn{1}}^{111} F_{21}^{111}$ is therefore isomorphic to the convolution of 

\begin{equation*}
    F_{21}^{111}F_{3}^{21}R_{21}^{3} \xrightarrow{tr F_{12}^{111}R_{111}F_{21}^{111}\circ F_{21}^{111} (\ref{eqn-n-3-skein-first-map})} \underset{\degminusone}{F_{12}^{111}R_{111}^{12}F_{21}^{111}}
\end{equation*}
and hence by the multifork isomorphism to the convolution of 

\begin{equation*}
    F_{12}^{111}F_{3}^{12}R_{21}^{3} \xrightarrow{tr F_{12}^{111}R_{111}F_{21}^{111}\circ F_{21}^{111} (\ref{eqn-n-3-skein-first-map})\circ multifork} \underset{\degminusone}{F_{12}^{111}R_{111}^{12}F_{21}^{111}}.
\end{equation*}

On the other hand, by the definition of $T_{21}^{12}$ the object $F_{12}^{111} T_{21}^{12}$ is isomorphic in $D(\mathcal{C}_{111}-\mathcal{C}_{111})$ to the convolution if the twisted complex

\begin{equation*}
    F_{12}^{111}F_{3}^{12}R_{21}^{3} \xrightarrow{F_{12}^{111} \mu } \underset{\degminusone}{F_{12}^{111}R_{111}^{12}F_{21}^{111}}.
\end{equation*}

And since 
\[ tr F_{12}^{111}R_{111}F_{21}^{111}\circ F_{21}^{111} (\ref{eqn-n-3-skein-first-map})\circ multifork = F_{12}^{111} \mu \]
we have

\begin{equation*}
    F_{12}^{111} T_{21}^{12} \simeq T_{\crossn{1}\crossn{1}1}^{111} T_{1\crossn{1}\crossn{1}}^{111} F_{21}^{111}.
\end{equation*}
\\
\\
\textbf{The braid relation}

Consider the following twisted complexes of enhanced functors: 
\begin{scriptsize}
\begin{align}
\label{eqn-O1-definition}
\underset{\degzero}{F_{21}^{111} R_{111}^{21} F_{12}^{111} R_{111}^{12}  F_{21}^{111} R_{111}^{21} \oplus F_{21}^{111} R_{111}^{21}}
& \xrightarrow{F_{21}^{111} R_{111}^{21} \trace F_{21}^{111} R_{111}^{21} \oplus F_{21}^{111} \action R_{111}^{21}}
F_{21}^{111} R_{111}^{21} F_{21}^{111} R_{111}^{21},
\\
\label{eqn-O2-definition}
\underset{\degzero}{F_{12}^{111} R_{111}^{12} F_{21}^{111} R_{111}^{21}  F_{12}^{111} R_{111}^{12} \oplus F_{12}^{111} R_{111}^{12}}
& \xrightarrow{F_{12}^{111} R_{111}^{12} \trace F_{12}^{111} R_{111}^{12} \oplus F_{12}^{111} \action R_{111}^{12}}
F_{12}^{111} R_{111}^{12} F_{12}^{111} R_{111}^{12}.
\end{align}
\end{scriptsize}

There are natural maps from both to $F_{21}^{111} R_{111}^{21} F_{12}^{111} R_{111}^{12} \oplus F_{12}^{111} R_{111}^{12}F_{21}^{111} R_{111}^{21}$ 
induced by the maps
\begin{equation}
\label{eqn-map-O1-direct-sum}
\begin{tikzcd}
F_{21}^{111} R_{111}^{21} F_{12}^{111} R_{111}^{12}  F_{21}^{111} R_{111}^{21}
\ar{d}{\trace F_{12}^{111} R_{111}^{12}  F_{21}^{111} R_{111}^{21} \oplus F_{21}^{111} R_{111}^{21} F_{12}^{111} R_{111}^{12} \trace}
\\
F_{21}^{111} R_{111}^{21} F_{12}^{111} R_{111}^{12} \oplus F_{12}^{111} R_{111}^{12}F_{21}^{111} R_{111}^{21}
\end{tikzcd}
\end{equation}
and
\begin{equation}
\label{eqn-map-O2-direct-sum}
\begin{tikzcd}
F_{12}^{111} R_{111}^{12} F_{21}^{111} R_{111}^{21}  F_{12}^{111} R_{111}^{12}
\ar{d}{ F_{12}^{111} R_{111}^{12} F_{21}^{111} R_{111}^{21} \trace \oplus \trace  F_{21}^{111} R_{111}^{21}  F_{12}^{111} R_{111}^{12}}
\\
F_{21}^{111} R_{111}^{21} F_{12}^{111} R_{111}^{12} \oplus F_{12}^{111} R_{111}^{12}F_{21}^{111} R_{111}^{21}.
\end{tikzcd}
\end{equation}
By \cite{AL2} Theorem 6.2, if there exists a $D(\C_{111}\text{-}\C_{111})$ isomorphism between the convolutions 
of the twisted complexes \eqref{eqn-O1-definition} and \eqref{eqn-O2-definition} which intertwines the maps 
induced by \eqref{eqn-map-O1-direct-sum} and \eqref{eqn-map-O2-direct-sum}, then 
the braid relation holds for $\brTaaaCa$ and $\brTaaaCb$.

Now, recall that we have the multifork isomorphism $\alpha\colon F_{21}^{111}F_{3}^{21} \xrightarrow{\sim} F_{12}^{111}F_{3}^{12}$. 
Let $\beta$ denote its inverse $F_{12}^{111}F_{3}^{12} \xrightarrow{\sim} F_{21}^{111}F_{3}^{21}$, and 
let $\beta^{R}\colon R_{21}^{3} R_{111}^{21}\xrightarrow{\sim} R_{12}^{3} R_{111}^{12}$. 

We claim that:

\begin{enumerate}
    \item[(a)] There exist homotopy equivalences from the objects $F_{21}^{111}F_3^{21} R_{21}^{3} R_{111}^{21}$
and $F_{12}^{111}F_3^{12} R_{12}^{3} R_{111}^{12}$ to the twisted complexes \eqref{eqn-O1-definition} and \eqref{eqn-O2-definition}
whose 
$$ F_{21}^{111}F_3^{21} R_{21}^{3} R_{111}^{21} \rightarrow F_{21}^{111} R_{111}^{21} F_{12}^{111} R_{111}^{12}  F_{21}^{111} R_{111}^{21}, $$
$$ F_{12}^{111}F_3^{12} R_{12}^{3} R_{111}^{12} \rightarrow F_{12}^{111} R_{111}^{12} F_{21}^{111} R_{111}^{21}  F_{12}^{111} R_{111}^{12}, $$
components are the maps 
$F_{21}^{111} \eqref{eqn-n-3-skein-first-map-21} R_{111}^{21}$ and  
$F_{12}^{111} \eqref{eqn-n-3-skein-first-map} R_{111}^{12}$.
    \item[(b)] The $D(\C_{111}\text{-}\C_{111})$ isomorphism between the convolutions of \eqref{eqn-O1-definition} and \eqref{eqn-O2-definition} which is induced via the isomorphisms provided by (a) from the isomorphisms 
$$ F_{21}^{111}F_{3}^{21} R_{21}^{3} R_{111}^{21} \xrightarrow{\alpha\beta^R} F_{12}^{111}F_{3}^{12}R_{12}^{3} R_{111}^{12}$$ 
is the requisite isomorphism intertwining the maps \eqref{eqn-map-O1-direct-sum} and \eqref{eqn-map-O2-direct-sum}.
\end{enumerate}

For (a), since \eqref{eqn-skein-relation21-no-crossing} fits into an exact triangle in $D(\C_{21}\text{-}\C_{21})$, there exist a homotopy equivalence of twisted complexes of the form
\begin{equation}
\label{eqn-from-F_21^111-F_3^21-R_21^3-R_111^21-to-S1O1R1-step1}
\begin{tikzcd}[column sep=4cm]
 \underset{\degzero}{F_{21}^{111}F_3^{21} R_{21}^{3} R_{111}^{21}}
 \ar{d}{F_{21}^{111} \eqref{eqn-n-3-skein-first-map-21} R_{111}^{21}}
 \ar{dr}{?}
 &
 \\
 \underset{\degzero}{F_{21}^{111} R_{111}^{21} F_{12}^{111} R_{111}^{12}  F_{21}^{111} R_{111}^{21}}
 \ar{r}{F_{21}^{111} \eqref{eqn-n-3-skein-second-map-21}R_{111}^{21}}
 &
 F_{21}^{111} R_{111}^{21}[-2]. 
\end{tikzcd}
\end{equation}
Since $F_{21}^{111}$ is split-spherical with cotwist [-3], there is a split exact triangle
$$ F_{21}^{111} R_{111}^{21}
\xrightarrow{F_{21}^{111} \action R_{111}^{21}}
 F_{21}^{111} R_{111}^{21} F_{21}^{111} R_{111}^{21}
\xrightarrow{\pi}
F_{21}^{111} R_{111}^{21}[-2].
$$
Therefore, the object $F_{21}^{111} R_{111}^{21}[-2]$ is homotopy equivalent to the twisted complex 
$ F_{21}^{111} R_{111}^{21}
\xrightarrow{F_{21}^{111} \action R_{111}^{21}}
 \underset{\degzero}{F_{21}^{111} R_{111}^{21} F_{21}^{111} R_{111}^{21}}$. It follows 
 by the Replacement Lemma that there exists a homotopy equivalence of form
 \begin{tiny}
 \begin{equation}
 \label{eqn-from-F_21^111-F_3^21-R_21^3-R_111^21-to-S1O1R1-step2}
\begin{tikzcd}[column sep=4.5cm]
 \underset{\degzero}{F_{21}^{111} R_{111}^{21} F_{12}^{111} R_{111}^{12}  F_{21}^{111} R_{111}^{21}}
 \ar{r}{F_{21}^{111} \eqref{eqn-n-3-skein-second-map-21}R_{111}^{21}}
 \ar{d}{\id \oplus 0}
 &
 F_{21}^{111} R_{111}^{21}[-2]
 \ar{d}{?}
 \ar{ld}{0 \oplus ?}
 \\
 \underset{\degzero}{F_{21}^{111} R_{111}^{21} F_{12}^{111} R_{111}^{12} F_{21}^{111} R_{111}^{21} \oplus F_{21}^{111} R_{111}^{21}}
 \ar{r}[']{F_{21}^{111} R_{111}^{21} \trace F_{21}^{111} R_{111}^{21} \oplus F_{21}^{111} \action R_{111}^{21}}
 &
 F_{21}^{111} R_{111}^{21} F_{21}^{111} R_{111}^{21}
\end{tikzcd}
\end{equation}
\end{tiny}
The composition of \eqref{eqn-from-F_21^111-F_3^21-R_21^3-R_111^21-to-S1O1R1-step1} and \eqref{eqn-from-F_21^111-F_3^21-R_21^3-R_111^21-to-S1O1R1-step2} is the desired homotopy equivalence from $F_{21}^{111}F_3^{21} R_{21}^{3} R_{111}^{21}$ to the twisted complex \eqref{eqn-O1-definition}. 

Therefore,  we have proven (a) for the twisted complex \eqref{eqn-O1-definition}; the proof for \eqref{eqn-O2-definition} is similar.

For (b), we notice that the target of the maps is the direct sum 
$$ F_{21}^{111} R_{111}^{21} F_{12}^{111} R_{111}^{12} \oplus F_{12}^{111} R_{111}^{12}F_{21}^{111} R_{111}^{21}. $$
We prove that the isomorphism intertwines the components of \eqref{eqn-map-O1-direct-sum} and \eqref{eqn-map-O2-direct-sum}
which go into the second direct summand. The proof that it intertwines the first direct summand components is similar. 

It suffices to show that the following diagram commutes in $D(\C_{111}\text{-}\C_{111})$:
\begin{tiny}
\begin{equation*}
\begin{tikzcd}[column sep=3cm, row sep=1cm]
F_{21}^{111}F_3^{21} R_{21}^{3} R_{111}^{21}
\ar{r}{\alpha\beta^R}
\ar[']{d}{F_{21}^{111} \eqref{eqn-n-3-skein-first-map-21} R_{111}^{21}}
&
F_{12}^{111}F_{3}^{12}R_{12}^{3} R_{111}^{12}
\ar{d}{F_{12}^{111} \eqref{eqn-n-3-skein-first-map} R_{111}^{12}}
\\
F_{21}^{111} R_{111}^{21} F_{12}^{111} R_{111}^{12}  F_{21}^{111} R_{111}^{21}
\ar[']{d}{\trace F_{12}^{111} R_{111}^{12}  F_{21}^{111} R_{111}^{21}}
&
F_{12}^{111} R_{111}^{12} F_{21}^{111} R_{111}^{21}  F_{12}^{111} R_{111}^{12}
\ar{d}{F_{12}^{111} R_{111}^{12} F_{21}^{111} R_{111}^{21} \trace}
\\
F_{12}^{111} R_{111}^{12}  F_{21}^{111} R_{111}^{21}
\ar[equals]{r}
&
F_{12}^{111} R_{111}^{12}  F_{21}^{111} R_{111}^{21}.
\end{tikzcd}
\end{equation*}
\end{tiny}
This can be simplified to:
\begin{tiny}
\begin{equation*}
\begin{tikzcd}[column sep=3cm, row sep=1cm]
F_{21}^{111}F_3^{21} R_{21}^{3} R_{111}^{21}
\ar{r}{\alpha\beta^R}
\ar[']{d}{F_{21}^{111}F_3^{21} R_{21}^{3} \action R_{111}^{21}}
&
F_{12}^{111}F_{3}^{12}R_{12}^{3} R_{111}^{12}
\ar{d}{F_{12}^{111} \action F_{3}^{12}R_{12}^{3} R_{111}^{12}}
\\
F_{21}^{111}F_3^{21} R_{21}^{3} R_{111}^{21} F_{21}^{111} R_{111}^{21}
\ar[']{d}{\alpha \beta^R F_{21}^{111} R_{111}^{21}}
&
F_{12}^{111} R_{111}^{12} F_{12}^{111} F_{3}^{12}R_{12}^{3} R_{111}^{12}
\ar{d}{F_{12}^{111} R_{111}^{12} \beta \alpha^R}
\\
F_{12}^{111} F_3^{12} R_{12}^{3} R_{111}^{12} F_{21}^{111} R_{111}^{21}
\ar[']{d}{F_{12}^{111} \trace R_{111}^{12} F_{21}^{111} R_{111}^{21}}
&
F_{21}^{111} R_{111}^{21} F_{21}^{111} F_{3}^{21}R_{12}^{3} R_{111}^{12}
\ar{d}{F_{21}^{111} R_{111}^{21} F_{21}^{111} \trace R_{111}^{12}}
\\
F_{12}^{111} R_{111}^{12}  F_{21}^{111} R_{111}^{21}
\ar[equals]{r}
&
F_{12}^{111} R_{111}^{12}  F_{21}^{111} R_{111}^{21}.
\end{tikzcd}
\end{equation*}
\end{tiny}
The commutativity of this diagram reduces to the commutativity of the
diagram
\begin{tiny}
\begin{equation*}
\begin{tikzcd}[column sep=1cm, row sep=1cm]
&
F_{12}^{111}F_3^{12} R_{21}^{3} R_{111}^{21}
\ar[']{ld}{F_{12}^{111}F_3^{12} R_{21}^{3} \action R_{111}^{21}}
\ar{rd}{F_{12}^{111} \action F_{3}^{12} R_{21}^{3} R_{111}^{21}}
&
\\
F_{12}^{111} F_3^{12} R_{21}^{3} R_{111}^{21} F_{21}^{111} R_{111}^{21}
\ar[']{d}{F_{12}^{111} F_3^{12} \beta^R F_{21}^{111} R_{111}^{21}}
&
&
F_{12}^{111} R_{111}^{12} F_{12}^{111} F_3^{12} R_{21}^{3} R_{111}^{21}
\ar{d}{F_{21}^{111} R_{111}^{21} \beta R_{21}^{3} R_{111}^{21}}
\\
F_{12}^{111} F_3^{12} R_{12}^{3} R_{111}^{12} F_{21}^{111} R_{111}^{21}
\ar[']{dr}{F_{12}^{111} \trace R_{111}^{12} F_{21}^{111} R_{111}^{21}}
&
&
F_{12}^{111} R_{111}^{12} F_{21}^{111} F_3^{21} R_{21}^{3} R_{111}^{21}
\ar{dl}{F_{12}^{111} R_{111}^{12} F_{21}^{111} \trace R_{111}^{21}}
\\
&
F_{12}^{111} R_{111}^{12}  F_{21}^{111} R_{111}^{21}.
&
\end{tikzcd}
\end{equation*}
\end{tiny}
and then further to the commutativity of 
\begin{tiny}
\begin{equation*}
\begin{tikzcd}[column sep=1.5cm, row sep=1cm]
&
F_{12}^{111}F_3^{12} R_{21}^{3} R_{111}^{21}
\ar[']{ld}{F_{12}^{111}F_3^{12} \beta^R}
\ar{rd}{\beta R_{21}^{3} R_{111}^{21}}
&
\\
F_{12}^{111} F_3^{12} R_{12}^{3} R_{111}^{12} 
\ar[']{dr}{\trace}
&
&
 F_{21}^{111} F_{3}^{21} R_{21}^{3} R_{111}^{21}
\ar{dl}{\trace}
\\
&
\id_{111},
&
\end{tikzcd}
\end{equation*}
\end{tiny}
which commutes since $\beta^R$ is the right dual of $\beta$.
\\
\\
\textbf{Flop+flop=twist}

We only prove the first of the ``flop-flop = twist'' relations, the second is proved identically. 
We need to show that 
\begin{equation}
\label{eqn-T2T1-initial-tensor}
\{F_{3}^{12} R_{21}^3 \xrightarrow{\mu} \underset{\degzero}{R_{111}^{12} F_{21}^{111}} \}
\{F_{3}^{21} R_{12}^3 \xrightarrow{\lambda} \underset{\degzero}{R_{111}^{21} F_{12}^{111}}\}
[2]
\end{equation}
is isomorphic to 
$$ F_{3}^{12} R_{12}^3 [-2] \xrightarrow{\psi}  F_{3}^{12} R_{12}^3 \xrightarrow{\trace} \id_{\C_{12}}. $$

The tensor of the product of convolutions in $\eqref{eqn-T2T1-initial-tensor}$ is isomorphic to 
the convolution of the twisted complex
\begin{small}
\begin{equation}
\label{eqn-T2T1-expanded-tensor}
\left(\begin{tikzcd}
F_{3}^{12} R_{21}^3 F_{3}^{21} R_{12}^3 
\ar{d}{\left( \begin{smallmatrix} \mu F_{3}^{21} R_{12}^3 \\ - F_{3}^{12} R_{21}^3 \lambda \end{smallmatrix} \right)}
\\
R_{111}^{12} F_{21}^{111} F_{3}^{21} R_{12}^3
\oplus
F_{3}^{12} R_{21}^3 R_{111}^{21} F_{12}^{111}
\ar{d}{\left( R_{111}^{12} F_{21}^{111} \lambda \quad \mu R_{111}^{21} F_{12}^{111} \right)}
\\
\underset{\degzero}{ R_{111}^{12} F_{21}^{111} R_{111}^{21} F_{12}^{111}}.
\end{tikzcd}\right)[2]
\end{equation}
\end{small}
Since $F_{21}^{111}$ and $F_{12}^{111}$ are split spherical with cotwist $[-3]$, we have 
$$ R_{111}^{12} F_{21}^{111} F_{3}^{21} R_{12}^3 \simeq R_{111}^{12} F_{12}^{111} F_{3}^{12} R_{12}^3 \simeq 
F_{3}^{12} R_{12}^3 \oplus F_{3}^{12} R_{12}^3[-2], $$ 
$$ F_{3}^{12} R_{21}^3 R_{111}^{21} F_{12}^{111} \simeq F_{3}^{12} R_{12}^3 R_{111}^{12} F_{12}^{111} \simeq 
F_{3}^{12} R_{12}^3 \oplus F_{3}^{12} R_{12}^3[-2], $$ 
Since $F_{3}^{21}$ is split $\mathbb{P}^2$-functor with $H=[-2]$ we also have 
$$ F_{3}^{12} R_{21}^3 F_{3}^{21} R_{12}^3 \simeq F_{3}^{12} R_{12}^3 \oplus F_{3}^{12} R_{12}^3[-2] \oplus F_{3}^{12} R_{12}^3 [-4].$$
Thus three out of four objects in the twisted complex \eqref{eqn-T2T1-expanded-tensor} are isomorphic to direct sums of shifted copies 
of $F_2R_2$. Under these identifications \eqref{eqn-T2T1-expanded-tensor} becomes:
\begin{small}
\begin{equation}
\label{eqn-T2T1-expanded-tensor-rewritten-with-direct-sums}
\begin{tikzcd}[row sep=0.9cm]
F_{3}^{12} R_{12}^3[2] \oplus F_{3}^{12} R_{12}^3 \oplus F_{3}^{12} R_{12}^3 [-2]
\ar{dd}{\begin{small}\begin{pmatrix}
\id \amsamp 0 \amsamp 0 \\
0 \amsamp \id \amsamp \psi_1 \\
-\id \amsamp 0 \amsamp  0\\
0 \amsamp -\id \amsamp - \psi_2
 \end{pmatrix}\end{small}}
\\
\\
\left( F_{3}^{12} R_{12}^3[2] \oplus F_{3}^{12} R_{12}^3 \right)
\oplus
\left(  F_{3}^{12} R_{12}^3[2] \oplus F_{3}^{12} R_{12}^3  \right)
\ar{d}{\begin{small}\begin{pmatrix}
\eqref{eqn-n-3-skein-first-map} \amsamp \phi_1 \amsamp
\eqref{eqn-n-3-skein-first-map} \amsamp \phi_2
\end{pmatrix}\end{small}}
\\
\underset{\degzero}{ R_{111}^{12} F_{21}^{111} R_{111}^{21} F_{12}^{111}}[2]
\end{tikzcd},
\end{equation}
\end{small}
where $\psi_1, \psi_2$ are the compositions 
$$ F_{3}^{12} R_{12}^3 [-2] \hookrightarrow F_{3}^{12} R_{12}^3 F_{3}^{12} R_{12}^3 \xrightarrow{\trace F_{3}^{12} R_{12}^3, \quad F_{3}^{12} R_{12}^3\trace}  
F_{3}^{12} R_{12}^3,$$ 
and $\phi_1$ and $\phi_2$ are the compositions
\begin{small}
\begin{equation*}
\begin{tikzcd}
F_{3}^{12} R_{12}^3
\ar[hook]{d}
\\
R_{111}^{12} F_{12}^{111} F_{3}^{12} R_{12}^3[2]
\ar{d}{R_{111}^{12} (\text{multifork}) R_{12}^3}
\\
R_{111}^{12} F_{21}^{111} F_{3}^{21} R_{12}^3[2]
\ar{d}{R_{111}^{12} F_{21}^{111} \;\action\; F_{3}^{21} R_{12}^3}
\\
R_{111}^{12} F_{21}^{111} R_{111}^{21} F_{21}^{111} F_{3}^{21} R_{12}^3[2]
\ar{d}{R_{111}^{12} F_{21}^{111} R_{111}^{21}  (\text{multifork}) R_{12}^3}
\\
R_{111}^{12} F_{21}^{111} R_{111}^{21} F_{12}^{111} F_{3}^{12} R_{12}^3[2]
\ar{d}{R_{111}^{12} F_{21}^{111} R_{111}^{21} F_{12}^{111} \;\trace}
\\
R_{111}^{12} F_{21}^{111} R_{111}^{21} F_{12}^{111} [2]
\end{tikzcd} 
\text{ and }
\begin{tikzcd}
F_{3}^{12} R_{12}^3
\ar[hook]{d}
\\
F_{3}^{12} R_{12}^3 R_{111}^{12} F_{12}^{111} [2]
\ar{d}{R_{111}^{12} (\text{multifork}) R_{12}^3}
\\
F_{3}^{12} R_{21}^3 R_{111}^{21} F_{12}^{111} [2]
\ar{d}{F_{3}^{12} R_{21}^3 \;\action\; R_{111}^{21} F_{12}^{111}}
\\
F_{3}^{12} R_{21}^3 R_{111}^{21} F_{21}^{111} R_{111}^{21} F_{12}^{111}[2]
\ar{d}{ F_{3}^{12} (\text{multifork}) F_{21}^{111} R_{111}^{21} F_{12}^{111}}
\\
F_{3}^{12} R_{12}^3 R_{111}^{12} F_{21}^{111} R_{111}^{21} F_{12}^{111}[2]
\ar{d}{\trace\; R_{111}^{12} F_{21}^{111} R_{111}^{21} F_{12}^{111}}
\\
R_{111}^{12} F_{21}^{111} R_{111}^{21} F_{12}^{111}[2]
\end{tikzcd} 
\end{equation*}
\end{small}
Note that since the map \eqref{eqn-n-3-skein-second-map} is the composition 
$$ R_{111}^{12} F_{21}^{111} R_{111}^{21} F_{12}^{111}
\xrightarrow{R_{111}^{12} \trace F_{12}^{111}} 
R_{111}^{12} F_{12}^{111}
\twoheadrightarrow{\id_{111}[-2]}, $$
we have the following equality of maps $F_{3}^{12} R_{12}^3
\rightarrow \id_{3}$:
$$ \phi_1 \circ \eqref{eqn-n-3-skein-second-map} = \trace = 
\phi_2 \circ \eqref{eqn-n-3-skein-second-map}. $$

The map $\psi$ in the definition of the $\mathbb{P}$-twist of $F_{3}^{12}$ is $\psi_1 - \psi_2$. 
Changing the basis of the middle term of \eqref{eqn-T2T1-expanded-tensor-rewritten-with-direct-sums} 
to the diagonal and the antidiagonal we obtain:
\begin{small}
\begin{equation}
\label{eqn-T2T1-expanded-tensor-rewritten-with-direct-sums-after-change-of-basis}
\begin{tikzcd}[row sep=0.9cm]
F_{3}^{12} R_{12}^3[2] \oplus F_{3}^{12} R_{12}^3 \oplus F_{3}^{12} R_{12}^3 [-2]
\ar{dd}{\begin{small}\begin{pmatrix}
0 \amsamp 0 \amsamp 0 \\
0 \amsamp 0\amsamp \psi \\
2\id \amsamp 0 \amsamp  0\\
0 \amsamp 2\id \amsamp \psi_1 + \psi_2
 \end{pmatrix}\end{small}}
\\
\\
\left( F_{3}^{12} R_{12}^3[2] \oplus F_{3}^{12} R_{12}^3 \right)
\oplus
\left(  F_{3}^{12} R_{12}^3[2] \oplus F_{3}^{12} R_{12}^3  \right)
\ar{d}{\begin{small}\begin{pmatrix}
2 \eqref{eqn-n-3-skein-first-map} \amsamp \phi_1 + \phi_2 \amsamp  0
\amsamp \phi_1 - \phi_2 
\end{pmatrix}\end{small}}
\\
\underset{\degzero}{ R_{111}^{12} F_{21}^{111} R_{111}^{21} F_{12}^{111}}[2].
\end{tikzcd}
\end{equation}
\end{small}
We can remove the following acyclic subcomplex of \eqref{eqn-T2T1-expanded-tensor-rewritten-with-direct-sums-after-change-of-basis}
$$
F_{3}^{12} R_{12}^3[2] \oplus F_{3}^{12} R_{12}^3
\xrightarrow{ \begin{small}\begin{pmatrix}
2\id \amsamp 0
\\
0 \amsamp 2\id 
\end{pmatrix}\end{small}}
\underset{\degminusone}{F_{3}^{12} R_{12}^3[2] \oplus F_{3}^{12} R_{12}^3}
$$
using the Replacement Lemma. 
Since the subcomplex has no external arrows emerging from its degree $-2$ part, 
no other differentials are affected. We obtain:
\begin{small}
\begin{equation}
\label{eqn-T2T1-expanded-tensor-rewritten-with-direct-sums-first-extraction}
\begin{tikzcd}[column sep=0.9cm]
F_{3}^{12} R_{12}^3 [-2]
\ar{r}{\begin{small}\begin{pmatrix}
0 \\
\psi \\
\end{pmatrix}\end{small}}
&
F_{3}^{12} R_{12}^3[2] \oplus F_{3}^{12} R_{12}^3 
\ar{rr}{\begin{small}\begin{pmatrix}
2 \eqref{eqn-n-3-skein-first-map} \amsamp \phi_1 + \phi_2 
\end{pmatrix}\end{small}}
& &
\underset{\degzero}{ R_{111}^{12} F_{21}^{111} R_{111}^{21} F_{12}^{111}}[2].
\end{tikzcd}
\end{equation}
\end{small}
Since \eqref{eqn-skein-relation-no-crossing} fits into an exact
triangle, we have a homotopy equivalence of form 
\begin{equation}
\begin{tikzcd}
F_{3}^{12} R_{12}^3[2]  
\ar{r}{2 \eqref{eqn-n-3-skein-first-map}} 
\ar{dr}{*}
&
R_{111}^{12} F_{21}^{111} R_{111}^{21} F_{12}^{111}[2] 
\ar{d}{\frac{1}{2} \eqref{eqn-n-3-skein-second-map}}
\\
&
\underset{\degzero}{\id_{12}}
\end{tikzcd}
\end{equation}
By the Replacement Lemma, we finally obtain:
\begin{small}
\begin{equation}
\label{eqn-T2T1-expanded-tensor-rewritten-with-direct-sums-final}
\begin{tikzcd}[column sep=1.25cm]
F_{3}^{12} R_{12}^3 [-2]
\ar{r}{ \psi }
&
F_{3}^{12} R_{12}^3[2] 
\ar{rr}{\frac{1}{2} \eqref{eqn-n-3-skein-second-map} \circ (\phi_1 +
\phi_2)}
& &
\underset{\degzero}{\id_{12}}. 
\end{tikzcd}
\end{equation}
\end{small}
Finally, since $\eqref{eqn-n-3-skein-second-map} \circ \phi_i = \trace$, 
this is homotopy equivalent to the $\mathbb{P}$-twist of $F_{3}^{12}$:
\begin{small}
\begin{equation}
\label{eqn-T2T1-ptwist}
\begin{tikzcd}[column sep=0.9cm]
F_{3}^{12} R_{12}^3 [-2]
\ar{r}{ \psi }
&
F_{3}^{12} R_{12}^3[2] 
\ar{rr}{\trace}
& &
\underset{\degzero}{\id_{12}}. 
\end{tikzcd}
\end{equation}
\end{small}

\textbf{$(T_{21}^{12},D_{12}^{21})$ and $(T_{21}^{12},D_{12}^{21})$ are pairs of mutually inverse equivalences.}

This follows from the ``twist-twist=flop'' relations.

Indeed,  
$T_{21}^{12} T_{12}^{21}$ and $T_{12}^{21} T_{21}^{12}$ are isomorphic to $\mathbb{P}$-twists of $F_3^{12}$ and $F_3^{21}$ and hence 
are both autoequivalences. Therefore $T_{12}^{21}$ and $T_{21}^{12}$ are also autoequivalences. On the other hand, 
the maps $\lambda'$ and $\mu'$ which define the functors $D_{12}^{21}$ and $D_{21}^{12}$ are the left duals 
of the maps $\mu$ and $\lambda$ which define the functors $T_{21}^{12}$ and $T_{12}^{21}$. Hence $D_{12}^{21}$ and $D_{21}^{12}$
are the left adjoints of $T_{21}^{12}$ and $T_{12}^{21}$. 
\end{proof}

\section{A  split-$\mathbb{P}^2$ action of $ \gbrcat_3 $ on $T^*\Fl_3(\bar{i})$ }
\label{flag_act_sec}

The aim of the second part of the paper is to define a network of categories and functors that satisfies the assumptions of Theorem \ref{Theorem1}.

In particular, we construct such a network on $T^*\Fl_3(\bar{i})$ and we prove it verifies all the hypothesis for a  split $\mathbb{P}^2$ representation of $GBr_3$.

\subsection{Notations and useful results}

In this section we fix some notation and recall some useful results.

In the following, let $D^b(X)$ be the derived category of coherent sheaves on a smooth quasi-projective variety $X$ and assume as before all the functors derived, i.e. we omit $\mathbf{R}$ and $\mathbf{L}$.

We omit the pushforward $i_*$ applied to structure sheaves when $i$ is an embedding and the pullbacks applied to line bundles. We write $\mathcal{O}_{X\times_Z Y} (D_1, D_2)$ for the line bundle $\mathcal{O}_X (D_1)\boxtimes \mathcal{O}_Y (D_2) $ on the fiber product $X \times_Z Y$ when $D_1$ and $D_2$ are divisors respectively in $X$ and $Y$.

Recall (see section  1.2 of \cite{CG}) the description the total space $T^* \Fla _n (\bar{i} ) $ of the cotangent bundle of the  flag $\Fl_n(\bar{i})$ as a Springer resolution:

\[  \cota \Fla _n (\bar{i} ) =  \left\{(V_{\bar{i}} ,\alpha)\;  \middle \vert \; \alpha:\mathbb{C}^n \to \mathbb{C}^n ; \alpha(V_{i_k}) \subset V_{i_{k-1}}  \right\}. \]

With a little abuse of notation we write

\begin{align*}
 T^* \Fla _n (\bar{i} )=\left\{
\xymatrix@C=0.1em{
0 &\subset & V_{i_1} \ar_{\alpha}@/_1em/[ll] & \subset  & \dots \ar_{\alpha}@/_1em/[ll]& \subset & V_{i_{k-1}} \ar_{\alpha}@/_1em/[ll] & \subset &\mathbb{C}^n
\ar_{\alpha}@/_1em/[ll]}  \right\}.
\end{align*}
Similarly, we denote with

\begin{align*}
 \left\{
\xymatrix@C=0.1em{
0 &\subset & V_{\lambda_1} \ar_{\alpha}@/_1em/[ll] & \subset  & \dots \ar_{\alpha}@/_1em/[ll]& \subset & V_{\lambda_{k-1}} \ar_{\alpha}@/_1em/[ll] & \subset &\mathbb{C}^n
\ar_{\alpha}@/_1em/[ll]}  \right\}
\times \left\{
\xymatrix@C=0.1em{
0 &\subset & V_{j_1} \ar_{\alpha}@/_1em/[ll] & \subset  & \dots \ar_{\alpha}@/_1em/[ll]& \subset & V_{j_{h-1}} \ar_{\alpha}@/_1em/[ll] & \subset &\mathbb{C}^n
\ar_{\alpha}@/_1em/[ll]}  \right\}
\end{align*}

the subspace of $T^* \Fla _n (\bar{i} ) \times T^* \Fla _n (\bar{j} )$ where the maps $\alpha$s need to satisfy  $\alpha(V_{i_k}) \subset V_{i_{k-1}} $ and $\alpha(V_{j_k}) \subset V_{j_{k-1}} $.

We use  $\mathcal{V}_i$ for the pullback on $\Fl_n$ of the tautological bundle of $\mathrm{Gr}(i,n)$.

Recall that if $X$ and $Y$ are quasi-projective subvarieties of $Z$ such that their intersection $X\cap Y$  is a Cartier divisor in $Y$ then we have the short exact sequence of coherent sheaves of $Z$
\begin{equation}\label{SES}
    0 \to \mathcal{O}_Y(-X\cap Y) \to \mathcal{O}_{X\cup Y} \to  \mathcal{O}_X \to 0.
\end{equation}

Moreover, note that if $f: X \to Y$ is proper, then we have the following adjunction of derived functors 

\begin{equation}\label{adjunction}
 f^* \dashv  f_* \dashv f^! 
\end{equation}
where  $f^! = f^*(-)\otimes \omega_{X/Y} [dim Y- dim X]$.

In what follows, we make extensive use of the theory of Fourier-Mukai transforms (\cite{Hu} as reference).

If $f$ is a divisorial embedding we  write $\mathcal{O}_{X\times_X Y} \in D^b(X\times Y)$ and $\mathcal{O}_{Y \times_X X} \in D^b(Y \times X)(X,0)[-1]$ for the Fourier-Mukai kernels representing respectively $f_*$ and $f^!$.

If $f$ is a fibration we  write $\mathcal{O}_{X\times_X Y} \in D^b(X\times Y)$ and $\mathcal{O}_{Y \times_X X} \in D^b(Y \times X)$ for the Fourier-Mukai kernels representing respectively $f_*$ and $f^*$.

We write moreover $\mathcal{O}_{X\times_X X} \in D^b(X\times X)$ for the Fourier-Mukai kernel representing the identity on $X$.

Furthermore, recall how the composition of two Fourier-Mukai transforms works.
Let $X,Y,Z$ be separated schemes of finite type over $k$,  $E_1 \in D_{qcoh}(X \times Y)$ and $E_2\in D_{qcoh}(Y \times Z)$.

The composition of the Fourier-Mukai trasforms $\Phi_{E_2} \circ \Phi_{E_1}$ is isomorphic to the Fourier-Mukai transform $\Phi_{E_1\star E_2}$ with kernel the convolution of the kernels $E_1$, $E_2$ defined

\begin{equation} \label{FM-comp}
E_2 \star E_1 = \pi_{13*} ( \pi_{12}^* E_1 \otimes \pi_{23}^* E_2 )
\end{equation}

where $\pi_{12}$, $\pi_{23}$ and $ \pi_{13} $ are the natural projections

\[
\begindc{\commdiag}[250]

\obj(1,3)[d]{$X\times Y$}
\obj(3,3)[e]{$X \times Z$}
\obj(5,3)[f]{$Y \times Z$}
\obj(3,5)[g]{$X \times Y \times Z$}
\mor{g}{d}{$\pi _{12}$}[-1,0]
\mor{g}{e}{$\pi _{13} $}
\mor{g}{f}{$\pi _{23} $}

\enddc
\]
(See section 5.1 of \cite{Hu}).

In order to deal with multiple convolutions, we recall some technology involving the derived tensor product of two structure sheaves.

Let $Z_1,Z_2$ be two locally complete intersection subvarieties of a smooth algebraic variety $Z$ with their intersection $W=Z_1 \cap Z_2$ being a locally complete subvariety of $Z_1$ and $Z_2$.

The excess bundle $\mathcal{E}_W$ of the intersection $W=Z_1 \cap Z_2$, is the locally free sheaf which fits in the short exact sequence of sheaves on $W$
\begin{equation}
    0 \longrightarrow \mathcal{N}_{W/Z} \longrightarrow j_1^*  \mathcal{N}_{Z_1/Z} \oplus j_2^* \mathcal{N}_{Z_2/Z} \longrightarrow \mathcal{E}_W \longrightarrow 0
\end{equation}
where $j_i$, $i=1,2$, is the inclusion of of $W$ in $Z_i$.

Under these hypotheses, the cohomology sheaves of  $i_2^* i_{1*}\mathcal{O}_{Z_1}\in D^b(Z_2)$ are 
\[H^{-q}  (i_2^* i_{1*} \mathcal{O}_{Z_1})= j_{1*}( \bigwedge ^q \mathcal{E}_W^\vee )\] 
where $j_i$, $i=1,2$, is the inclusion of $Z_1$ in the ambient space $Z$ (See \cite{Sc}).

In particular, if the intersection $W = Z_1  \cap Z_2 $ is transverse then we have the following isomorphism

\begin{equation} \label{transverse}
\mathcal{O}_{Z_1}  \otimes \mathcal{O}_{Z_2} \simeq \mathcal{O}_W.
\end{equation}

\subsection{Our Setup}

Let $C$ be our ambient variety the total space of the cotangent bundle  of $Fl_3$
\[C= T^* Fl_3. \]

Let $A$ and $E$ be the quasi-projective varieties defined as the total space of the cotangent bundle of $\mathbb{P}^2$ and   $\mathbb{P}^{2\vee}$:

\[ A= T^* \mathbb{P}^2 \qquad E = T^* \mathbb{P}^{2\vee}. \]

Let $B$ and $D$ be the quasi-projective varieties defined as the total space of the pullback via $p_1$ and $p_2$ on $Fl_3$ of the cotangent bundle of $\mathbb{P}^2$ and   $\mathbb{P}^{2\vee}$:

\[ B= p_1^* T^* \mathbb{P}^2 \qquad D = p_2^* T^* \mathbb{P}^{2\vee}. \]
We have therefore the following diagram

\begin{equation*} \label{pflag4}
\begindc{\commdiag}[150]
\obj(1,1)[a]{$A$}
\obj(1,4)[b]{$B$}
\obj(6,4)[c]{$C$}
\obj(10,4)[d]{$D$}
\obj(10,1)[e]{$E$}
\obj(1,-2)[f]{$ \mathbb{P}^2 $}
\obj(10,-2)[g]{$ \mathbb{P}^{2\vee} $}
\obj(6,-2)[h]{$ pt $}

\mor{b}{a}{$\pi _A$}[0,8]
\mor{b}{c}{$i_B$}[-1,6]
\mor{d}{c}{$i_D$}[0,6]
\mor{d}{e}{$\pi _E$}[0,8]
\mor{f}{a}{$i_{\mathbb{P}^{2}}$}[-1,6]
\mor{g}{e}{$i_{\mathbb{P}^{2\vee}}$}[0,6]
\mor{g}{h}{$\pi_{pt1}$}[0,8]
\mor{f}{h}{$\pi_{pt2}$}[-1,8]
\enddc
\end{equation*}

where $i_B$, $i_D$,$i_{\mathbb{P}^{2\vee}}$ and $i_{\mathbb{P}^{2}}$ are divisorial inclusions and $B \xrightarrow{\pi_A} A$ and  $D \xrightarrow{\pi_E} E$ are $\mathbb{P}^1$ bundles.

\begin{definition}\label{categories}
We define the categories $\C_{(3)}=D^b(pt)$, $\C_{(1,2)}=D^b(A)$, $\C_{(2,1)}=D^b(E)$, and $\C_{(1,1,1)}=D^b(C)$.
\end{definition}

Notice that we have the following descriptions of  the quasi-projective varieties

\begin{align*}
 A= \left\{
\xymatrix@C=0.1em{
0 &\subset & V_1 \ar_{\alpha}@/_2em/[ll] & \subset   & \mathbb{C}^3 \ar_{\alpha}@/_2em/[ll] } ;\ \dim (V_1)=1 \right\},
\end{align*} 
and
\begin{align*}
E=\left\{
\xymatrix@C=0.1em{
0  & \subset  & V_2 \ar_{\alpha}@/_2em/[ll]& \subset & \mathbb{C}^3  \ar_{\alpha}@/_2em/[ll] } ;\ \dim (V_2)=2 \right\} .
\end{align*} 
 
Moreover the four dimensional subvariety of $C$  defined as the (transverse) intersection of $B$ and $D$ can be described as 
 \begin{align*}
B \cap D= \left\{
\xymatrix@C=0.1em{
0 &\subset V_1 \subset  & V_2 \ar_{\alpha}@/_2em/[ll]& \subset & \mathbb{C}^3  \ar_{\alpha}@/_-2em/[lll] }  \right\}.
\end{align*} 

equipped with the two natural forgetful maps

\begin{equation*}
\begindc{\commdiag}[200]
\obj(0,0)[a]{$ A $}
\obj(2,0)[b]{$ E $}
\obj(1,2)[c]{$ B \cap D $}
\mor{c}{a}{$ q_1 $}[-1,0]
\mor{c}{b}{$ q_2 $}[1,0]
\enddc 
\end{equation*}

where $q_i$ is the map that forgets the choice of the $n-i$-th space.

Both $q_1$ and $q_2$ are isomorphic to the blow-up of the zero section carved out by $\{ \alpha =0  \}$ in respectively $A$ and $E$.

Both the bow-ups have the same exceptional divisor $Fl_3$ carved out by $\{ \alpha =0  \}$ and  the resulting birational transformations
\[ q_2 \circ q_1^{-1} : A \dasharrow E \]
\[ q_1 \circ q_2^{-1} : E \dasharrow A \]
 are a local model of a four dimensional Mukai flop.

\section{A split-$\mathbb{P}^2$ action of $\gbrcat_3$ on $T^*\Fl_3(\bar{i})$: generators}
\label{cat_act_flag_gen_sec}
In this section, we  define the generators of split $\mathbb{P}^2$ action of $GBr_3$, the forks and the merges.

After describing them as Fourier-Mukai transforms, we compute some of their convolutions which will be useful for our purpose.

In the following sections, we use $D^b(pt)$, $D^b(A)$, $D^b(E)$, $D^b(C)$ for our setup instead of the notation of Definition \ref{categories} for convenience.  

\begin{definition}
We define the  first fork functor 
\begin{equation} 
 F^{111}_{21} = i_{D*} \circ  \pi_E^*: D^b(E) \to D^b (C) 
\end{equation}
and define the second fork functor  
\begin{equation} 
 F^{111}_{12} = i_{B*} \circ  \pi_A^*: D^b(A) \to D^b (C). 
\end{equation}

 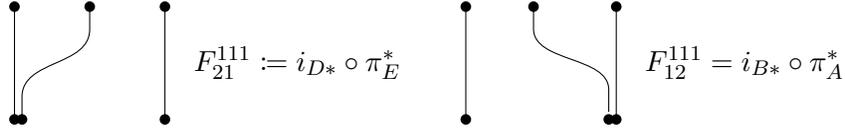
\begin{figure}[!h] \centering 
\begin{tikzpicture}

\braid [number of strands=3,
style strands={1,3}{opacity=0}, height=1 cm, width=0.9cm]at (2.1,-1) a_1^{-1};
\braid [number of strands=5,
style strands={2,4,5}{opacity=0}, height=1 cm, width=1 cm]at (2,-1)  a_4;

\fill[black] (2,-1) circle (2pt);
\fill[black] (4,-1) circle (2pt);
\fill[black] (2.1,-2.5) circle (2pt);
\fill[black] (2,-2.5) circle (2pt);
\fill[black] (3,-1) circle (2pt);
\fill[black] (4,-2.5) circle (2pt);

\draw  node at (4.25,-1.75) [right]   {$ F^{111}_{21} \coloneqq i_{D*} \circ  \pi_E^*  $};

\braid [number of strands=3,
style strands={1,3}{opacity=0}, height=0.9 cm, width=1cm]at (7.9,-1) a_2;
\braid [number of strands=5,
style strands={2,4,5}{opacity=0}, height=1 cm, width=1 cm]at (8,-1)  a_4;

\fill[black] (8,-1) circle (2pt);
\fill[black] (10,-1) circle (2pt);
\fill[black] (8,-2.5) circle (2pt);
\fill[black] (8.9,-1) circle (2pt);
\fill[black] (9.9,-2.5) circle (2pt);
\fill[black] (10,-2.5) circle (2pt);

\draw  node at (10.25,-1.75) [right]   {$F^{111}_{12} = i_{B*} \circ  \pi_A^* $};

\end{tikzpicture}
            \caption{First and second forks } 
    \end{figure}

\end{definition}

\begin{proposition}
The Fourier-Mukai kernel of $F^{111}_{21}$ and $F^{111}_{12}$ are respectively the sheaves $ \mathcal{O}_{E\times_E D} \in D^b(E \times C ) $ and $ \mathcal{O}_{A \times_A B} \in D^b(A \times C ) $.
\end{proposition}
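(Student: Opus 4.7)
The plan is to compute the convolution of the Fourier-Mukai kernels of the two constituent functors via formula \eqref{FM-comp} and check that the relevant subvarieties intersect transversally, so that the derived tensor product reduces to the structure sheaf of a set-theoretic intersection via \eqref{transverse}. I treat $F^{111}_{21}$ in detail; the argument for $F^{111}_{12}$ is identical after swapping roles.

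First I record the kernels of the two factors from the conventions set up in the paper. The pullback $\pi_E^*: D^b(E) \to D^b(D)$ along the $\mathbb{P}^1$-bundle $\pi_E$ is represented by $K_1 = \mathcal{O}_{E \times_E D} \in D^b(E \times D)$, the structure sheaf of the graph of $\pi_E$; the pushforward $i_{D*}: D^b(D) \to D^b(C)$ along the divisorial inclusion $i_D$ is represented by $K_2 = \mathcal{O}_{D \times_D C} \in D^b(D \times C)$, the structure sheaf of the graph of $i_D$. The kernel of $F^{111}_{21}$ is therefore $K_2 \star K_1 = \pi_{13*}(\pi_{12}^* K_1 \otimes \pi_{23}^* K_2)$, where $\pi_{ij}$ are the projections from $E \times D \times C$.

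Next I verify transversality. The pulled-back sheaf $\pi_{12}^* K_1$ is the structure sheaf of $V_1 = \{(e,d,c) : \pi_E(d) = e\} \subset E \times D \times C$ of codimension $\dim E = 4$, and $\pi_{23}^* K_2$ is the structure sheaf of $V_2 = \{(e,d,c) : i_D(d) = c\}$ of codimension $\dim C = 6$. A dimension count using $\dim E = 4$, $\dim D = 5$, $\dim C = 6$ gives $\dim(E \times D \times C) = 15$, while the set-theoretic intersection
\[
 W = \{(\pi_E(d), d, i_D(d)) : d \in D\} \cong D
\]
has dimension $5$, i.e.\ codimension $10 = 4 + 6$. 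Hence $V_1$ and $V_2$ meet transversally and \eqref{transverse} gives $\pi_{12}^* K_1 \otimes \pi_{23}^* K_2 \simeq \mathcal{O}_W$.

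Finally I identify the direct image. The restriction of $\pi_{13}$ to $W$ sends $(\pi_E(d), d, i_D(d))$ to $(\pi_E(d), i_D(d))$, which is a closed embedding since $i_D$ is injective. Its image is precisely
\[
 \bigl\{(e,c) \in E \times C : c \in i_D(D),\ \pi_E(c) = e\bigr\} = E \times_E D \subset E \times C,
\]
so $\pi_{13*}\mathcal{O}_W \simeq \mathcal{O}_{E \times_E D}$, giving the claimed kernel. The proof for $F^{111}_{12} = i_{B*} \circ \pi_A^*$ is the same word-for-word, with $(\pi_E, i_D, E, D)$ replaced by $(\pi_A, i_B, A, B)$; the transversality count works identically since $\dim A = 4$, $\dim B = 5$, $\dim C = 6$. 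The only step that is not pure bookkeeping is the transversality verification, and that reduces immediately to a dimension count, so I do not expect any serious obstacle.
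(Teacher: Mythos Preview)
Your proof is correct and follows essentially the same approach as the paper: write the convolution formula \eqref{FM-comp}, verify transversality of the two pulled-back subvarieties by a dimension count, and then observe that $\pi_{13}$ restricted to the intersection is a closed embedding with image $E\times_E D$. Your codimension arithmetic ($4+6=10$) is in fact cleaner than the paper's (which contains a typo ``codimension 9''), and your explicit description of $W$ and of the image of $\pi_{13}|_W$ makes the final identification transparent.
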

\begin{proof}
Let  $\pi_{12}, \pi_{23}, \pi_{13}$ be the natural projections
\begin{equation}\label{splittingprojections}
\begin{tikzcd}
  &  E \times D \times C \arrow[dl, "\pi_{12}"' ]\arrow[d, "\pi_{13}" ]\arrow[dr, "\pi_{23}" ]  &\\
E\times D    &  D\times C    &  D\times E  \\
\end{tikzcd}.
\end{equation}

By (\ref{FM-comp}), the composition $F^{111}_{21}$ of $i_{D*} \circ  \pi_E^*$ is represented by  the convolution of their Fourier-Mukai kernels, thus
\[  \pi_{13*}( \pi_{12}^* \mathcal{O}_{E \times_E E}  \otimes  \pi_{23}^*  \mathcal{O}_{D \times_D D} )= \pi_{13*}( \mathcal{O}_{ E\times_E E \times C}   \otimes     \mathcal{O}_{E \times D\times_D D } )\]

By (\ref{transverse}), the latter derived tensor product of the  structure sheaves  is isomorphic to the structure sheaf of the intersection
\[ \mathcal{O}_{ E\times_E E \times C}   \otimes     \mathcal{O}_{E \times D\times_D D } \simeq \mathcal{O}_{E \times_E E \times_E D}  \]

Indeed the subvariety $E\times_E D \times C $ is of codimension 4 inside $E \times D \times C$ while the codimension of the subvariety $ E \times D \times_D D$ is 6 inside $E \times D \times C$.

Their intersection $(E\times_E D \times C)\cap ( E \times D\times_D D)$ is smooth and of codimension 9, therefore they intersect transversally.

Notice that $E \times_E E \times_ E D$ is isomorphic to a copy of $D$ in the third component, therefore $\pi_{13}:E \times _E E \times _E D \to  E\times C$ is an embedding, so the derived pushforward of $\mathcal{O}_{E \times _E E \times _E D }$ is just the structure sheaf of the image of the support.

So in conclusion the Fourier-Mukai kernel of $F^{111}_{21}$ is isomorphic to
\[  \mathcal{O}_{E\times_E D}. \]

The same argument applies to $i_{B*} \circ  \pi_A^*$ for showing that the kernel of $F^{111}_{12}$ is isomorphic to
\[  \mathcal{O}_{A \times_A B}. \]
\end{proof}

\begin{remark}
The first merge 
\begin{equation}
 R^{21}_{111} = \pi_{E*} \circ  i_D^!: D^b(C) \to D^b (E) 
\end{equation}
  and  the second merge
\begin{equation}
 R^{12}_{111} = \pi_{A*} \circ  i_B^!: D^b(C) \to D^b (A) 
\end{equation}
are respectively the right adjoints of  $F_{21}^{111} $ and  $F_{12}^{111} $.
\begin{figure}[!h] \centering \begin{tikzpicture}

\braid [number of strands=3,
style strands={2,3}{opacity=0}, height=1 cm, width=0.9cm]at (2.1,-1) a_1;
\braid [number of strands=5,
style strands={2,4,5}{opacity=0}, height=1 cm, width=1 cm]at (2,-1)  a_4;

\draw  node at (4.25,-1.75) [right]   {$R^{21}_{111} = p_{E*} \circ  i_D^!  $};

\fill[black] (2,-1) circle (2pt);
\fill[black] (2.1,-1) circle (2pt);
\fill[black] (4,-1) circle (2pt);
\fill[black] (2,-2.5) circle (2pt);
\fill[black] (3,-2.5) circle (2pt);
\fill[black] (4,-2.5) circle (2pt);

\braid [number of strands=3,
style strands={1,2}{opacity=0}, height=1 cm, width=0.9cm]at (8.1,-1) a_2^{-1};
\braid [number of strands=5,
style strands={2,4,5}{opacity=0}, height=1 cm, width=1 cm]at (8,-1)  a_4^{-1};

\fill[black] (8,-1) circle (2pt);
\fill[black] (9.9,-1) circle (2pt);
\fill[black] (10,-1) circle (2pt);

\fill[black] (8,-2.5) circle (2pt);
\fill[black] (9,-2.5) circle (2pt);
\fill[black] (10,-2.5) circle (2pt);

\draw  node at (10.25,-1.75) [right]   {$R_{111}^{12} =  p_{A*} \circ  i_B^!  $};

\end{tikzpicture}  \caption{First and second merges} \end{figure}
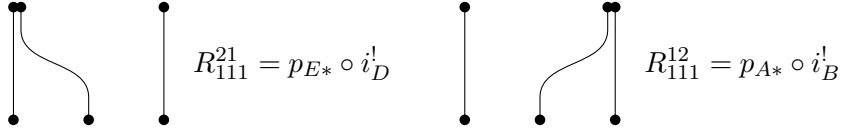

\end{remark}

\begin{proposition}\label{Splitkernel}
The Fourier-Mukai kernel associated to $R^{21}_{111}$ and $R^{12}_{111}$ are respectively the objects $ \mathcal{O}_{D \times_E E}(D,0)[-1] \in D^b(C \times E ) $ and $ \mathcal{O}_{B \times_A A}(B,0)[-1] \in D^b(C \times A ) $.
\end{proposition}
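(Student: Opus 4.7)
The plan is to mirror the computation used in the preceding proposition for $F^{111}_{21}$ and $F^{111}_{12}$, with the extra bookkeeping for the twist and shift coming from the upper-shriek. Since $R^{21}_{111} = \pi_{E*} \circ i_D^!$, its Fourier--Mukai kernel is the convolution of the kernels of $\pi_{E*}$ and $i_D^!$. From the conventions fixed in the setup, the kernel of $i_D^!$ is $\mathcal{O}_{C \times_D D}(D,0)[-1] \in D^b(C \times D)$ (as $i_D$ is a divisorial embedding) and the kernel of $\pi_{E*}$ is $\mathcal{O}_{D \times_D E} \in D^b(D \times E)$ (as $\pi_E$ is a fibration). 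Applying \eqref{FM-comp} with the three projections $\pi_{12}, \pi_{23}, \pi_{13}$ from $C \times D \times E$, the kernel of $R^{21}_{111}$ is
\[
\pi_{13*}\!\left( \pi_{12}^* \mathcal{O}_{C \times_D D}(D,0)[-1] \;\otimes\; \pi_{23}^* \mathcal{O}_{D \times_D E} \right).
\]

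Next I would compute the derived tensor product via \eqref{transverse}. The first pulled-back sheaf is supported on $\{(c,d,e) : c = i_D(d)\}$, of dimension $\dim D + \dim E = 9$ hence codimension $6$ in the ambient fifteen-dimensional space. The second is supported on $\{(c,d,e) : e = \pi_E(d)\}$, of dimension $\dim C + \dim D = 11$ hence codimension $4$. Their scheme-theoretic intersection is the graph-type submanifold $\Delta := \{(i_D(d), d, \pi_E(d)) : d \in D\} \cong D$, of codimension $10 = 6 + 4$. Transversality is thus immediate from the dimension count, and the derived tensor product is simply $\mathcal{O}_{\Delta}(D,0,0)[-1]$, where the twist is $\mathcal{O}_C(D)$ pulled back from the first factor.

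Finally I would push forward by $\pi_{13}: C \times D \times E \to C \times E$. Restricted to $\Delta \cong D$, this projection is the map $d \mapsto (i_D(d), \pi_E(d))$, which is a closed immersion since $i_D$ is injective; its image is precisely the subvariety $D \times_E E \subset C \times E$ in the paper's notation, with $D$ viewed inside $C$ via $i_D$. Hence the pushforward equals the structure sheaf of this image, preserving the twist and the shift, which yields $\mathcal{O}_{D \times_E E}(D,0)[-1] \in D^b(C \times E)$ as claimed.

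The argument for $R^{12}_{111} = \pi_{A*} \circ i_B^!$ is obtained by swapping $(E, D, \pi_E, i_D)$ with $(A, B, \pi_A, i_B)$ throughout; the relevant dimensions are identical ($\dim A = 4$, $\dim B = 5$, $\dim C = 6$), so the transversality check and all subsequent identifications go through verbatim, giving $\mathcal{O}_{B \times_A A}(B,0)[-1] \in D^b(C \times A)$. The only delicate step is the transversality verification, but since all intersections are of graph-type subvarieties, it reduces to a clean dimension count; tracking the line bundle twist $\mathcal{O}(D)$ (resp.\ $\mathcal{O}(B)$) through pullback, tensor product, and pushforward along an embedding is routine since none of these operations interact nontrivially with a pulled-back line bundle.
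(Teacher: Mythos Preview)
Your proposal is correct and follows essentially the same approach as the paper: convolve the standard kernels of $i_D^!$ and $\pi_{E*}$, verify transversality by a dimension count, and push forward along an embedding. The only differences are cosmetic---you order the triple product as $C \times D \times E$ rather than $E \times D \times C$ and carry the shift $[-1]$ explicitly throughout, whereas the paper refers back to the transversality already established in the proof of the preceding proposition.
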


\begin{proof}
Let  $\pi_{12}, \pi_{23}, \pi_{13}$ be the natural projections as in diagram (\ref{splittingprojections}).

By (\ref{FM-comp}), the composition $R^{21}_{111} $ of $p_{E*} \circ  i_D^!$ is represented by  the convolution  of their Fourier-Mukai kernels, thus
\[  \pi_{13*}(\pi_{23}^*  \mathcal{O}_{D \times_D D} (D,0)  \otimes \pi_{12}^* \mathcal{O}_{E \times_E E} )  \simeq \pi_{13*}(\mathcal{O}_{E \times D\times_D D }   (D,0,0) \otimes   \mathcal{O}_{ E\times_E E \times C}    ) \] 

As in the proof of Proposition (\ref{splittingprojections}), by transversality of the intersection of $E \times D\times_D D$ with  $E\times_E E \times C$, we can apply (\ref{transverse}) and obtain the isomorphism.

\[ \mathcal{O}_{E \times D\times_D D }   (D,0,0) \otimes   \mathcal{O}_{ E\times_E E \times C}   ) \simeq \mathcal{O}_{E \times_E E \times_E D} (D,0,0) .\]

Notice that $E \times_E E \times_ E D$ is isomorphic to a copy of $D$ in the third component, therefore $\pi_{13}:E \times _E E \times _E D \to  E\times C$ is an embedding, so the derived pushforward of $\mathcal{O}_{E \times _E E \times _E D }$ is just the structure sheaf of the image of the support.

Since  $\pi_{13}$ is an embedding restricted to $E \times _E E \times _E D$, we conclude that the Fourier-Mukai kernel associated to $R^{21}_{111}$ is isomorphic to 
\[ \mathcal{O}_{D\times_E E}(D,0)[-1]. \]

The same argument applies to $i_{B*} \circ  \pi_A^*$ for showing that the Fourier-Mukai associated to $R^{12}_{111}$ is isomorphic to
\[  \mathcal{O}_{B \times_A A}(B,0)[-1]. \]

\end{proof}

\begin{definition}
We define the third and fourth forks
\[ F_{3}^{12}\coloneqq i_{\mathbb{P}^{2}*} \circ  \pi_{pt1}^*: D^b(pt) \to D^b (A), \quad F_{3}^{21}\coloneqq i_{\mathbb{P}^{2\vee}*} \circ  \pi_{pt2}^*: D^b(pt) \to D^b (E)  \] 
and  the third and fourth merges
\[  R^{3}_{12} = \pi_{pt1*} \circ  i_{\mathbb{P}^{2}}^!: D^b(A) \to D^b (pt) , \quad  R^{3}_{21} = \pi_{pt2*} \circ  i_{\mathbb{P}^{2\vee}}^!: D^b(E) \to D^b (pt).\]
\end{definition}

 \begin{figure}[!h] \centering 
\begin{tikzpicture}

 \braid [number of strands=3,
style strands={1,3}{opacity=0}, height=1 cm, width=0.9cm]at (2.1,-1) a_1^{-1};
\braid [number of strands=3,
style strands={1,3}{opacity=0}, height=1 cm, width=0.9cm]at (2.2,-1) a_1^{-1};
\braid [number of strands=5,
style strands={2,3,4,5}{opacity=0}, height=1 cm, width=1 cm]at (2,-1)  a_4;
\fill[black] (2,-1) circle (2pt);
\fill[black] (2.2,-2.5) circle (2pt);
\fill[black] (2.1,-2.5) circle (2pt);
\fill[black] (2,-2.5) circle (2pt);
\fill[black] (3,-1) circle (2pt);
\fill[black] (3.1,-1) circle (2pt);

\draw  node at (4.25,-1.75) [right]   {$  F_{3}^{12}\coloneqq i_{\mathbb{P}^{2}*} \circ  \pi_{pt1}^* $};

\braid [number of strands=3,
style strands={1,3}{opacity=0}, height=1 cm, width=0.9cm]at (7.6,-1) a_2;
\braid [number of strands=3,
style strands={1,3}{opacity=0}, height=1 cm, width=0.9cm]at (7.5,-1) a_2;
\braid [number of strands=5,
style strands={2,1,4,5}{opacity=0}, height=1 cm, width=1 cm]at (7.5,-1)  a_4;
\fill[black] (9.5,-1) circle (2pt);
\fill[black] (9.3,-2.5) circle (2pt);
\fill[black] (9.4,-2.5) circle (2pt);
\fill[black] (9.5,-2.5) circle (2pt);
\fill[black] (8.5,-1) circle (2pt);
\fill[black] (8.4,-1) circle (2pt);

\draw  node at (10.25,-1.75) [right]   {$ F_{3}^{21}\coloneqq i_{\mathbb{P}^{2\vee}*} \circ  \pi_{pt2}^* $};

\end{tikzpicture}
            \caption{Third and fourth forks} 
    \end{figure}
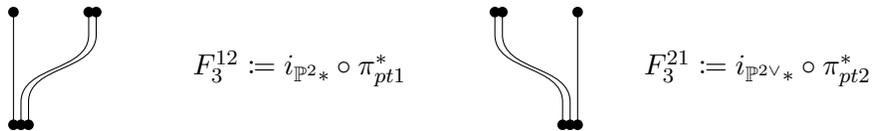

 \begin{figure}[!h] \centering 
 \adjustbox{scale=1,center}{
\begin{tikzpicture}

\braid [number of strands=3,
style strands={2,3}{opacity=0}, height=1 cm, width=0.9cm]at (2.1,0) a_1;
\braid [number of strands=3,
style strands={2,3}{opacity=0}, height=1 cm, width=0.9cm]at (2.2,0) a_1;
\braid [number of strands=5,
style strands={2,3,4,5}{opacity=0}, height=1 cm, width=1 cm]at (2,0)  a_4;

\fill[black] (2,0) circle (2pt);
\fill[black] (2.1,0) circle (2pt);
\fill[black] (2.2,0) circle (2pt);
\fill[black] (2,-1.5) circle (2pt);
\fill[black] (3,-1.5) circle (2pt);
\fill[black] (3.1,-1.5) circle (2pt);

\braid [number of strands=3,
style strands={1,2}{opacity=0}, height=1 cm, width=0.9cm]at (7.6,0) a_2^{-1};
\braid [number of strands=3,
style strands={1,2}{opacity=0}, height=1 cm, width=0.9cm]at (7.5,0) a_2^{-1};
\braid [number of strands=5,
style strands={1,2,4,5}{opacity=0}, height=1 cm, width=1 cm]at (7.5,0)  a_4^{-1};

\fill[black] (9.3,0) circle (2pt);
\fill[black] (9.4,0) circle (2pt);
\fill[black] (9.5,0) circle (2pt);

\fill[black] (8.4,-1.5) circle (2pt);
\fill[black] (8.5,-1.5) circle (2pt);
\fill[black] (9.5,-1.5) circle (2pt);

\draw  node at (4.25,-0.75) [right]   {$ R^{3}_{12} = \pi_{pt1*} \circ  i_{\mathbb{P}^{2}}^! $};

\draw  node at (10.25,-0.75) [right]   {$R^{3}_{21} = \pi_{pt2*} \circ  i_{\mathbb{P}^{2\vee}}^! $};

\end{tikzpicture}}
            \caption{Third and fourth merges } 
    \end{figure}
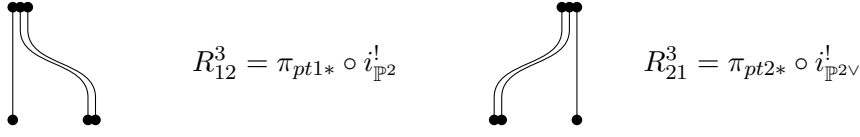

\begin{remark} \label{segal}
The functor $F_{3}^{12}$ (and similarly for the functor $F_{3}^{21}$) can be also described as the functor that maps the 1-dimensional vector space to $\mathcal{O}_{\mathbb{P}^2}$ and its right adjoint $R_{12}^{3}$ is the functor $\mathrm{RHom}(\mathcal{O}_{\mathbb{P}^2},-)$.
\end{remark}

\begin{figure}[!h] \centering 
\adjustbox{scale=0.9,center}{ 
\begin{tikzpicture}

\braid [number of strands=3,
style strands={1,2}{opacity=0}, height=0.5 cm, width=0.9 cm]at (10.1,0) a_2^{-1};
\braid [number of strands=5,
style strands={2,4,5}{opacity=0}, height=0.5 cm, width=1 cm]at (10,0)  a_4;

\braid [number of strands=3,style strands={1,3}{opacity=0}, height=0.5 cm, width=0.90cm]at (10.1,-1) a_1^{-1};
\braid [number of strands=5, style strands={2,4,5}{opacity=0}, height=0.5 cm, width=1 cm]at (10,-1)  a_4;

\fill[black] (10,0) circle (2pt);
\fill[black] (11.9,0) circle (2pt);
\fill[black] (12,0) circle (2pt);

\fill[black] (10,-1) circle (2pt);
\fill[black] (11,-1) circle (2pt);
\fill[black] (12,-1) circle (2pt);

\fill[black] (10,-2) circle (2pt);
\fill[black] (10.1,-2) circle (2pt);
\fill[black] (12,-2) circle (2pt);

\draw  node at (13,-1.5) [right]   {$F_{21}^{111} \text{ is represented by the Fourier-Mukai kernel } \mathcal{O}_{E \times _E D}$.};
\draw  node at (13,-0.5) [right]   {$R_{111}^{12} \text{ is represented by the Fourier-Mukai kernel } \mathcal{O}_{B \times_A A}(B,0)[-1]$.};
\end{tikzpicture}}

\end{figure}

\begin{proposition}\label{naive}
The Fourier-Mukai kernel associated to $R^{12}_{111} F^{111}_{21} $ is $\mathcal{O}_{Z_1}\otimes \mathcal{V}_1^* \otimes (\Lambda ^2 \mathcal{V}_2)^2[-1]$, where  \begin{align*}
Z_1:= \left\{
\xymatrix@C=0.1em{
0 &\subset   & V_2 \ar_{\alpha}@/_2em/[ll]& \subset & \mathbb{C}^3  \ar_{\alpha}@/_2em/[ll] }  \right\} 
\times
\left\{
\xymatrix@C=0.1em{
0 &\subset   & V_1 \ar_{\alpha}@/_2em/[ll]& \subset & \mathbb{C}^3  \ar_{\alpha}@/_2em/[ll] } \right\}
\end{align*}
is a line bundle over $\Fl_3$ and it is also the blow-up of $A$ or $E$ along their zero sections.
\end{proposition}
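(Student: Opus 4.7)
The plan is to compute the Fourier--Mukai kernel of the composition directly from the convolution formula (\ref{FM-comp}), using the kernels $\mathcal{O}_{E \times_E D}$ and $\mathcal{O}_{B \times_A A}(B,0)[-1]$ identified in the two previous propositions. Let $\pi_{12}$, $\pi_{13}$, $\pi_{23}$ denote the three projections from $E \times C \times A$. After pullback, the two factors are $\mathcal{O}_{E \times_E D \times A}$ and $\mathcal{O}_{E \times B \times_A A}(0,B,0)[-1]$, supported on subvarieties of $E \times C \times A$ both of dimension $9$, hence of codimension $5$ in the $14$-dimensional ambient product. Their set-theoretic intersection is $E \times_E (B \cap D) \times_A A$, of dimension $4$, which is precisely the codimension-$10$ expected of a transverse intersection. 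Transversality in the triple product reduces, via the smoothness of $\pi_A$ and $\pi_E$, to the already-recorded transversality of $B$ and $D$ in $C$, so (\ref{transverse}) collapses the derived tensor product to $\mathcal{O}_{E \times_E (B \cap D) \times_A A}(0,B,0)[-1]$.

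Next, I would verify that the restriction of $\pi_{13}$ to this intersection is a closed immersion onto the desired subvariety $Z_1 \subset E \times A$: the image $(q_2(c), q_1(c))$ of a point $c = (V_1 \subset V_2, \alpha) \in B \cap D$ recovers all the defining data, since $V_1$ sits in the $A$-factor, $V_2$ in the $E$-factor, and the nilpotent $\alpha$ is common to both. Consequently, pushing forward along $\pi_{13}$ gives $\mathcal{O}_{Z_1}$ tensored with the restriction of the line bundle $\mathcal{O}_C(B)$ to $B \cap D$, still shifted by $[-1]$.

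The remaining computation is to identify $\mathcal{O}_C(B)|_{B \cap D}$. Since $B \subset C$ is a divisor we have $\mathcal{O}_C(B)|_B = \mathcal{N}_{B/C}$, and a direct fiberwise analysis over $\Fl_3$ shows that the fiber of $C$ at $(V_1 \subset V_2)$ consists of $\bar\alpha \colon \mathbb{C}^3/V_1 \to V_2$ with $\bar\alpha(V_2/V_1) \subset V_1$, while $B$ is cut out by the single condition that the induced map $\mathbb{C}^3/V_2 \to V_2/V_1$ vanishes. Hence $\mathcal{N}_{B/C} \cong \shhomm(\mathbb{C}^3/\mathcal{V}_2, \mathcal{V}_2/\mathcal{V}_1)$; using $(\mathbb{C}^3/\mathcal{V}_2)^\vee \cong \Lambda^2 \mathcal{V}_2$ and $\mathcal{V}_2/\mathcal{V}_1 \cong \Lambda^2 \mathcal{V}_2 \otimes \mathcal{V}_1^{-1}$ for the rank-one associated graded pieces yields exactly $(\Lambda^2 \mathcal{V}_2)^2 \otimes \mathcal{V}_1^*$, which is the claimed twist.

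Finally, the geometric description of $Z_1$ follows by projecting $B \cap D$ to $\Fl_3$: the fiber at $(V_1 \subset V_2)$ is $\homm(\mathbb{C}^3/V_2, V_1)$, a one-dimensional vector space, so $Z_1 \cong B \cap D$ is a line bundle over $\Fl_3$; the identification with the blow-up of the zero section of $A$ or $E$ is already recorded in the setup via the maps $q_1$ and $q_2$. The main subtlety in the whole argument lies in carefully verifying transversality inside the triple product and tracking the line bundle twist through convolution and pushforward; both are essentially bookkeeping once the geometry of $B \cap D$ is in place.
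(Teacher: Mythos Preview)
Your proof is correct and follows essentially the same route as the paper: convolution of the kernels from the two preceding propositions, a transversality check in $E \times C \times A$ by codimension count, observation that $\pi_{13}$ restricts to a closed immersion, and identification of the line-bundle twist. The only substantive difference is that the paper outsources the identification $\mathcal{O}(B)|_{B\cap D} \simeq \mathcal{V}_1^* \otimes (\Lambda^2 \mathcal{V}_2)^2$ to a reference (section 4.1 of \cite{KT}), whereas you compute it directly via the normal bundle $\mathcal{N}_{B/C} \cong \shhomm(\mathbb{C}^3/\mathcal{V}_2, \mathcal{V}_2/\mathcal{V}_1)$; this is a welcome self-contained addition rather than a different strategy.
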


\begin{proof}
Let  $\pi_{12}, \pi_{23}, \pi_{13}$ be the natural projections
\begin{equation*}
\begin{tikzcd}
  &  E \times C \times A \arrow[dl, "\pi_{12}"' ]\arrow[d, "\pi_{13}" ]\arrow[dr, "\pi_{23}" ]  &\\
E\times C    &  E\times A    &  C\times A  \\
\end{tikzcd}.
\end{equation*}

By (\ref{FM-comp}), the composition of the functors is represented by the convolution of their Fourier-Mukai kernels, thus the kernel of $R^{12}_{111} F^{111}_{21}$ is isomorphic to
\[  \pi_{13*}( \pi_{12}^* \mathcal{O}_{E \times_E D}  \otimes  \pi_{23}^*  \mathcal{O}_{B \times_A A}(B,0)[-1] )= \pi_{13*}( \mathcal{O}_{ E\times_E D \times A}   \otimes     \mathcal{O}_{E \times B\times_A A }(0,B,0) )[-1] \]

The subvariety $E\times_E D \times A$ is of codimension 5 inside $E \times C \times A$; the same codimension is the one of $E \times B\times_A A$ in $E \times C \times A$.

Their intersection $(E\times_E D \times A)\cap ( E \times B\times_A A)=E \times _E (D\cap B) \times _A A$ is smooth and of codimension 10, therefore $E\times_E D \times A$ and $E \times B\times_A A$ intersect transversally in $E \times C \times A$.

By (\ref{transverse}), 
\[ \mathcal{O}_{ E\times_E D \times A}   \otimes     \mathcal{O}_{E \times B\times_A A } \simeq \mathcal{O}_{E \times _E (D\cap B) \times _A A} \]

The subvariety $E \times _E (D\cap B) \times _A A$ can be described as the space
 \begin{align*}
 \left\{
\xymatrix@C=0.1em{
0 &\subset   & V_2 \ar_{\alpha}@/_2em/[ll]& \subset & \mathbb{C}^3  \ar_{\alpha}@/_2em/[ll] }  \right\} 
\times
\left\{
\xymatrix@C=0.1em{
0 &\subset V_1 \subset  & V_2 \ar_{\alpha}@/_2em/[ll]& \subset & \mathbb{C}^3  \ar_{\alpha}@/_-2em/[lll] }  \right\}
\times
\left\{
\xymatrix@C=0.1em{
0 &\subset   & V_1 \ar_{\alpha}@/_2em/[ll]& \subset & \mathbb{C}^3  \ar_{\alpha}@/_2em/[ll] }  \right\}.
\end{align*}

It is clear that $\pi_{13}:E \times _E (D\cap B) \times _A A \to  E\times A$ is an embedding. 

The image $Z_1$ of $E \times _E (D\cap B) \times _A A$ under $\pi_{13}$ is of the form

 \begin{align*}
Z_1= \left\{
\xymatrix@C=0.1em{
0 &\subset   & V_2 \ar_{\alpha}@/_2em/[ll]& \subset & \mathbb{C}^3  \ar_{\alpha}@/_2em/[ll] }  \right\} 
\times
\left\{
\xymatrix@C=0.1em{
0 &\subset   & V_1 \ar_{\alpha}@/_2em/[ll]& \subset & \mathbb{C}^3  \ar_{\alpha}@/_2em/[ll] } \right\}.
\end{align*} 

Notice\footnote{A general argument can be found in section 4.1 of \cite{KT}.} that $\mathcal{O}_{Z_1}(0,B,0)\simeq \mathcal{V}_1^* \otimes (\Lambda ^2 \mathcal{V}_2)^2$, so the Fourier-Mukai kernel of $R^{12}_{111} F^{111}_{21} $ is isomorphic to

\[   \mathcal{O}_{Z_1}\otimes \mathcal{V}_1^* \otimes (\Lambda ^2 \mathcal{V}_2)^2[-1] .\]
\end{proof}

The single crossings are the autoequivalences $T_i$ that induces the Khovanov-Thomas  braid group action on $D^b(C)$ of  Theorem 4.5 of \cite{KT}.

\begin{remark}
The simple crossing functors $ \brTaaaCa$ $\brTaaaCb$ are 
\begin{equation*}
    \brTaaaCa = \cone\left( F_{21}^{111} R_{111}^{21}[-1] \xrightarrow{\trace} \id_{111} \right), 
\end{equation*}
            
\begin{equation*}           
    \brTaaaCb = \cone\left( F_{12}^{111}R_{111}^{12}[-1] \xrightarrow{\trace} \id_{111} \right), 
\end{equation*}
where $tr$ is the counit of the adjunction.

\end{remark}

\begin{proposition}
The Fourier-Mukai kernel associated to $\brTaaaCa$ is the sheaf 
$$\mathcal{O}_{(C\times_C C) \cup (D \times _E D)}(D,0).$$

The Fourier-Mukai kernel associated to $\brTaaaCa$ is the sheaf $\mathcal{O}_{(C\times_C C) \cup (B \times _A B)}(B,0)$.
\end{proposition}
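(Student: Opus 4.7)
The plan is to first compute the Fourier--Mukai kernel of the composition $F_{21}^{111} R_{111}^{21}$ by convolving the kernels of the two factors, then identify the cone of the counit with a structure sheaf on a reducible subvariety via the short exact sequence \eqref{SES}. The argument for $\brTaaaCb$ is obtained from the one for $\brTaaaCa$ by substituting $(D,E,\pi_E,i_D)$ with $(B,A,\pi_A,i_B)$, so I focus on $\brTaaaCa$.

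For the convolution, I apply \eqref{FM-comp} on the triple product $C \times E \times C$ to the kernels $K_{F_{21}^{111}} = \mathcal{O}_{E \times_E D}$ and $K_{R_{111}^{21}} = \mathcal{O}_{D \times_E E}(D, 0)[-1]$. A dimension count shows that the supports of $\pi_{12}^* K_{R_{111}^{21}}$ and $\pi_{23}^* K_{F_{21}^{111}}$ are each of codimension $5$ in the $16$-dimensional ambient space, and their scheme-theoretic intersection
\[
\{(i_D(d),\pi_E(d),i_D(d')) \mid d,d'\in D,\ \pi_E(d)=\pi_E(d')\}
\]
is of codimension $10$. The intersection is therefore transverse, so by \eqref{transverse} the derived tensor product is the structure sheaf of this intersection, twisted by $\mathcal{O}(D,0,0)$ and shifted by $[-1]$. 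Since $\pi_{13}$ restricts to an isomorphism from the intersection onto $D \times_E D \subset C \times C$, I conclude
\[
K_{F_{21}^{111} R_{111}^{21}} \simeq \mathcal{O}_{D \times_E D}(D, 0)[-1].
\]

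For the cone step, the counit induces at the kernel level a morphism $\varepsilon\colon \mathcal{O}_{D\times_E D}(D,0)[-1] \to \mathcal{O}_{\Delta_C}$. Since $D$ is a Cartier divisor in $C$, the intersection $\Delta_C \cap (D\times_E D) = \Delta_D$ is a Cartier divisor both in $\Delta_C \cong C$ and in $D\times_E D$. Applying \eqref{SES} with $X = D\times_E D$ and $Y = \Delta_C$, tensoring with the line bundle $\mathcal{O}_{C\times C}(D,0)$, and noting that $\Delta_D$ restricts to the divisor $D$ on $\Delta_C \cong C$, so $\mathcal{O}_{\Delta_C}(-\Delta_D)(D,0) \simeq \mathcal{O}_{\Delta_C}$, I obtain
\[
0 \to \mathcal{O}_{\Delta_C} \to \mathcal{O}_{\Delta_C \cup (D\times_E D)}(D, 0) \to \mathcal{O}_{D\times_E D}(D,0) \to 0,
\]
which rotates to the distinguished triangle $\mathcal{O}_{D \times_E D}(D, 0)[-1] \xrightarrow{\delta} \mathcal{O}_{\Delta_C} \to \mathcal{O}_{\Delta_C \cup (D\times_E D)}(D, 0)$.

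The main obstacle is showing that $\varepsilon$ and $\delta$ agree up to an invertible scalar. I would handle this by computing $\mathrm{Hom}_{D^b(C\times C)}(\mathcal{O}_{D\times_E D}(D,0)[-1], \mathcal{O}_{\Delta_C})$ through a local analysis around $\Delta_D$; its description as an $\mathrm{Ext}^1$ supported on the intersection is one-dimensional. Both $\varepsilon$ (nonzero because the adjunction $F_{21}^{111} \dashv R_{111}^{21}$ is nondegenerate) and $\delta$ (nonzero as the extension class of the nonsplit sequence above) span this line, so they must be proportional. Once this matching is secured, the cones of $\varepsilon$ and $\delta$ coincide, yielding $K_{\brTaaaCa} \simeq \mathcal{O}_{(C\times_C C)\cup(D\times_E D)}(D,0)$.
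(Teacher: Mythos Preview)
Your proof is correct and essentially reconstructs the argument of Proposition~4.4 in \cite{KT}, which is all the paper invokes here: the paper's own proof is simply the citation. Your convolution computation of $K_{F_{21}^{111}R_{111}^{21}} \simeq \mathcal{O}_{D\times_E D}(D,0)[-1]$ via transversality, followed by the use of \eqref{SES} and the identification $\mathcal{O}_{\Delta_C}(-\Delta_D)(D,0)\simeq \mathcal{O}_{\Delta_C}$, matches the Khovanov--Thomas approach exactly. The one point you flag but do not fully carry out---that $\mathrm{Ext}^1_{C\times C}(\mathcal{O}_{D\times_E D}(D,0),\mathcal{O}_{\Delta_C})$ is one-dimensional so that $\varepsilon$ and $\delta$ must be proportional---is indeed the substantive step; in \cite{KT} this is handled by an adjunction/local computation reducing to $H^0$ of the structure sheaf of the (connected) intersection $\Delta_D$, and your sketch is on the right track.
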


\begin{proof}
Proposition 4.4 in \cite{KT}.
\end{proof}

\section{A split-$\mathbb{P}^2$ action of $\gbrcat_3$ on $T^*\Fl_3(\bar{i})$: main theorem}
\label{cat_act_flag_final_sec}
In this section, we prove that the assignments of the previous sections satisfy the hypothesis of Theorem \ref{main1} and therefore define a categorical action of $\gbrcat _3$ on  $D^b(T^*\Fl_3(\bar{i}))$

\begin{lemma} \label{main1}
The following isomorphism holds
    \begin{equation*}
   F_{21}^{111} F_{3}^{21} \simeq F_{12}^{111} F_{3}^{12}
    \end{equation*}

\begin{figure}[!h] \centering    
 \begin{tikzpicture}

\braid [number of strands=3,
style strands={1,3}{opacity=0}, height=1 cm, width=0.9cm]at (0.1,0) a_1^{-1};
\braid [number of strands=3,
style strands={1,3}{opacity=0}, height=1 cm, width=0.9cm]at (0.2,0) a_1^{-1};
\braid [number of strands=5,
style strands={2,3,4,5}{opacity=0}, height=1 cm, width=1 cm]at (0,0)  a_4;

\braid [number of strands=5,
style strands={3,4,5}{opacity=0}, height=1 cm, width=1 cm]at (0,1.5)  a_4;
\braid [number of strands=3,
style strands={1,3}{opacity=0}, height=1 cm, width=0.9cm]at (1.1,1.5) a_1^{-1};

\fill[black] (2,1.5) circle (2pt);
\fill[black] (1,1.5) circle (2pt);
\fill[black] (0,1.5) circle (2pt);
\fill[black] (0,0) circle (2pt);
\fill[black] (0.2,-1.5) circle (2pt);
\fill[black] (0.1,-1.5) circle (2pt);
\fill[black] (0,-1.5) circle (2pt);
\fill[black] (1,0) circle (2pt);
\fill[black] (1.1,0) circle (2pt);

\braid [number of strands=3,
style strands={1,3}{opacity=0}, height=1 cm, width=0.9cm]at (4.1,0) a_2;
\braid [number of strands=3,
style strands={1,3}{opacity=0}, height=1 cm, width=0.9cm]at (4,0) a_2;
\braid [number of strands=3,
style strands={1,3}{opacity=0}, height=1 cm, width=0.9cm]at (3.1,1.5) a_2;

\braid [number of strands=5,
style strands={2,1,4,5}{opacity=0}, height=1 cm, width=1 cm]at (4,0)  a_4;
\braid [number of strands=5,
style strands={1,4,5}{opacity=0}, height=1 cm, width=1 cm]at (4,1.5)  a_4;

\fill[black] (4,1.5) circle (2pt);
\fill[black] (5,1.5) circle (2pt);
\fill[black] (6,1.5) circle (2pt);
\fill[black] (6,0) circle (2pt);
\fill[black] (5.8,-1.5) circle (2pt);
\fill[black] (5.9,-1.5) circle (2pt);
\fill[black] (6,-1.5) circle (2pt);
\fill[black] (5,0) circle (2pt);
\fill[black] (4.9,0) circle (2pt);

\fill[white] (-2,0) circle (2pt);

\draw  node at (3,0)    {$\simeq $};

\end{tikzpicture}
\end{figure}

\end{lemma}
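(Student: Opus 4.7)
The plan is to compute the Fourier--Mukai kernels of both compositions $F_{12}^{111} F_{3}^{12}$ and $F_{21}^{111} F_{3}^{21}$ (each an object of $D^b(pt \times C) \cong D^b(C)$) and show both are isomorphic to $\mathcal{O}_{\Fl_3}$, the structure sheaf of the zero section $\Fl_3 \hookrightarrow C$. Since a Fourier--Mukai functor out of $D^b(pt)$ is determined by its kernel, this yields the desired isomorphism of functors. By Remark \ref{segal}, the kernel of $F_{3}^{12}$ is $\mathcal{O}_{\mathbb{P}^2} \in D^b(pt \times A)$ (with $\mathbb{P}^2$ being the zero section of $A$), and similarly for $F_{3}^{21}$.

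For $F_{12}^{111} F_{3}^{12}$, I convolve $\mathcal{O}_{\mathbb{P}^2}$ with the kernel $\mathcal{O}_{A \times_A B} \in D^b(A \times C)$ of $F_{12}^{111}$ on $pt \times A \times C$ using \eqref{FM-comp}. Pulling back via the two inner projections gives, on one hand, the structure sheaf of $pt \times \mathbb{P}^2 \times C$ (of codimension 2) and, on the other, the structure sheaf of the image of $pt \times B$ under $b \mapsto (\pi_A(b), i_B(b))$ (of codimension 5 in the ambient $10$-dimensional product). A direct dimension count identifies the set-theoretic intersection with $\{(pt, p_1(v), v) : v \in \Fl_3\}$, which has dimension $3$ --- exactly the expected dimension. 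Hence the intersection is transverse and \eqref{transverse} applies, making the derived tensor product underived. Finally, the outer projection $\pi_{13}$ restricts to an isomorphism of this intersection onto $pt \times \Fl_3 \subset pt \times C$, so the pushforward is $\mathcal{O}_{\Fl_3} \in D^b(C)$.

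The computation for $F_{21}^{111} F_{3}^{21}$ is entirely parallel, replacing $A$ by $E$, $B$ by $D$, $\mathbb{P}^2$ by $\mathbb{P}^{2\vee}$, and $p_1$ by $p_2$. Since both $p_1$ and $p_2$ restrict on the zero section $\Fl_3 \subset C$ to the two natural flag-to-Grassmannian projections, the same subvariety $\Fl_3 \hookrightarrow C$ appears as the image in both cases. Hence both Fourier--Mukai kernels equal $\mathcal{O}_{\Fl_3}$, and the lemma follows.

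The main --- and essentially only nontrivial --- obstacle is verifying the transversality of these fiber product intersections, so that \eqref{transverse} genuinely applies and the two convolutions reduce to the structure sheaf of the zero section without higher Tor corrections. This verification mirrors the transversality arguments already used in the proofs of Propositions \ref{naive} and \ref{Splitkernel}, which handle similar but more elaborate intersection computations on triple products such as $E \times C \times A$.
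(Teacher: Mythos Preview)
Your proposal is correct and follows essentially the same approach as the paper's own proof: compute both Fourier--Mukai kernels by convolving on $pt \times A \times C$ (resp.\ $pt \times E \times C$), use a transversality dimension count to reduce the derived tensor product to the structure sheaf of the intersection, and identify the result with $\mathcal{O}_{pt \times \Fl_3}$. The paper's proof is terser on the transversality step, and it notes (as you do via Remark~\ref{segal}) that the argument is equivalent to checking both functors send the one-dimensional vector space to $\mathcal{O}_{\Fl_3}$.
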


\begin{proof}
The Fourier-Mukai kernel representing $F_{21}^{111} F_{3}^{21} $ is isomorphic to
\begin{equation}
\begin{split}
   &  \pi_{13*} ( \pi_{12}^* ( \mathcal{O}_{pt \times_{pt} A }) \otimes \pi_{23}^* ( \mathcal{O}_{A \times_{A} B }))  \\
    & \simeq \pi_{13*} (  \mathcal{O}_{pt \times_{pt} \mathbb{P}^{2\vee} \times C } \otimes  \mathcal{O}_{pt \times A \times_{A} B }) \\
    & \simeq \pi_{13*} (  \mathcal{O}_{pt \times_{pt} \mathbb{P}^{2\vee} \times_{\mathbb{P}^2} Fl_3 }) \simeq \mathcal{O}_{ pt \times Fl_3} \\
   \end{split}
\end{equation}

Similarly, the Fourier-Mukai kernel associated to $ F_{12}^{111} F_{3}^{12}$ is isomorphic to
\[  \mathcal{O}_{ pt \times Fl_3}.  \]

\end{proof}

\begin{remark}
From Remark \ref{segal}, Lemma \ref{main1} can be also proved showing that functors $F_{21}^{111} F_{3}^{21}$ and  $F_{12}^{111} F_{3}^{12}$ both map the 1-dimensional vector space to $\mathcal{O}_{\Fl_3}$.

\end{remark}

\begin{lemma}\label{main2}
The mapping cone of the morphism 
\[  R_{111}^{12}  F_{21}^{111} R_{111}^{21} F_{12}^{111} \xrightarrow{tr[-2]} \id [-2]\]
is isomorphic to
\begin{equation}
    Cone(tr[-2]) \simeq F_{3}^{12} R_{12}^{3}.
\end{equation}

\begin{figure}[!h] \centering    

  \adjustbox{scale=0.9,center}{   \begin{tikzpicture}
\braid [number of strands=3,
style strands={2,3}{opacity=0}, height=1.5 cm, width=1.8cm]at (5.2,-4) a_1;
\braid [number of strands=3,
style strands={2,3}{opacity=0}, height=1.5 cm, width=1.8cm]at (5.1,-4) a_1;
\braid [number of strands=5,
style strands={2,3,4,5}{opacity=0}, height=1.5 cm, width=1 cm]at (5,-4)  a_4;

\fill[white] (3,-2) circle (2pt);

\fill[black] (5.1,-4) circle (2pt);
\fill[black] (5.2,-4) circle (2pt);

  \braid [number of strands=3,
style strands={1,3}{opacity=0}, height=1.5 cm, width=1.8cm]at (5.1,-2) a_1^{-1};
  \braid [number of strands=3,
style strands={1,3}{opacity=0}, height=1.5 cm, width=1.8cm]at (5.2,-2) a_1^{-1};
\braid [number of strands=5,
style strands={2,3,4,5}{opacity=0}, height=1.5 cm, width=2 cm]at (5,-2)  a_4;

\fill[black] (5,-2) circle (2pt);
\fill[black] (6.9,-2) circle (2pt);
\fill[black] (7,-2) circle (2pt);

\fill[black] (5,-4) circle (2pt);

\fill[black] (5,-6) circle (2pt);
\fill[black] (6.9,-6) circle (2pt);
\fill[black] (7,-6) circle (2pt);

\braid [number of strands=3,
style strands={1,2}{opacity=0}, height=0.5 cm, width=0.9cm]at (10.1,-2) a_2^{-1};
\braid [number of strands=5,
style strands={2,4,5}{opacity=0}, height=0.5 cm, width=1 cm]at (10,-2)  a_4^{-1};

\braid [number of strands=3,style strands={3,1}{opacity=0}, height=0.5 cm, width=0.9cm]at (10.1,-3) a_1^{-1};
\braid [number of strands=5, style strands={2,4,5}{opacity=0}, height=0.5 cm, width=1 cm]at (10,-3)  a_4;

\braid [number of strands=3,
style strands={2,3}{opacity=0}, height=0.5 cm, width=0.9cm]at (10.1,-4) a_1;
\braid [number of strands=5,
style strands={2,4,5}{opacity=0}, height=0.5 cm, width=1 cm]at (10,-4)  a_4;

\braid [number of strands=3,style strands={1,3}{opacity=0}, height=0.5 cm, width=0.9cm]at (10.1,-5) a_2;
\braid [number of strands=5, style strands={2,4,5}{opacity=0}, height=0.5 cm, width=1 cm]at (10,-5)  a_4;

\fill[black] (10,-2) circle (2pt);
\fill[black] (11.9,-2) circle (2pt);
\fill[black] (12,-2) circle (2pt);

\fill[black] (10,-3) circle (2pt);
\fill[black] (11,-3) circle (2pt);
\fill[black] (12,-3) circle (2pt);

\fill[black] (10,-4) circle (2pt);
\fill[black] (10.1,-4) circle (2pt);
\fill[black] (12,-4) circle (2pt);

\fill[black] (10,-5) circle (2pt);
\fill[black] (11,-5) circle (2pt);
\fill[black] (12,-5) circle (2pt);

\fill[black] (10,-6) circle (2pt);
\fill[black] (11.9,-6) circle (2pt);
\fill[black] (12,-6) circle (2pt);

\draw  node at (13,-4) [right]   {$\longrightarrow $};
\draw  node at (8,-4) [right]   {$\longrightarrow $};

\braid [number of strands=5,
style strands={2,4,5}{opacity=0}, height=3.5 cm, width=1cm]at (15,-2) a_4;
\braid [number of strands=5,
style strands={1,2,4,5}{opacity=0}, height=3.5 cm, width=1cm]at (14.9,-2) a_4;

\fill[black] (15,-2) circle (2pt);
\fill[black] (16.9,-2) circle (2pt);
\fill[black] (17,-2) circle (2pt);

\fill[black] (15,-6) circle (2pt);
\fill[black] (16.9,-6) circle (2pt);
\fill[black] (17,-6) circle (2pt);

\draw  node at (17.5,-4) [right]   {$[-2] $};

\end{tikzpicture}}

\end{figure}

\end{lemma}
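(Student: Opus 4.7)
The strategy is to compute explicitly the Fourier--Mukai kernel of $R_{111}^{12}F_{21}^{111}R_{111}^{21}F_{12}^{111}$ as an object of $D^b(A\times A)$, identify its geometric support as a union of two subvarieties, and then use the short exact sequence (\ref{SES}) to extract the desired exact triangle whose cone realises $F_{3}^{12}R_{12}^{3}$.

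First I would compute the middle composition $R_{111}^{21}F_{12}^{111}$: by Proposition \ref{naive} (applied with the roles of $A$ and $E$ swapped by the symmetry of the setup), its kernel on $A\times E$ is the structure sheaf of
\[
Z_1':= \left\{
\xymatrix@C=0.1em{
0 &\subset   & V_1 \ar_{\alpha}@/_2em/[ll]& \subset & \mathbb{C}^3  \ar_{\alpha}@/_2em/[ll] }  \right\}
\times
\left\{
\xymatrix@C=0.1em{
0 &\subset   & V_2 \ar_{\alpha}@/_2em/[ll]& \subset & \mathbb{C}^3  \ar_{\alpha}@/_2em/[ll] } \right\}
\]
twisted by the appropriate line bundle determined by the divisor $(0,B,0)$ and shifted by $[-1]$. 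Convolving this with the kernels of $F_{21}^{111}$ and $R_{111}^{12}$ via (\ref{FM-comp}) and repeatedly applying (\ref{transverse}) at the transverse intersections in $A\times C\times E\times C\times A$, the full composition acquires a kernel whose underlying subvariety $Y\subset A\times A$ is
\[
Y=\left\{((V_1,\alpha),(V_1',\alpha'))\in A\times A\ \middle\vert\ V_1+V_1'\subseteq V_2\text{ for some }V_2,\ \alpha=\alpha'\right\},
\]
that is, the locus of pairs lying in a common $2$-plane with matching cotangent vector.

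Next I would observe that $Y$ decomposes as $Y=\Delta_A\cup Z$, where $\Delta_A$ is the diagonal and $Z$ is the closure of the locus where $V_1\ne V_1'$. The intersection $\Delta_A\cap Z$ is exactly the zero section $\mathbb{P}^2\hookrightarrow A$ along the diagonal, which is a Cartier divisor in $Z$. Applying the short exact sequence (\ref{SES}) to this decomposition, twisted by the line bundle collected through the convolution and with the global $[-2]$ shift coming from the two $[-1]$ shifts in the kernels of $R_{111}^{12}$ and $R_{111}^{21}$, yields an exact triangle
\[
\mathcal{O}_{Z}(-\Delta_A\cap Z)\otimes L[-2]\longrightarrow \mathcal{O}_{Y}\otimes L[-2]\longrightarrow \mathcal{O}_{\Delta_A}[-2],
\]
where $L$ is the resulting twisting line bundle. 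The rightmost map is, by construction, the Fourier--Mukai incarnation of the counit $\mathrm{tr}[-2]$.

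Finally I would identify the third term of this triangle with the kernel of $F_{3}^{12}R_{12}^{3}$. By Remark \ref{segal}, $F_{3}^{12}R_{12}^{3}$ is represented by $\mathcal{O}_{\mathbb{P}^2\times\mathbb{P}^2}\in D^b(A\times A)$ (since it sends any complex to $\mathrm{RHom}(\mathcal{O}_{\mathbb{P}^2},-)\otimes \mathcal{O}_{\mathbb{P}^2}$). It remains to check that $\mathcal{O}_{Z}(-\Delta_A\cap Z)\otimes L[-2]\simeq \mathcal{O}_{\mathbb{P}^2\times\mathbb{P}^2}$; this comes down to the fact that $Z$ is, set-theoretically, a $\mathbb{P}^1$-bundle over $\mathbb{P}^2\times\mathbb{P}^2$ carved out by the vanishing of $\alpha$, so twisting by $-\Delta_A\cap Z$ cancels the exceptional contribution and $L$ is trivial on the remaining component after the bookkeeping of the line bundles $\mathcal{O}(D,0)$ and $\mathcal{O}(B,0)$ restricted to $Z$. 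The principal obstacle is precisely this last bookkeeping step: tracking how the twists $\mathcal{V}_1^*\otimes(\Lambda^2\mathcal{V}_2)^2$ from Proposition \ref{naive} interact through the second convolution and restrict to $Z$, since any sign or degree error would shift the final identification away from $F_{3}^{12}R_{12}^{3}$.
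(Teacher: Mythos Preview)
Your overall strategy---compute the kernel, decompose the support into two pieces, and extract the triangle from the short exact sequence (\ref{SES})---matches the paper's, but the decomposition must be carried out \emph{before} the pushforward $\pi_{15*}$ to $A\times A$, not after. If you push forward first, the support $Y\subset A\times A$ is $\Delta_A\cup Z$ with $Z=\mathbb{P}^2\times\mathbb{P}^2$ (the product of zero sections), but then $\Delta_A\cap Z=\Delta_{\mathbb{P}^2}$ has codimension $2$ in the $4$-dimensional $Z$, so it is not a Cartier divisor and (\ref{SES}) does not apply. Worse, $\pi_{15}$ is not an embedding on the upstairs support: it contracts $\mathbb{P}^1$-fibres over $\Delta_{\mathbb{P}^2}$, so the pushed-forward kernel is not simply $\mathcal{O}_Y\otimes L[-2]$ and your triangle downstairs never gets off the ground. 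Your own final paragraph betrays this confusion: you describe $Z$ both as a subvariety of $Y\subset A\times A$ (hence of dimension $\le 4$) and as a $\mathbb{P}^1$-bundle over $\mathbb{P}^2\times\mathbb{P}^2$ (dimension $5$).

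The paper instead stays upstairs on the support $(B\cap D)\times_E(B\cap D)\subset A\times C\times E\times C\times A$. This has two irreducible $4$-dimensional components: $X_1\cong Z_1$ (the diagonal copy of $B\cap D$) and $X_2$, the blow-up of $\mathbb{P}^2\times\mathbb{P}^2$ along its diagonal. Their intersection $X_1\cap X_2\cong\mathrm{Fl}_3$ is precisely the exceptional divisor of $X_2$, hence genuinely Cartier, so (\ref{SES}) applies there. Only after writing the cone upstairs does one push forward: $X_1$ blows down to $\Delta_A$ and $X_2$ blows down to $\mathbb{P}^2\times\mathbb{P}^2$, and the pushforward of the twisted $\mathcal{O}_{X_2}$ term yields the kernel $\mathcal{O}_{\mathbb{P}^2\times\mathbb{P}^2}(0,\mathbb{P}^2)[-1]$ of $F_3^{12}R_{12}^3$ (note the twist and the $[-1]$ shift, which your Remark~\ref{segal} identification omits). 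The identification of the resulting map with the one in the statement is then obtained via Corollary~4.5 of \cite{AL1}.
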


\begin{proof}
The Fourier-Mukai kernel representing  $F_{3}^{12} R_{12}^{3}$ is isomorphic to 
\[ \pi_{13*} ( \pi_{12}^* ( \mathcal{O}_{A \times_{pt} pt }) \otimes \pi_{23}^* ( \mathcal{O}_{pt \times_{pt} A }(0,\mathbb{P}^{2}))[-1] \simeq \] \[ \simeq   \pi_{12}^* ( \mathcal{O}_{\mathbb{P}^{2} \times pt \times A }) \otimes  \mathcal{O}_{A \times pt \times \mathbb{P}^{2}}(0,0,\mathbb{P}^{2}))[-1]  \simeq \]
\[ \pi_{13*} (\mathcal{O}_{\mathbb{P}^{2} \times pt \times  \mathbb{P}^{2}}(0,0,\mathbb{P}^{2})[-1] \simeq \mathcal{O}_{\mathbb{P}^{2} \times  \mathbb{P}^{2}}(0,\mathbb{P}^{2})[-1]. \]

The Fourier-Mukai kernel representing $ R_{111}^{12}  F_{21}^{111} R_{111}^{21} F_{12}^{111} $ is isomorphic, by the proof of Lemma \ref{naive}, to
\[\pi_{15*}(\mathcal{O}_{A \times_A (B\cap D) \times_E E\times C \times A} (0,D,0,0,0) \otimes \mathcal{O}_{A \times C \times E\times_E (B\cap D) \times_A A} (0,0,0,B,0))[-2]. \]

The subvarieties $A \times_A (B\cap D) \times_E E\times C \times A$ and $A \times C \times E\times_E (B\cap D) \times_A A$ are both of codimension 10 in $A\times C \times E \times C \times A$. Their intersection $A \times_A (B\cap D) \times_E E\times  (B\cap D) \times_A A$  is of codimension 20 in  $A\times C \times E \times C \times A$ therefore by (\ref{transverse}) the Fourier-Mukai kernel representing $ R_{111}^{12}  F_{21}^{111} R_{111}^{21} F_{12}^{111}$ is isomorphic to
\[ \pi_{15*}(\mathcal{O}_{A \times_A (B\cap D) \times_E E\times_E (B\cap D) \times_A A} (0,D,0,B,0))[-2] \]
whose support is isomorphic to $(B\cap D) \times_E  (B\cap D)$. 

The variety $(B\cap D) \times_E  (B\cap D)$ has two irreducible components,
one of them $X_1$ is isomorphic to $Z_1$ of Proposition \ref{naive}, and the other one $X_2$ is the blow-up of $\mathbb{P}^2\times \mathbb{P}^2$ along the diagonal.

The intersection $X_1 \cap X_2$ of the two components is isomorphic to $Fl_3$  which is the exceptional divisor inside $X_2$.

Thus, by (\ref{SES}) we have the following short exact sequence
\[ 0 \to  \mathcal{O}_{X_2} (-(X_1\cap X_2)) \to\mathcal{O}_{X_1 \cup X_2} \to \mathcal{O}_{X_1} \to 0  \]
hence we have the isomorphism
\[  \mathcal{O}_{X_1 \cup X_2}\simeq Cone( \mathcal{O}_{X_1}[1] \to  \mathcal{O}_{X_2} (-(X_1\cap X_1)) ) \]

and therefore in $D^b(A \times C \times E \times C \times A)$ the sheaf  $\mathcal{O}_{A \times _A  (B\cap D) \times_E E \times_E (B\cap D) \times_A A}(0,D,0,B,0)$ is isomorphic to
\[   Cone( \mathcal{O}_{Y_1}[1] \to  \mathcal{O}_{Y_2} (-(Y_1\cap Y_1)) ) \otimes \mathcal{O}(0,D,0,B,0). \]

When we project via $\pi_*$ to $A\times A$ the first component $X_1$, it surjects onto the diagonal, while the second component $X_2$ surjects onto $ \mathbb{P}^2 \times \mathbb{P}^2$. 
Since both maps are blow-downs, the projection is an isomorphism except over the diagonal in $\mathbb{P}^2 \times \mathbb{P}^2$ where it is a $\mathbb{P}^1$-bundle.

Thus, taking the derived pushforward $\pi_{15*}$ of $Cone( \mathcal{O}_{Y_1}[1] \to  \mathcal{O}_{Y_2} (-(Y_1\cap Y_1)) ) \otimes \mathcal{O}(0,D,0,B,0)$  and  applying Corollary 4.5 of \cite{AL1} to the map  \[R_{3}^{12} F_{12}^{3} \to R_{111}^{12}  F_{21}^{111} R_{111}^{21} F_{12}^{111} \] we have that
\[ F_{3}^{12} R_{12}^{3} \to R_{111}^{12}  F_{21}^{111} R_{111}^{21} F_{12}^{111} \to \id_{A}[-2]\]
is a distinguished triangle.
\end{proof}

The following Lemma holds in a more general context.

\begin{lemma}
\label{lemma-multifork-isomorphic-to-cohomology-ring-for-flag-varieties}
Let $X$ be smooth projective variety over $k$, $\pi: X \rightarrow \spec k$ be the structure morphism, and $\iota\colon X \hookrightarrow T*(X)$ be the embedding of the zero section:
\begin{equation}
\begin{tikzcd}
 X \ar[hook]{r}{\iota} 
 \ar{d}{\pi}
 &
 T^*(X)
 \\
pt.
\end{tikzcd}
\end{equation}
Let $P^*$ and $P_*$ be the standard Fourier-Mukai kernels of the exact functors 
$$ \pi^* \colon D(pt) \rightarrow D(X), $$
$$ \pi_* \colon D(X) \rightarrow D(pt), $$
and let $I_*$ and $I^!$  be the standard Fourier-Mukai kernels of the exact functors 
$$ \iota_*\colon D(X) \rightarrow D(T^* X), $$
$$ \iota^!\colon D(T^* X) \rightarrow D(X). $$
(See \cite{AL3}, Section 2.6.2 for the details on the standard kernels.) 

Then we have an isomorphism in $D(pt)$:
$$ P_* I^! I_* P^* \simeq \Delta_* H^*(X,k). $$
\end{lemma}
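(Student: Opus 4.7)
The plan is to compute the Fourier–Mukai composition $P_*\star I^!\star I_*\star P^*$ by identifying it with the functor $\pi_*\iota^!\iota_*\pi^*\colon D(\spec k)\to D(\spec k)$ and evaluating on the unit object. Since $\pi^*$ sends $V\in D(\spec k)$ to $V\otimes_k \mathcal{O}_X$, and all subsequent functors are $k$-linear, the factor $V$ passes through; it therefore suffices to establish an isomorphism
\[
\pi_*\iota^!\iota_*\mathcal{O}_X \;\simeq\; H^*(X,k) \quad \text{in } D(\spec k),
\]
where $H^*(X,k)$ denotes the de Rham (equivalently singular, when $k=\mathbb{C}$) cohomology of $X$ viewed as a graded vector space with $H^m$ in cohomological degree $m$. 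Under the identification $\spec k\times \spec k=\spec k$, $\Delta$ is the identity and $\Delta_*H^*(X,k)$ is precisely this graded vector space.

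The key step is the computation of $\iota^*\iota_*\mathcal{O}_X$. Since $\iota\colon X\hookrightarrow T^*X$ is the zero section of the total space of a vector bundle, the Koszul resolution of $\iota_*\mathcal{O}_X$ over $\mathcal{O}_{T^*X}$ yields a complex whose hypercohomology sheaves are
\[
\mathcal{H}^{-i}(\iota^*\iota_*\mathcal{O}_X)\;\cong\;\wedge^{i} N^*_{X/T^*X}\;=\;\wedge^{i} T_X,\qquad 0\le i\le n=\dim X.
\]
The natural $\mathbb{G}_m$-action on $T^*X$ scaling the cotangent fibres fixes the zero section and places the $i$-th cohomology sheaf in $\mathbb{G}_m$-weight $-i$; since the weights are pairwise distinct there are no non-trivial extensions between the graded pieces, and the complex splits in the derived category:
\[
\iota^*\iota_*\mathcal{O}_X\;\simeq\;\bigoplus_{i=0}^{n} \wedge^{i}T_X\,[i].
\]

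To finish, apply $\iota^!$ using the standard identity $\iota^! (-)=\iota^*(-)\otimes\det N_{X/T^*X}[-n]$; here $\det N_{X/T^*X}=\omega_X$, so by the pairing $\wedge^{i}T_X\otimes\omega_X\cong\Omega_X^{n-i}$ we obtain
\[
\iota^!\iota_*\mathcal{O}_X\;\simeq\;\bigoplus_{j=0}^{n}\Omega_X^{j}\,[-j].
\]
Pushing forward to the point and using that $R^i\pi_*\Omega_X^{j}=H^i(X,\Omega_X^{j})$ gives
\[
\pi_*\iota^!\iota_*\mathcal{O}_X\;\simeq\;\bigoplus_{i,j}H^{i}(X,\Omega_X^{j})\,[-(i+j)].
\]
The degeneration of the Hodge-to-de Rham spectral sequence (valid in characteristic $0$) collects the summands with $i+j=m$ into $H^m(X,k)$, yielding the desired identification with $H^*(X,k)$.

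The main obstacle is the splitting argument for the Koszul complex into a sum of shifts; everything else is a bookkeeping calculation in Grothendieck duality together with Hodge theory. The $\mathbb{G}_m$-equivariance supplies this splitting cleanly, but it can alternatively be viewed as an incarnation of the Hochschild–Kostant–Rosenberg formality for the zero section of a cotangent bundle.
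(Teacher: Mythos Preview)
Your proof is correct and follows the same overall strategy as the paper: identify $\iota^!\iota_*$ with $\bigoplus_j \Omega^j_X[-j]$, push forward to the point, and invoke Hodge--de Rham degeneration. The difference lies in how the splitting is obtained. The paper works at the level of Fourier--Mukai kernels in $D(X\times X)$: it computes the cohomology sheaves of $I^!I_*$ via \cite[Prop.~7.8]{AL3} and then invokes the formality theorem of Arinkin--Caldararu--Hablicsek \cite{ACH14} to conclude $I^!I_*\simeq\bigoplus_i\Delta_*\Omega^i_{X/k}[-i]$. You instead evaluate the functor on $\mathcal{O}_X$, compute $\iota^*\iota_*\mathcal{O}_X$ via the Koszul resolution of the zero section, and split using the $\mathbb{G}_m$-weight grading; in fact the Koszul resolution pulled back along $\iota$ already has zero differential (the tautological section vanishes on the zero section), so even your weight argument is more than is strictly needed. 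Your route is more self-contained and sidesteps the formality citation; the paper's route proves the stronger statement that the kernel itself is formal in $D(X\times X)$, which is not required for the lemma but is of independent interest.
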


\begin{proof}
By  Lemma 2.19 of \cite{AL3}, we have 
\begin{align}
P_* I^! I_* P^* \simeq (\pi \times \pi)_* I^! I_*. 
\end{align}
By  Proposition 7.8 of \cite{AL3}, the object $I^! I_* \in D(X \times X)$ has the cohomology sheaves:
$$  H^i(I^! I_*) \simeq \Delta_* \wedge^i \mathcal{N}_{X/T^*X}, $$
in degrees $0 \leq i \leq n$ and $0$ elsewhere.

Moreover, by Theorem 1.8(6) of \cite{ACH14},  the object $I^! I_*$ is formal, and hence
$$I^! I_* \simeq \bigoplus_{i = 0}^n \Delta_* \wedge^i \mathcal{N}_{X/T^*X}.$$
Since $\mathcal{N}_{X/T^* X} \simeq \Omega^1_{X/k}$, we conclude that 
$$ I^! I_* \simeq \Delta_* \left( \bigoplus_{i = 0}^n \ \Omega^i_{X/k} \right).$$
Thus we have 
$$ P_* I^! I_* P^* \simeq (\pi \times \pi)_* I^! I_* \simeq  (\pi \times \pi)_* \Delta_* \left( \bigoplus_{i = 0}^n \ \Omega^i_{X/k} \right)
\simeq \Delta_* \pi_* \left( \bigoplus_{i = 0}^n \ \Omega^i_{X/k} \right).$$
Since $\pi_*$ is isomorphic to the derived global section functor $\rder\Gamma(-)$, the assertion of the lemma follows by the degeneration of the Hodge-de-Rham spectral sequence. 
\end{proof}

Putting together the previous Lemmas we have the following:

\begin{theorem} \label{main}
The assignment  $(D^b(\cota \Fl_3 )^\bullet, F^\bullet )$ of:
\begin{enumerate}
    \item the partition $(111)$ to the category  $ D^b(\cota \Fl_3 )$; 
    \item the partition $(12)$ to the category  $D^b(\cota \mathbb{P}^2)$;
    \item the partition $(21)$ to the category  $D^b(\cota \mathbb{P}^{2\vee})$;
    \item the partition $(3)$ to the category  $D^b(pt)$;
\end{enumerate}
and the assignment of:
\begin{enumerate}
    \item the fork $f_{21}^{111}$ to the functor $F_{21}^{111}$;
    \item the fork $f_{12}^{111}$ to the functor $F_{12}^{111}$; 
    \item the fork $f_{3}^{21}$ to the functor $F_{3}^{21}$;
    \item the fork $f_{3}^{12}$ to the functor $F_{3}^{12}$;
\end{enumerate}
define  define a categorical action of $\gbrcat_3$ on $\cota\Fl_3(i)$.
\end{theorem}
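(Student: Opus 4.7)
The plan is to verify the hypotheses of Theorem \ref{Theorem1}, namely the conditions \eqref{item-existence-of-both-adjoints}--\eqref{item-the-21-skein-relation} of Definition \ref{defn-split-p^2-representation-of-gbr-3}, for the assignment specified in the statement. Since the fork functors are all defined as compositions of pullback/pushforward along proper smooth morphisms and closed embeddings with standard Fourier--Mukai kernels, they are exact, enhanced, and admit enhanced left and right adjoints; this settles condition \eqref{item-existence-of-both-adjoints}.

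For conditions \eqref{item-1-2-and-2-1-forks-are-spherical-functors} and \eqref{item-1-1-forks-are-spherical-functors}, I would observe that $F_{12}^{111} = i_{B*}\pi_A^*$ and $F_{21}^{111} = i_{D*}\pi_E^*$ are precisely the Khovanov--Thomas fork functors $F_i$ of \eqref{split} in the case $n=3$: the composition of a $\mathbb{P}^1$-bundle pullback with the inclusion of a divisor in a cotangent bundle. The proof that these are split spherical functors with cotwist $[-2]$ is exactly the content (in the enhanced form) of Theorem \ref{KT} and Proposition~2.1 of \cite{KT}. For $F_3^{12}= i_{\mathbb{P}^2 *}\pi_{pt1}^*$ and $F_3^{21}= i_{\mathbb{P}^{2\vee}*}\pi_{pt2}^*$, by Remark \ref{segal} these send a vector space $V$ to $V\otimes \mathcal{O}_{\mathbb{P}^2}$ (resp.\ $V\otimes \mathcal{O}_{\mathbb{P}^{2\vee}}$). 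Since $\mathcal{O}_{\mathbb{P}^2}$ is a $\mathbb{P}^2$-object on $T^*\mathbb{P}^2$ with $H=[-2]$ (its $\mathrm{RHom}$-algebra being $H^*(\mathbb{P}^2,k)$, by the formality statement in Lemma \ref{lemma-multifork-isomorphic-to-cohomology-ring-for-flag-varieties} applied to $X=\mathbb{P}^2$), the corresponding Fourier--Mukai functor is a split $\mathbb{P}^2$-functor with $H=[-2]$, in the sense of \cite{AL2}.

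Condition \eqref{item-fl-3-condition} follows from Lemma \ref{lemma-multifork-isomorphic-to-cohomology-ring-for-flag-varieties} applied to $X=\Fl_3$: the composition $R_{12}^3 R_{111}^{12} F_{12}^{111} F_3^{12}$ equals (up to the standard kernels for $\pi$ and $\iota$) $P_* I^! I_* P^*$ with $X=\Fl_3$, whose value is $H^*(\Fl_3,k)$; the Hodge numbers $1,2,2,1$ of $\Fl_3$ in degrees $0,2,4,6$ give exactly the six-fold sum $\id \oplus [-2]^{\oplus 2} \oplus [-4]^{\oplus 2} \oplus [-6]$. The analogous identification for $F_3^{21}$ follows by symmetry, and the multifork isomorphism of Lemma \ref{main1} shows both sides arise from the same object. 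The required identification of the maps \eqref{eq1}, \eqref{eq2} with the diagonal direct summand embeddings follows by unwinding the adjunction units against the $\mathbb{P}^2$-functor structure on $F_3^{12}$ and $F_3^{21}$: the unit $\id \to R_{111}^{12} F_{12}^{111}$ precomposed with $F_3^{12}$ corresponds to the embedding of $H^*(\mathbb{P}^2,k)\hookrightarrow H^*(\Fl_3,k)$ induced by $\pi_1^*\colon \Fl_3\to \mathbb{P}^2$, which by the Leray--Hirsch decomposition for the $\mathbb{P}^1$-bundle $p_1$ picks out the summands in degrees $0,2,4$ spanned by $1,\xi_1,\xi_1^2$, i.e.\ the three asserted direct summands. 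Conditions \eqref{item-the-12-skein-relation} and \eqref{item-the-21-skein-relation} are precisely the content of Lemma \ref{main2}; the second is obtained by interchanging the roles of $B$ and $D$, $\mathbb{P}^2$ and $\mathbb{P}^{2\vee}$.

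Having verified \eqref{item-existence-of-both-adjoints}--\eqref{item-the-21-skein-relation}, Theorem \ref{Theorem1} applies and produces the required split-$\mathbb{P}^2$ categorical action, with merges and crossings defined as in conditions \eqref{item-valency-2-merge-condition}--\eqref{item-1-2-and-2-1-crossings-condition} of Definition \ref{defn-split-p^2-representation-of-gbr-3}. I expect the main obstacle to be the compatibility aspect of condition \eqref{item-fl-3-condition}: merely computing the underlying object on both sides as $H^*(\Fl_3,k)$ is not enough, one must also check that the two natural maps from $R_{12}^3 F_3^{12}\simeq H^*(\mathbb{P}^2,k)$ and $R_{21}^3 F_3^{21}\simeq H^*(\mathbb{P}^{2\vee},k)$ into this object agree with the two Leray--Hirsch embeddings coming from $p_1$ and $p_2$, and that these two embeddings carry complementary information in the sense needed for \eqref{eq1}--\eqref{eq2}. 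This is where the detailed description of the excess intersection in the proof of Lemma \ref{naive}, together with the formality of $I^!I_*$ used in Lemma \ref{lemma-multifork-isomorphic-to-cohomology-ring-for-flag-varieties}, will have to be combined carefully.
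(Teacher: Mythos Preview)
Your proposal is correct and follows essentially the same approach as the paper: verify conditions \eqref{item-existence-of-both-adjoints}--\eqref{item-the-21-skein-relation} of Definition \ref{defn-split-p^2-representation-of-gbr-3} and invoke Theorem \ref{Theorem1}. The only differences are in the supporting citations: the paper appeals to \S7 of \cite{AL3} for the split spherical and split $\mathbb{P}^2$ structures on the forks (rather than \cite{KT} and \cite{AL2} as you do), and it cites Theorem 7.2 of \cite{AL3} for the identification of the maps \eqref{eq1}--\eqref{eq2} with the direct-summand embeddings, rather than sketching the Leray--Hirsch argument you outline; your explicit description of this step as the main obstacle is well-placed and matches what that citation packages.
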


\begin{proof}
As in section 7 of \cite{AL3}, $F_{21}^{111}$ and $F_{12}^{111}$ are split spherical functors with cotwist $[-2]$, while $F_{3}^{21}$ and $F_{3}^{12}$ are split $\mathbb{P}^2$ functors with $H=[-2]$.

As a consequence of Lemma \ref{main1}, there exists a multifork isomorphism.

By Lemma \ref{lemma-multifork-isomorphic-to-cohomology-ring-for-flag-varieties}, there exists an isomorphism
        \begin{align*}
            R_{12}^3 R_{111}^{12} F_{12}^{111} F_{3}^{12} \simeq \id_3 \oplus [-2] \oplus [-2] \oplus [-4] \oplus [-4] \oplus [-6]
            \simeq R_{21}^3 R_{111}^{21} F_{21}^{111} F_{3}^{21}
        \end{align*}
and moreover, from Theorem 7.2 of \cite{AL3} , it identifies together with the $\mathbb{P}^2$ functor structure of $F_{3}^{21}$ and $F_{3}^{12}$ the maps \ref{eq1}, \ref{eq2} with

        \begin{equation*}
            \id_3 \oplus [-2] \oplus [-4] \rightarrow \id_3 \oplus [-2] \oplus [-2] \oplus [-4] \oplus [-4] \oplus [-6].
        \end{equation*}

Finally, by Lemma \ref{main2}, the following two diagrams can be completed to two distinguished triangles
 \begin{equation*}
            \label{eqn-skein-relation-no-crossing}
            F_{3}^{12} R_{12}^{3}
            \rightarrow
            R_{111}^{12} F_{21}^{111} R_{111}^{21} F_{12}^{111}
            \rightarrow
            \id_{12}[-2], 
        \end{equation*}

\begin{equation*}
            \label{eqn-skein-relation21-no-crossing}
            F_{3}^{21} R_{21}^{3}
            \rightarrow
            R_{111}^{21} F_{12}^{111} R_{111}^{12} F_{21}^{111}
            \rightarrow
            \id_{21}[-2].
        \end{equation*}

By Theorem \ref{Theorem1} such assignment define a categorical action of $\gbrcat_3$ on $\cota\Fl_3(i)$.
\end{proof}

\begin{remark}
From Remark \ref{gbr2}, a categorical action of $\gbrcat_2$ needs only to satisfy relation (\ref{gbr2relation}).

Therefore, it is a case already covered by Theorem 4.5 of \cite{KT}. 
\end{remark}

\frenchspacing

\newcommand{\etalchar}[1]{$^{#1}$}

\end{document}